\newtheorem{theorem}{Theorem}[section]
\newtheorem{thmx}{Theorem}
\newtheorem{proposition}[theorem]{Proposition}
\newtheorem{lemma}[theorem]{Lemma}
\theoremstyle{remark}
\newtheorem*{remark}{Remark}
\renewcommand{\paragraph}{%
	  \@startsection{paragraph}{4}%
	  {\z@}{1ex \@plus 1ex \@minus .2ex}{-1em}%
	  {\normalfont\normalsize\bfseries}%
}
\renewcommand{\P}{\mathbb{P}}
\renewcommand{\O}{\mathcal{O}}
\newcommand{\OP}{\mathcal{O}_{\P^1}}
\newcommand{\OC}{\mathcal{O}_{C}}
\newcommand{\mE}{\mathcal{E}}
\newcommand{\mL}{\mathcal{L}}
\newcommand{\mF}{\mathcal{F}}
\newcommand{\mP}{\mathcal{P}}
\renewcommand{\S}{\mathcal{S}}
\renewcommand{\i}{\iota}
\newcommand{\BunO}{\Bun_{\O}} 
\newcommand{\Buninf}{\Bun_{\wi}}
\newcommand{\Bun}{\textnormal{Bun}}
\newcommand{\BunOm}{\BunO^{<}}
\newcommand{\BunOM}{\BunO^{>}}
\newcommand{\Buninfm}{\Buninf^{<}}
\newcommand{\BuninfM}{\Buninf^{>}}
\newcommand{\Bundemi}{\Bun_0^{\ubmu}(\P^1, \uD)}
\newcommand{\BunOCT}{\BunO^{\bmu}(C,T)}
\newcommand{\wi}{w_{\infty}}
\newcommand{\PP}{\P^1_z \times \P^1_w}
\newcommand{\PPsin}{\P^1 \times \P^1}
\newcommand{\Pb}{\P^2_{\bb}}
\renewcommand{\l}{\boldsymbol{l}}
\newcommand{\bmu}{\boldsymbol{\mu}}
\newcommand{\ubmu}{\underline{\bmu}}
\newcommand{\bnu}{\boldsymbol{\nu}}
\newcommand{\bl}{\boldsymbol{l}}
\newcommand{\bm}{\boldsymbol{m}}
\newcommand{\bb}{\boldsymbol{b}}
\newcommand{\bn}{\boldsymbol{n}}
\newcommand{\bp}{\boldsymbol{p}}
\newcommand{\T}{\mathcal{T}}
\newcommand{\uW}{\underline{W}}
\newcommand{\uD}{\underline{D}}
\newcommand{\Aut}{\textnormal{Aut}}
\newcommand{\AutS}{\textnormal{Aut}(\S)}
\newcommand{\PE}{\mathbb{P} E}
\newcommand{\Hom}{\text{Hom}}
\newcommand{\JacC}{\text{Jac}(C)}
\newcommand{\Forget}{\textnormal{Forget}}
\newcommand{\Tu}{\textnormal{Tu}}
\newcommand{\tPhi}{\tilde{\Phi}}
\newcommand{\tOmega}{\tilde{\Omega}}
\newcommand{\tsigmak}{\tilde{\sigma_k}}
\newcommand{\Elem}{\textnormal{Elm}}
\newcommand{\elem}{\textnormal{elm}}
\title{Geometry of the moduli of parabolic bundles \\ on elliptic curves}
\author{Néstor Fernández Vargas}
\address{IRMAR\\ Univ. Rennes 1\\ F-35000 Rennes\\ France}
\email{nestor.fernandez-vargas@univ-rennes1.fr}
\thanks{The author gratefully acknowledges support by the Centre Henri Lebesgue (ANR-11-LABX-0020-01).}
\subjclass[2010]{Primary 14H60; Secondary 14D20, 14H52, 14Q10}
\keywords{Parabolic vector bundle, parabolic structure, elliptic curve, moduli space, del Pezzo surface}
\begin{document}
\begin{abstract}
  The goal of this paper is the study of simple rank 2 parabolic vector bundles over a $2$-punctured elliptic curve $C$. We show that the moduli space of these bundles is a non-separated gluing of two charts isomorphic to $\PPsin$. We also showcase a special curve $\Gamma$ isomorphic to $C$ embedded in this space, and this way we prove a Torelli theorem. 
  This moduli space is related to the moduli space of semistable parabolic bundles over $\P^1$ via a modular map which turns out to be the 2:1 cover ramified in $\Gamma$. We recover the geometry of del Pezzo surfaces of degree 4 and we reconstruct all their automorphisms via elementary transformations of parabolic vector bundles.
\end{abstract}

\maketitle

\setcounter{tocdepth}{1}

\tableofcontents

\section{Introduction} 
  Let $X$ be a smooth projective curve over $\mathbb{C}$ and $T = t_1 + \cdots + t_n$ a reduced divisor on $X$. A rank 2 parabolic bundle $(E,\bp = (p_1,\ldots,p_n))$ over $(X, T)$ consists in a rank 2 vector bundle $E$ over $X$ together with 1-dimensional linear subspaces $p_i$ of the fiber  $E_{t_i}$ of $t_i$ for $i = 1,\ldots,n$.
  If $X$ is hyperelliptic, we can study the relation between parabolic bundles over $X$ and over its hyperelliptic quotient.
  The case $g=0$ is explored in \cite{lorayisom}, and the case $g=2$, building on \cite{bolognesikummer} and \cite{bolognesiweddle}, is developed in \cite{heuloray}. Here we investigate the elliptic case. 

  We are interested in three notions associated to these objects: indecomposability, simplicity and stability. A parabolic bundle is indecomposable if it cannot be written as a sum of two parabolic line bundles, and simple if it does not admit non-scalar automorphisms (preserving parabolic directions). 
  Let $\bmu = (\mu_1, \ldots, \mu_n) \in [0,1]^n$ be a vector of weights. By definition, a parabolic bundle $\mE = (E,\bp)$ is $\bmu$-semistable if, for every line subbundle $L \subset E$, we have
\begin{align*}
  \text{ind}_{\bmu}(\mE, L) :=  \text{deg} E - 2 \text{deg} L + \sum_{p_i \not \subset L} \mu_i - \sum_{p_i \subset L} \mu_i \geq 0.
\end{align*}    
  We say that the bundle $\mE$ is $\bmu$-stable when the inequality is strict for every $L$.
  The scalar $\text{ind}_{\bmu}(\mE, L)$ is the $\bmu$-parabolic degree of $(E, \bp)$.  

  These three notions coincide when the curve is the projective line (see \cite{loraysaito}), but they are different for curves of higher genus. In this paper, we study the case of an elliptic curve $C$ with a divisor $T = t_1 + t_2$ of degree 2. We show that, in this situation, $\bmu$-stability implies simplicity and simplicity implies indecomposability: 
\begin{align*}
  \bmu \text{-stable} \implies \text{simple} \implies  \text{indecomposable}. 
\end{align*}
  We also show that every simple bundle is $\bmu$-stable for some $\bmu$. Thus, the set of simple parabolic bundles is the union of the different sets of $\bmu$-stable bundles when $\bmu$ varies. In Section \ref{sec:moduli} we describe explicitely the moduli spaces of $\bmu$-semistable bundles. We prove the following Theorem:
\begin{thmx} \label{th:p1xp1}
  The moduli space $\BunOCT$ of $\bmu$-semistable parabolic bundles with fixed determinant is isomorphic to $\PPsin$. 
\end{thmx}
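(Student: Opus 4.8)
The plan is to explicitly parametrize the $\bmu$-semistable parabolic bundles with fixed determinant and to realize this parametrization as an isomorphism onto $\P^1 \times \P^1$. I would first fix a convenient determinant line bundle on $C$; since $C$ is elliptic, I can normalize the underlying rank $2$ bundle up to the relevant equivalence. The guiding principle is that on an elliptic curve the indecomposable rank $2$ bundles are classified (Atiyah), so after imposing the determinant condition there should be essentially one undecomposable underlying bundle $E_0$ (together with the decomposable locus that the stability condition will exclude or push to the boundary). The two parabolic points $t_1, t_2$ then contribute the two factors of $\P^1 \times \P^1$: each parabolic direction $p_i$ is a point of the projectivized fiber $\mathbb{P}(E_{t_i}) \cong \P^1$. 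So the heart of the argument is to show that the map $(E_0, (p_1,p_2)) \mapsto ([p_1],[p_2]) \in \P^1 \times \P^1$ descends to a well-defined morphism on the moduli stack/space and is in fact an isomorphism.

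\medskip

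First I would pin down the underlying bundle. Using the implications established earlier in the paper ($\bmu$-stable $\Rightarrow$ simple $\Rightarrow$ indecomposable), every $\bmu$-stable parabolic bundle has indecomposable underlying bundle, or at worst a controlled decomposable one living on a lower-dimensional locus. By Atiyah's classification with the fixed determinant constraint, the indecomposable $E_0$ is unique up to isomorphism (for a generic/suitable choice of determinant, this is the Atiyah bundle $F_2$ twisted appropriately). I would then compute $\Aut(E_0)$: for the indecomposable bundle this automorphism group acts on the pair of fibers $E_{0,t_1} \times E_{0,t_2}$, and understanding this action is what makes the quotient come out to be $\P^1 \times \P^1$ rather than something smaller. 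Concretely, I expect $\Aut(E_0)$ modulo scalars to act on each $\mathbb{P}(E_{0,t_i}) \cong \P^1$ in a way that is either trivial or a single affine/translation action, so that after taking the quotient the two parabolic directions give independent $\P^1$-coordinates.

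\medskip

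Next I would check stability translates cleanly into open conditions. For each destabilizing line subbundle $L \subset E_0$, the inequality $\text{ind}_{\bmu}(\mE,L) \geq 0$ becomes a condition on whether $p_1, p_2$ lie on the finitely many ``bad'' directions coming from sub-line-bundles of $E_0$ of maximal degree. Since $E_0$ is fixed, there are only finitely many such $L$ meeting the fibers in prescribed points, so the unstable locus is a finite union of points or lines in $\P^1 \times \P^1$; semistability then either fills these in (via S-equivalence identifying strictly semistable bundles) or the weight vector $\bmu$ is chosen so that the whole $\P^1 \times \P^1$ is covered. I would then exhibit the inverse morphism: given a point $([p_1],[p_2])$ reconstruct the parabolic bundle $(E_0,(p_1,p_2))$, verify it is semistable, and check the two constructions are mutually inverse, giving the isomorphism.

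\medskip

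The main obstacle I anticipate is the boundary/decomposable locus and the role of S-equivalence. The clean statement ``isomorphic to $\P^1 \times \P^1$'' hides the fact that the moduli space is a GIT or good-moduli quotient, so strictly semistable points must be identified correctly and I must confirm that the decomposable bundles (sums of parabolic line bundles) either fall into the same S-equivalence classes as semistable indecomposable ones or correspond exactly to the loci that the automorphism-quotient glues in. In other words, the delicate part is not the generic fiber count but showing that the candidate isomorphism extends over the special loci without the moduli space acquiring extra singularities or failing to be separated in this chart — the abstract says the full moduli of simple bundles is non-separated, so here I must be careful that fixing $\bmu$ and passing to semistable objects does produce the separated, smooth $\P^1 \times \P^1$ claimed, which I expect requires a precise analysis of the strictly semistable walls.
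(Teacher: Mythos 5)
Your plan has a genuine gap at its foundation: for the trivial determinant that Theorem~\ref{th:p1xp1} concerns, there is no single underlying bundle whose two projectivized fibers supply the two $\P^1$ factors. By Atiyah's classification as recalled in Theorem~\ref{th:E1}(4), there are \emph{four} indecomposable bundles of trivial determinant, namely $E_0 \otimes L_k$ for the four torsion bundles, and by part (6) each carries a one-dimensional automorphism group acting transitively and freely on the relevant parabolic configurations -- so the parabolic structures on a fixed $E_0\otimes L_k$ contribute only a curve to the moduli space, not a surface. (This also shows your quotient heuristic cannot work: dividing $\mathbb{P}(E_{t_1})\times\mathbb{P}(E_{t_2})$ by a one-dimensional automorphism group does not return $\P^1\times\P^1$.) Worse, the decomposable bundles are neither excluded nor pushed to the boundary in degree $0$: by Proposition~\ref{prop:clasifpar} the \emph{generic} $\bmu$-stable point of $\BunOCT$ has underlying bundle $L\oplus L^{-1}$ with $L$ varying over $\JacC$, so the decomposable locus is the main stratum and the underlying bundle itself moves in a one-parameter family. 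Your map $(E,(p_1,p_2))\mapsto([p_1],[p_2])$ therefore does not land in a fixed $\P^1\times\P^1$.

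The paper's way around this is the step your proposal is missing: first treat the \emph{odd} determinant $\O(\wi)$, where Atiyah gives a unique indecomposable bundle $E_1$ with only scalar automorphisms and a unique embedding of every degree-$0$ line bundle. There your strategy works verbatim -- the map $M(m_1,m_2)=[E_1,\bm]$ on $E_1|_{t_1}\times E_1|_{t_2}\cong\P^1\times\P^1$ is the isomorphism (Theorem~\ref{th:p1xp1wall}), with the decomposable strictly semistable bundles absorbed into $S$-equivalence classes that already contain an $E_1$-representative (Theorem~\ref{th:strict}), and the strictly semistable locus is the $(2,2)$-curve $\Gamma$ cut out by the condition that both directions lie on a common degree-$0$ subbundle (Proposition~\ref{prop:2sec}). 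The trivial-determinant statement is then deduced by the elementary transformation $\Elem_{t_2}^+$, which is an isomorphism of moduli spaces. If you want to work directly in degree $0$ you must replace your fiberwise coordinates by the coordinate system the paper builds from $\Tu\circ\Forget$ and $\phi_T$, in which one $\P^1$ factor records which $L\oplus L^{-1}$ occurs rather than a parabolic direction.
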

  For $\mu_1 \not = \mu_2$, every $\bmu$-semistable bundle in this moduli space is $\bmu$-stable. For $\mu_1 = \mu_2$, we have a strictly $\bmu$-semistable locus $\Gamma \subset \BunOCT$, which is a bidegree $(2,2)$ elliptic curve in $\PPsin$. We show that this locus contains the initial data $(C,T)$, equivalent to the data of a degree 4 reduced divisor $\uD$ and a point $t \not \in \uD$ in $\P^1$. This is proved in the following Theorem of Torelli type, where we consider all curves smooth:
\begin{thmx} \label{thx:Torelli}
  The curve $\Gamma$ is isomorphic to $C$, and we can recover the divisor $T$ from the embedding $\Gamma \subset \PPsin$. Let $\mathbf{G} = \text{PGL} (2) \times \text{PGL} (2)$. There exist one-to-one correspondences between the following sets:
\begin{align*}
\left\{\parbox[p]{6em}{\begin{center} $(2,2)$-curves   $\Gamma \subset \PPsin$\end{center}}\right\} \Big/ \mathbf{G} \
\xleftrightarrow{\ 1:1 \ }
\left\{\parbox[p]{7em}{\begin{center} $2$-punctured elliptic curves  $(C, T)$ \end{center}}\right\} \Big/\sim & \
\xleftrightarrow{\ 1:1\ }
\left\{\parbox[p]{7em}{\begin{center} $4+1$-punctured rational curves  $(\P^1, \uD + t)$\end{center}}\right\} \Big/\sim.
\end{align*}
\end{thmx}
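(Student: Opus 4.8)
The plan is to use the elliptic curve $C$ as the central object and to construct the two bijections separately, each at the level of isomorphism classes, checking at the end that the relevant symmetry groups correspond.

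For the right-hand bijection I would argue classically. Given $(C,T)$ with $T=t_1+t_2$ reduced, Riemann--Roch gives $h^0(\O_C(T))=2$ (since $\deg T = 2 > 0 = \deg \omega_C$), so $|T|$ is a $g^1_2$ and defines a degree $2$ morphism $\phi\colon C\to\P^1$. By Riemann--Hurwitz $\phi$ has exactly $4$ branch points, forming a reduced divisor $\uD\subset\P^1$; since $t_1+t_2$ is a fibre of $\phi$ we have $\phi(t_1)=\phi(t_2)=:t$, and $t\notin\uD$ because that fibre is reduced. Conversely, a reduced $\uD$ of degree $4$ admits a unique double cover $C\to\P^1$ branched along it --- unique because $\O_{\P^1}(\uD)=\O_{\P^1}(4)$ has a unique square root $\O_{\P^1}(2)$ --- and $C$ has genus $1$ by Riemann--Hurwitz; the fibre over $t$ is then $T$. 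These constructions are mutually inverse, and an isomorphism of pairs $(C,T)$ corresponds exactly to a Möbius transformation preserving $(\uD,t)$, giving the bijection with $(\P^1,\uD+t)/\!\sim$.

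For the left-hand bijection I would first compute, by adjunction on $\PPsin$ (where $K=\O(-2,-2)$ and $\Gamma\in|\O(2,2)|$), that $\Gamma$ has arithmetic genus $1$; as $\Gamma$ is smooth this identifies it with an elliptic curve $C$. The two projections $\pi_1,\pi_2\colon\Gamma\to\P^1$ are degree $2$ morphisms, hence two $g^1_2$'s, i.e. two degree $2$ line bundles $L_1,L_2$ on $C$; conversely $\Gamma$ is exactly the image of $(\pi_1,\pi_2)$, and since $L_1\otimes L_2$ has degree $4$ it is very ample and realises $\Gamma$ as an elliptic quartic on the quadric $\PPsin$. Because $\mathbf G=\text{PGL}(2)\times\text{PGL}(2)$ reparametrises each factor independently, I expect $\{(2,2)\text{-curves}\}/\mathbf G$ to be identified with isomorphism classes of triples $(C;L_1,L_2)$ with $L_1\not\cong L_2$.

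It then remains to recover $T$ from $(L_1,L_2)$, and this is the crux. One ruling should realise the linear system $|T|$, so that $\O_C(T)\cong L_1$ and the branch locus of $\pi_1$ is the divisor $\uD$; the content of ``recovering $T$'' is then to single out the specific fibre $\phi^{-1}(t)=T$ inside $|T|\cong\P^1$, equivalently the point $t$, using the second ruling. I do not expect this to follow from the bare projective geometry of $\Gamma$ (the four fixed points of each of the two involutions, or the images of one set of ramification points under the other projection, do not a priori collapse to a single point); rather I would invoke the explicit modular isomorphism $\Gamma\cong C$ established in the earlier section together with the interpretation of the two rulings as the parabolic directions at $t_1$ and $t_2$, which marks $t_1,t_2$ as distinguished points of $\Gamma$ and hence pins down $t=\pi_1(t_1)=\pi_1(t_2)$. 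The main obstacle is precisely this intrinsic characterisation of the marked divisor; a secondary point to reconcile is that $\mathbf G$ does not interchange the two rulings while $T=t_1+t_2$ is unordered, so I must check that the swap $t_1\leftrightarrow t_2$ is accounted for by an isomorphism of the triple $(C;L_1,L_2)$. Granting this, composing the two bijections --- and verifying that isomorphisms of pointed curves, Möbius transformations, and $\mathbf G$-orbits correspond throughout --- yields the stated one-to-one correspondences.
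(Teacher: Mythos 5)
Your two outer constructions are sound and agree with the paper's: the right-hand bijection via the degree-$2$ map defined by $|T|$ and its branch divisor is exactly the paper's second correspondence, and viewing a smooth $(2,2)$-curve as an elliptic curve carrying two $g^1_2$'s is the correct start for the first. The genuine gap is precisely at what you call the crux: you never recover $t$, and the construction you dismiss is the one that works. The paper's Proposition \ref{prop:tang} shows that if the four branch points of the first projection $\pi_1|_\Gamma$ are normalised to $\{0,1,\lambda,\infty\}$, then the images under $\pi_2$ of the four ramification points of $\pi_1|_\Gamma$ (equivalently, the second coordinates of the tangency points of the four vertical tangent lines) are the four points $\beta_k(t)$, where the $\beta_k$ run over the Klein four-group of M\"obius transformations preserving $\{0,1,\lambda,\infty\}$. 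You are right that these four points do not collapse to a single point, but they form a single orbit under the stabiliser of the branch divisor $\uD$, and since the target of the correspondence is the set of pairs $(\P^1,\uD+t)$ \emph{up to isomorphism}, a well-defined orbit is exactly what is needed. This is the intrinsic, purely projective recovery of $t$ from the embedding $\Gamma\subset\PPsin$, and it is the one nontrivial computation in the proof.

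Your proposed substitute --- invoking the modular isomorphism $\Gamma\cong C$ and the interpretation of the two rulings via parabolic directions --- cannot define the left-hand map, because it presupposes that $\Gamma$ is presented as the strictly semistable locus of $\BunOCT$ for an already-chosen pair $(C,T)$; an abstract $(2,2)$-curve carries no such presentation, so using the moduli description here is circular. The moduli-theoretic construction is legitimately used in the \emph{other} direction (it furnishes the inverse map, as the paper says), but the forward direction requires the tangency-point computation above, which your argument leaves open. Your secondary worry about the swap of the two rulings is comparatively harmless: the curves arising from the construction are invariant under $(z,w)\mapsto(w,z)$ because $\phi_T$ preserves $\Gamma$, so the unordered nature of $T$ is compatible with the $\mathbf{G}$-action.
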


  The space of weights $\bmu$ is divided in two chambers $C_<$ and $C_>$ with associated moduli spaces $\BunOm(C,T)$ and $\BunOM(C,T)$. Inside each chamber, the points in $\BunOCT$ represent the same bundles, regardless of the weight. Along the curve $\Gamma$ occurs a \emph{wall-crossing phenomena}: a point in $\Gamma$ represents a different bundle for $\bmu$ in $C_<$ or $C_>$. 
  In contrast, the bundles represented in the spaces $\BunOm(C,T) \setminus \Gamma$ and $\BunOM(C,T) \setminus \Gamma$ are the same. Identifying identical bundles, we construct the non-separated scheme 
\begin{align*}
  \BunO (C,T)= {X_< \coprod X_>} \Big/ \sim
\end{align*}
which parametrizes the set of simple parabolic bundles on $(C,T)$.

  Let $\uD$ be a reduced divisor on $\P^1$ of degree 5. In ~\cite{loraysaito}, the authors construct the full coarse moduli space $\Bun_{-1}(\P^1, \uD)$ of rank 2 indecomposable parabolic vector bundles over $(\P^1, \uD)$ as a patching of projective charts. These charts are moduli spaces $\Bun^{\bnu}_{-1}(\P^1, \uD)$ of $\bnu$-semistable bundles for specific weights $\bnu$. Thus, when we move weights from one chamber to another, we change between the charts.

  One of these charts is isomorphic to the projective plane $\P^2$. We have five points in this chart, namely four points $D_i$ for $i = 0, 1, \lambda, t$; and a special point $D_t$. Moving weights, we change to the chart $\S$, which is the blow-up of $\P^2$ in these five points. This chart is by definition a del Pezzo surface of degree 4 (recall that a del Pezzo surface of degree $d$ is a blow-up of the plane in $9-d$ points). 

  We construct a 2:1 modular map $$\Phi: \S \cong \Bundemi \to \BunOCT \cong \PPsin.$$ This map is the 2:1 cover of $\PPsin$ ramified over the curve $\Gamma$.
  We give a modular interpretation of the geometry of this covering. We start by studying the automorphism $\tau$ of $\S$ induced by $\Phi$. This map is the lift of a de Jonquières automorphism of the projective plane. More precisely, it is the lift of a birational transformation of $\P^2$ preserving the pencil of lines passing through $D_t$ and the pencil of conics passing through the four points $D_i$. Then, we show that the group $\Aut(\PPsin,\Gamma)$ of automorphisms of $\PPsin$ preserving $\Gamma$ is generated by three involutions $\tilde{\sigma_0}$, $\tilde{\sigma_1}$ and $\phi_T$ commuting pairwise. These involutions are modular, in the sense that correspond to natural parabolic bundle transformations. They lift to automorphisms $\sigma_0$, $\sigma_1$ and $\psi_T$ of $\S$.

These involutions allow us to reinterpret the group $\AutS$ of automorphisms of $\S$ in modular terms:
\begin{thmx} \label{th:autS}
  The group $\AutS$ is generated by the involutions $\sigma_k$, $\psi_T$ and $\tau$.
\end{thmx}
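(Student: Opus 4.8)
The plan is to derive Theorem~\ref{th:autS} from the double cover $\Phi\colon\S\to\PPsin$ together with the description of $\Aut(\PPsin,\Gamma)$ obtained above. Since $\Phi$ is the $2{:}1$ cover branched along $\Gamma$, its deck transformation is precisely $\tau$ and $\S/\langle\tau\rangle=\PPsin$. Any $g\in\Aut(\S)$ commuting with $\tau$ descends to an automorphism $\bar g$ of $\PPsin$; as such a $g$ preserves the ramification curve $R=\mathrm{Fix}(\tau)$, its descent $\bar g$ preserves the branch curve $\Gamma$. This yields a homomorphism $\rho\colon\Aut(\S)\to\Aut(\PPsin,\Gamma)$ whose kernel is the group of automorphisms over $\PPsin$, i.e.\ the deck group $\langle\tau\rangle$. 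The generators $\tilde\sigma_0,\tilde\sigma_1,\phi_T$ of $\Aut(\PPsin,\Gamma)$ lift by construction to $\sigma_0,\sigma_1,\psi_T$, so $\rho$ is surjective and we obtain
\begin{equation*}
1\longrightarrow\langle\tau\rangle\longrightarrow\Aut(\S)\stackrel{\rho}{\longrightarrow}\Aut(\PPsin,\Gamma)\longrightarrow 1.
\end{equation*}
Reading off the generators of the middle term as the preimages of $\tilde\sigma_0,\tilde\sigma_1,\phi_T$ together with the kernel gives exactly $\Aut(\S)=\langle\sigma_0,\sigma_1,\psi_T,\tau\rangle$, which is the assertion.

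The crux is the well-definedness of $\rho$: one must show that \emph{every} automorphism of $\S$ commutes with $\tau$, equivalently that $\Aut(\S)$ preserves the covering $\Phi$. I would establish this by characterising $\Phi$ intrinsically. In the anticanonical model $\S\hookrightarrow\P^4$ the surface is the base locus of a pencil of quadrics, and the relation $-K_\S=\Phi^*\O(1,1)$ identifies $\Phi$ with the projection from the vertex of one of the five singular quadrics of the pencil, the fixed curve $R\in|{-}K_\S|$ being the associated anticanonical curve of genus $1$. The group $\Aut(\S)$ acts on the pencil $\cong\P^1$ preserving its five degenerate members, so the issue is to show that the member producing $\Phi$ is fixed; equivalently that $g\tau g^{-1}$, whose fixed locus is the anticanonical curve $g(R)$, is again $\tau$. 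I expect this to be the main obstacle, to be settled either by proving that the induced action on the pencil is trivial for the surfaces in our family, or by giving $R$ a modular characterisation singling it out among the five special anticanonical curves.

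As a cross-check and an alternative route, one can argue by orders. The four involutions pairwise commute: $\tau$ commutes with each lift because $g\tau g^{-1}$ is again an automorphism over $\PPsin$, hence equals $\tau$; and $\sigma_0,\sigma_1,\psi_T$ commute because their images $\tilde\sigma_0,\tilde\sigma_1,\phi_T$ do, any discrepancy lying in $\ker\rho=\langle\tau\rangle$. Hence $\langle\sigma_0,\sigma_1,\psi_T,\tau\rangle$ is an elementary abelian $2$-group, of order $16$ once the four involutions are seen to be independent, for instance through their distinct actions on $\mathrm{Pic}(\S)$ or on the sixteen lines of $\S$. On the other hand $\Aut(\S)$ is finite and faithfully represented in the Weyl group $W(D_5)$ via $\mathrm{Pic}(\S)$, and for the generic degree $4$ del Pezzo surface this image is the normal elementary abelian subgroup $(\mathbb{Z}/2)^4$ of order $16$. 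Matching $|\Aut(\S)|=16=|\langle\sigma_0,\sigma_1,\psi_T,\tau\rangle|$ forces equality, and also pins down $|\Aut(\PPsin,\Gamma)|=8$, consistent with its being generated by three commuting involutions.
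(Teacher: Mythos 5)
Your second, ``argument by orders'' route is essentially the proof in the paper: one quotes $\AutS\cong(\mathbb{Z}/2\mathbb{Z})^4$ (hence $|\AutS|=16$) from Dolgachev, observes that $\sigma_0,\sigma_1,\psi_T$ generate a subgroup surjecting onto $\Aut(\PP,\Gamma)\cong(\mathbb{Z}/2\mathbb{Z})^3$ of Proposition \ref{prop:autPP}, and adjoins the nontrivial deck transformation $\tau$, which descends to the identity, to obtain a subgroup of order $16$ and conclude by cardinality. The independence you leave to a computation is in fact immediate from this setup: $\sigma_0,\sigma_1,\psi_T$ have independent images in $\Aut(\PP,\Gamma)$ while $\tau$ is a nontrivial element of the kernel of the descent, so no appeal to $\mathrm{Pic}(\S)$ or to the sixteen $(-1)$-curves is needed.

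Your primary route, the exact sequence $1\to\langle\tau\rangle\to\AutS\to\Aut(\PP,\Gamma)\to 1$, is genuinely different and would be more self-contained, since it would not presuppose $|\AutS|=16$; but it hinges on the step you yourself flag and do not close, namely that \emph{every} $g\in\AutS$ commutes with $\tau$ (equivalently preserves the covering $\Phi$). This is a real gap: without it, $\rho$ is only defined on the subgroup you already control, and the exact sequence gives no upper bound on $\AutS$. The available ways to fill it are either to invoke the commutativity of $\AutS\cong(\mathbb{Z}/2\mathbb{Z})^4$ --- which collapses your argument back into the cardinality one --- or to show that $\AutS$ fixes the relevant singular member of the pencil of quadrics (equivalently the anticanonical curve $R=\mathrm{Fix}(\tau)$, or the class $\Phi^*\O(1,0)$ in $\mathrm{Pic}(\S)$), which is additional work that the paper deliberately avoids. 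As written, only your fallback actually completes the proof, and it coincides with the paper's.
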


  These automorphisms are completely characterized by their action on a set $\Omega$, which is the union of the following geometric elements in $\P^2$: the five points $D_i$, the ten lines $\Pi_{ij}$ passing by $D_i$ and $D_j$ and the conic $\Pi$ passing by all the points $D_i$.
  We compute explicitely this action and the coordinate expression for each involution.

\paragraph{Acknowledgements} I thank my PhD advisors Frank Loray and Michele Bolognesi for their constant support and guidance. 
I am also very grateful to Thiago Fassarella for many interesting and fruitful conversations, and to Jérémy Blanc for valuable discussions on the subject.

\section{GIT moduli spaces and elementary transformations} \label{sec:par}
  In this Section, we recall the definitions of the main objects and maps that appear in this paper. These are moduli spaces of rank 2 parabolic vector bundles over $(X,T)$ and elementary transformations of parabolic vector bundles.

\subsection{The GIT moduli space of parabolic vector bundles}

  The coarse moduli space $\Bun^{\bmu}_L(X,T)$ of $\bmu$-semistable parabolic bundles of determinant $L$ over $(X,T)$ is a separated projective variety. The subset of $\bmu$-stable points is Zariski open and smooth, and the boundary is the $\bmu$-strictly semistable locus, containing the singular part (see \cite{mehtaseshadri}, \cite{bodenhu}, \cite{maruyamayokogawa}, \cite{seshadri}, \cite{kumar} and \cite{biswashollakumar}).

  Let $(E, \bp)$ be a rank 2 parabolic bundle over $(X,T)$ and $T' \subset T$ a subdivisor. A parabolic line subbundle of $(E, \bp)$ is a parabolic line bundle $(L, \bp')$ over $(X,T')$ such that $L \subset E$ and $L$ does not pass through the parabolic directions supported by $T'' = T - T'$
  (here, $\bp'$ is the unique parabolic structure over $(X, T')$).
  The quotient parabolic bundle is the parabolic line bundle $(E/L, \bp'')$ over $(X, T'')$. 
  The slopes of $\mL = (L,\bp')$ and $\mE = (E, \bp)$ are respectively the quantities
\begin{align*}
  \text{slope}(\mL) := \text{deg } L + \sum_{t_i \in T'} \mu_i \quad , \quad
  \text{slope}(\mE) := \frac{1}{2} \left(\text{deg } E + \sum_{t_i \in T} \mu_i \right)
\end{align*}
  With this notation, we have that $$\text{ind}_{\bmu}(\mE, L) = 2 \ \text{slope}(\mE) - 2 \ \text{slope}(\mL).$$ In particular,  $\mE$ is $\bmu$-semistable if, and only if, $\text{slope}(\mE) \geq \text{slope}(\mL)$ for every parabolic line subbundle $\mL$, the strict inequality corresponding to $\bmu$-stability.

  Let $\mE$ be $\bmu$-semistable of rank 2. A Jordan-Hölder filtration is a sequence
\begin{align*}
  0 \subset  \mL \subset \mE
\end{align*}
such that $\text{slope}(\mL) = \text{slope}(\mE)$. This sequence is not canonical, but the graded bundle
$  \text{gr} \mE = \mL \oplus \left( \mE / \mL \right)$
is canonical. The moduli space $\Bun_L^{\bmu}(X,T)$ is constructed by identifying bundles $\mE$ and $\mE'$ if $\text{gr} \mE \cong \text{gr} \mE'$ as holomorphic parabolic bundles. We say then that $\mE$ and $\mE'$ are in the same $s$-equivalence class (see \cite{mehtaseshadri} and \cite{bodenhu}).

  From now on, we will write $\Bun^{\bmu}_{B}(X,T)$ instead of $\Bun^{\bmu}_{\O(B)}(X,T)$ for a divisor $B$. 

\subsection{Elementary transformations} \label{subsec:elem}

  We recall the fundamental properties of elementary transformations of parabolic vector bundles. For a more complete reference, see \cite{machu}, \cite{heuloray}, \cite{lorayisom} or \cite{esnault}. 

  Let $\P (E, \bp)$ be the projectivization of the parabolic bundle $(E, \bp)$. It consist of the projective bundle $\PE$ together with a parabolic point $p_i$ in the fiber $F$ of each $t_i$. The \emph{elementary transformation} $\elem_{t_i}$ of $\P (E, \bp)$ is a birational transformation of the total space $\text{tot}(\PE)$: it is the blow-up of the point $p_i \in \PE$ followed by the contraction of the total transform $\hat{F}$ of the fibre $F$. The point resulting from this contraction gives the new parabolic direction $p_i'$. 
  
  In the vectorial setting $(E, \bp)$, we have two transformations which coincide projectively with the above definition: the positive elementary transformation $\elem^+_{t_i}$ and the negative elementary transformation $\elem^-_{t_i}$. We recall their properties in the following Proposition:

\begin{proposition}
  Let $(E, \bp)$ be a parabolic bundle over $(X, T)$. Then, the parabolic bundle $(E', \bp') = \elem^+_{t_i}(E, \bp)$ satisfies the following properties:
\begin{itemize}
  \item $\det (E', \bp') = \det (E, \bp) \otimes \O(t_i)$.
  \item If $L \subset E$ is a line subbundle passing by $p_i$, its image by $\elem^+_{t_i}$ is a subbundle $L' \cong L \otimes \O(t_i)$ of $\elem^+_{t_i}(E)$ not passing by $p_i'$. 
  \item If $L \subset E$ is a line subbundle not passing by $p_i$, its image by $\elem^+_{t_i}$ is a subbundle $L' \cong L$ of $\elem^+_{t_i}(E)$ passing by $p_i'$. 
\end{itemize}
  For the negative elementary transformation, the parabolic bundle $(E'', \bp'') = \elem^-_{t_i}(E, \bp)$ satisfies:
\begin{itemize}
  \item $\det (E'', \bp'') = \det (E, \bp) \otimes \O(-t_i)$.
  \item If $L \subset E$ is a line subbundle passing by $p_i$, its image by $\elem^-_{t_i}$ is a subbundle $L' \cong L $ of $\elem^+_{t_i}(E)$ not passing by $p_i''$. 
  \item If $L \subset E$ is a line subbundle not passing by $p_i$, its image by $\elem^-_{t_i}$ is a subbundle $L' \cong L \otimes (-t_i)$ of $\elem^-_{t_i}(E)$ passing by $p_i''$. 
\end{itemize}
\end{proposition}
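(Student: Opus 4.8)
The plan is to verify each of the six bulleted claims directly from the definition of the elementary transformation as a blow-up followed by a contraction, tracking two things in parallel: the projective picture (which subspace becomes the new parabolic direction) and the vectorial refinement (which twist by $\O(\pm t_i)$ records the degree bookkeeping). The key observation is that over $X \setminus \{t_i\}$ all four transformations $\elem^{\pm}_{t_i}$ are isomorphisms, so everything reduces to a local computation in a trivializing neighborhood of $t_i$.

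First I would fix a local coordinate $x$ centered at $t_i$ and a local frame $(e_1, e_2)$ of $E$ such that the parabolic direction $p_i$ is spanned by $e_1$ at $x=0$. In these terms the positive transformation $\elem^+_{t_i}$ replaces the local sheaf of sections by the subsheaf generated by $(e_1, x e_2)$, while $\elem^-_{t_i}$ enlarges it to the sheaf generated by $(x^{-1} e_1, e_2)$; this is the standard Hecke-modification description matching the blow-up/contraction recipe. From this the determinant claims are immediate: $\elem^+_{t_i}$ multiplies $\det E$ locally by $x$, giving $\det(E') = \det(E) \otimes \O(t_i)$, and dually $\elem^-_{t_i}$ divides by $x$, giving $\det(E'') = \det(E) \otimes \O(-t_i)$.

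Next I would treat a line subbundle $L \subset E$ by splitting into the two cases according to whether $L$ passes through $p_i$. For $\elem^+_{t_i}$: if $L$ passes through $p_i$, then near $t_i$ its sections are generated by $e_1$ plus higher-order terms, so $L$ already lies in the modified subsheaf, and its saturation in $E'$ picks up an extra vanishing order, yielding $L' \cong L \otimes \O(t_i)$; moreover the new direction $p_i'$ coming from the contracted fibre is precisely the image of $e_2$, so $L'$ does \emph{not} pass through $p_i'$. If instead $L$ is transverse to $p_i$ at $t_i$, its local generator involves $e_2$, so saturating it in the smaller sheaf $\langle e_1, x e_2 \rangle$ costs nothing in degree, $L' \cong L$, and the generator now points in the $p_i'$ direction, so $L'$ passes through $p_i'$. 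The analysis for $\elem^-_{t_i}$ is dual: passing through $p_i$ gives $L' \cong L$ and transverse gives $L' \cong L \otimes \O(-t_i)$, with the incidence with $p_i''$ reversed exactly as stated. The alternation of the twist between the two cases is the expected compatibility, since projectively $\elem^+_{t_i}$ and $\elem^-_{t_i}$ agree and differ only by the global twist $\O(t_i)$.

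The main obstacle I anticipate is purely bookkeeping: making the saturation argument precise, i.e.\ computing the correct vanishing order of the image line bundle inside the modified sheaf and confirming that the contracted fibre produces the claimed new direction rather than its complement. This is where sign and twist errors creep in, so I would double-check it against the determinant identities (the three bullets must be mutually consistent, since $\det(E') = L' \otimes (E'/L')$ must recover the total twist by $\O(t_i)$) and against the fact that $\elem^-_{t_i} \circ \elem^+_{t_i}$ is the identity on projectivized bundles, which forces the incidence relations in the two lists to be exact inverses of one another.
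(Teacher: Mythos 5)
The paper does not actually prove this Proposition; it recalls it as a standard property of elementary transformations and points to the references, so your local (Hecke-modification) computation is a legitimate way to supply the missing argument, and it is the standard one. However, as written your proof contains a concrete error: you have interchanged the local models of $\elem^+_{t_i}$ and $\elem^-_{t_i}$. The subsheaf $\langle e_1, x e_2\rangle$ (sections whose value at $t_i$ lies in $p_i=\langle e_1(0)\rangle$) has determinant $x\cdot\det E=\det E\otimes\O(-t_i)$, so it is the \emph{negative} transformation; the \emph{positive} one must be the upper modification $\langle x^{-1}e_1, e_2\rangle$, i.e.\ the extension $0\to E\to E'\to \mathbb{C}_{t_i}\to 0$ allowing a simple pole in the direction $p_i$, which is what produces $\det E'=\det E\otimes\O(t_i)$. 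Your sentence ``multiplies $\det E$ locally by $x$, giving $\det(E')=\det(E)\otimes\O(t_i)$'' conflates multiplication by $x$ (which cuts out $\O(-t_i)$) with tensoring by $\O(t_i)$ (local generator $x^{-1}$).

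The swap propagates into the subbundle analysis: with your model $\langle e_1,xe_2\rangle$, a subbundle through $p_i$, generated by $e_1+xb(x)e_2$, is \emph{already saturated} (its coordinates in the frame $(e_1,xe_2)$ are $(1,b)$), so it acquires no twist, while a transverse subbundle generated by $e_2+a(x)e_1$ must be multiplied by $x$ to land in the subsheaf and therefore \emph{loses} a degree --- exactly the behaviour the Proposition attributes to $\elem^-_{t_i}$, not $\elem^+_{t_i}$. With the corrected model $E'=\langle f_1,f_2\rangle$, $f_1=x^{-1}e_1$, $f_2=e_2$, everything comes out right: a subbundle through $p_i$ has generator $x(f_1+b f_2)$, whose saturation $f_1+bf_2$ gives $L'\cong L\otimes\O(t_i)$ with direction $f_1(0)+b(0)f_2(0)$ at $t_i$; a transverse subbundle has generator $f_2+xaf_1$, so $L'\cong L$ with direction $f_2(0)$; and since all transverse subbundles hit $\langle f_2(0)\rangle$, that point is the image of the contracted fibre, i.e.\ $p_i'=\langle f_2(0)\rangle$, which the saturations of subbundles through $p_i$ avoid. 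Your final consistency checks ($\det E'=L'\otimes(E'/L')$ and $\elem^-_{t_i}\circ\elem^+_{t_i}=\mathrm{id}$ projectively) are exactly the right safeguards and would have caught this; with the two local models exchanged, the rest of your argument goes through verbatim.
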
  

  From this Proposition, we obtain
\begin{align*}
	(E,\bp) \text{ is } (\bmu)\text{-semistable} \implies \elem^+_{t_i}(E,\bp) \text{ is } (\hat{\bmu}_i)\text{-semistable}.
\end{align*} 
where $\bmu = (\mu_1,  \ldots , \mu_n)$ and $\hat{\bmu}_i = (\mu_1,  \ldots, 1 - \mu_i , \ldots, \mu_n)$.
  Therefore, an elementary transformation is a class of isomorphisms, well-defined between the corresponding moduli spaces:
\begin{equation*}
	\Bun_L^{\bmu}(X,T) \xrightarrow{\elem_{t_i}^+} \Bun_{L(t_i)}^{{\hat{\bmu}}_1}(X,T).
\end{equation*}
  The inverse map is $\elem^-_{t_i}$, and the composition $\elem^+_{t_i} \circ \elem^+_{t_i}$ is the tensor product by $\O(t_i)$.
  If $t_i \not = t_j$, $\elem^*_{t_i} \circ \elem^*_{t_j} = \elem^*_{t_j} \circ \elem^*_{t_i}$.
  We denote by  $\elem^*_{T'}$ the composition $\elem^*_{T'} := \elem^*_{t_{i_1}} \circ \cdots \circ \elem^*_{t_{i_m}},$ 
  where $T' = t_{i_1} + \cdots + t_{i_m} \subset T$ is a subdivisor. 

  Tensoring  by a line bundle $M$ gives the \emph{twist automorphism}
\begin{equation*}
  \Bun^{\bmu}_L(X,T) \xrightarrow{\otimes M} \Bun^{\bmu}_{L \otimes M^2} (X,T).
\end{equation*}
  Since line bundles of even degree are always of the form $M^2$ for a suitable $M$, the moduli spaces $\Bun^{\bmu}_L(X,T)$ and $\Bun^{\bmu}_{L'}(X,T)$ are isomorphic if the degrees of $L$ and $L'$ have the same parity. Thus, it is enough to study the cases $L = \O$ and $L = \O(\wi)$, of moduli spaces $\BunO^{\bmu}(X,T)$ and $\Buninf^{\bmu}(X,T)$ respectively.

\section{Rank 2 indecomposable vector bundles over elliptic curves} \label{sec:rank}

  Let $X = C$ be the elliptic curve defined by the equation $y^2 = x (x - 1) (x - \lambda)$ and $\pi : C \to \P ^1$ be the elliptic cover $(x,y) \mapsto x$. Let $w_i$ be the four preimages of the points $i = 0, 1, \lambda, \infty$ via $\pi$.
  
\subsection{The 2-torsion group} \label{sec:torsion}
  Every degree 0 line bundle on $C$ is of the form $L = \O(p - \wi)$. Among these, we have the four torsion bundles $$L_k = \O(w_k - \wi)$$ corresponding to the Weierstrass points $w_k$ on $C$, for $k \in \Delta = \{0,1,\lambda,\infty\}$. The torsion bundles satisfy $L_k^2 = \O$. These bundles constitute the \emph{2-torsion group} $\mathcal{T}$ of $\JacC$.
 
 Let $q \in C$ be a point. We define an involution $\i_q: C \to C$ as follows: for every $p \in C$, $\i_q (p)$ is the unique point in $C$ satisfying the following equation of linear equivalence:
\begin{equation*}
p + q + \i_q(p) \sim 3 \wi.
\end{equation*}
  The group $\T$ acts on $\JacC$ by tensor product of line bundles. Each element $L_k$ of $\T$ thus induces an involution on $\JacC$, and therefore also on $C$. We have the following description:

\begin{proposition} \label{prop:beta}
  Let $k \in \Delta = \{0,1,\lambda,\infty\}$. The involution $C \xrightarrow{\otimes L_k} C$ coincides with the composition $\i_{w_k} \circ \i_{\wi}$. There exists an automorphism $\beta_k$ of $\P^1$ such that the following diagram commutes:
\begin{equation*}
\begin{tikzcd}
 C \arrow{r}{\otimes L_k} \arrow{d}{\pi} & C \arrow{d}{\pi}  \\
 \P^1 \arrow{r}{\beta_k}  & \P^1
\end{tikzcd}
\end{equation*}
  If $k = \infty$, $\beta_k$ is the identity map. 
  For $k \in \Delta \setminus \{\infty\}$, $\beta_k$ is the unique automorphism of $\P^1$ permuting the points in the pairs $\{k,\infty\}$ and $\Delta \setminus \{k , \infty \}$.
More precisely, we have:
\begin{align*}
  \beta_0 (z)          = \frac{\lambda}{z} ,\quad
  \beta_1 (z)          = \frac{z-\lambda}{z-1} ,\quad
  \beta_\lambda (z)    = \frac{\lambda z - \lambda}{z - \lambda}
\end{align*}
\
\end{proposition}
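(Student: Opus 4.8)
The plan is to translate everything into the group law on $C$ with origin $\wi$, where the tensor action becomes an explicit translation. Under the Abel--Jacobi isomorphism $C \xrightarrow{\sim} \JacC$, $p \mapsto \O(p - \wi)$, the map $C \xrightarrow{\otimes L_k} C$ sends a point $p$ to the unique $p'$ with $\O(p' - \wi) \cong \O(p - \wi)\otimes L_k = \O(p + w_k - 2\wi)$; equivalently $p' = p \oplus [w_k]$, translation by the class of $w_k$. Since $L_k^2 = \O$, this class is $2$-torsion, so $\ominus[w_k] = [w_k]$. First I would record that $\i_{\wi}$ is negation: its defining relation $p + \wi + \i_{\wi}(p)\sim 3\wi$ gives $\i_{\wi}(p) = \ominus p$, which on the Weierstrass model is exactly the hyperelliptic involution $(x,y)\mapsto(x,-y)$, hence the deck involution of $\pi$. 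Likewise $\i_{w_k}(p) = \ominus p \ominus [w_k]$. Composing, $(\i_{w_k}\circ\i_{\wi})(p) = \ominus(\ominus p)\ominus[w_k] = p \ominus [w_k] = p \oplus [w_k]$, which is precisely $\otimes L_k$. This proves the first assertion.

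Next I would establish the descent to $\P^1$. Translation $\tau_a$ by any $2$-torsion element $a$ commutes with negation, since $\ominus(p\oplus a) = (\ominus p)\oplus(\ominus a) = (\ominus p)\oplus a$. As $\pi$ is the quotient of $C$ by the hyperelliptic involution $\i_{\wi}$, the automorphism $\otimes L_k$ descends to a well-defined automorphism $\beta_k$ of $\P^1 = C/\i_{\wi}$, yielding the commutative square. For $k = \infty$ we have $L_\infty = \O$, so $\otimes L_\infty = \mathrm{id}$ and $\beta_\infty = \mathrm{id}$.

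To pin down $\beta_k$ for $k\neq\infty$, I would use that $\beta_k$ permutes the branch locus of $\pi$, namely $\Delta = \Wei$, the image of the four Weierstrass (equivalently $2$-torsion) points $\{\wi, w_0, w_1, w_\lambda\}$, which form a Klein four-group under $\oplus$. Translation $\tau_{[w_k]}$ by a nonzero $2$-torsion element is a fixed-point-free automorphism of this group of order $2$, hence acts on the four points as a double transposition; tracking $\wi\mapsto w_k$ shows it swaps $\{\wi, w_k\}$ and the complementary pair. Pushing down by $\pi$, $\beta_k$ swaps $\{\infty, k\}$ and $\Delta\setminus\{\infty,k\}$, as claimed. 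A Möbius transformation is determined by three values, so this double transposition determines $\beta_k$ uniquely; the explicit formulas then follow by writing the Möbius map with the prescribed images — e.g. $0\leftrightarrow\infty$ and $1\leftrightarrow\lambda$ give $z\mapsto\lambda/z$ — and verifying the fourth point is correctly sent.

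The routine parts are the group-law bookkeeping and the final verification of the three explicit formulas. The one point deserving care, and the crux of the argument, is the descent step: one must confirm that $\otimes L_k$ genuinely commutes with the hyperelliptic involution (which is exactly where $2$-torsion enters, via $\ominus[w_k]=[w_k]$) so that it descends to the quotient, and that the induced permutation of branch points is the correct double transposition rather than a $4$-cycle or a map fixing some point. Everything else is a direct computation.
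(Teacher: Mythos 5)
Your proof is correct and follows essentially the same route as the paper's: identify $\otimes L_k$ with $\i_{w_k}\circ\i_{\wi}$ via the group law (you phrase it as translation by the $2$-torsion class $[w_k]$, the paper as collinearity of $q$, $w_k$, $\i_{\wi}(p)$ in the Weierstrass model), then descend through $\pi$ using commutation with the hyperelliptic involution, and read off $\beta_k$ from its double-transposition action on the branch points. Your version is more explicit about why the descent works and why the permutation is a double transposition, but the underlying argument is the same.
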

\begin{proof}
	Let $M = \O(p - \wi)$ be an arbitrary line bundle of degree 0. Then $M \otimes L_k = \O(p - w_k) = \O(q - \wi)$ for a suitable $q \in C$. The group law on $C$ implies that $q$, $w_k$ and $\i_{\wi}(p)$ are collinear in the projective model $\P^2$ of affine coordinates $(x,y)$. This implies $q = \i_{w_k} \circ \i_{\wi} (p)$.
  Since the hyperelliptic involution $\i_{\wi}$ and the twist $\i_{w_k} \circ \i_{\wi}$ commute for all $k$, the map $\beta_k$ is well defined.

  The set $W := \{w_0,w_1,w_\lambda,\wi\}$ is invariant under the map $\i_{w_k} \circ \i_{\wi}$. Since points $w_k$ in $C$ are projected by $\pi$ to points $k$ in $\P^1$ for $k \in \Delta$, the map $\beta_k$ is as described.
\end{proof}

\subsection{Indecomposable rank 2 vector bundles over $C$}
  The classification of vector bundles $E$ of rank 2 over $C$ was achieved by Atiyah in ~\cite{atiyahvectorbundles}. In this Section we recall some of his results. For a fixed determinant, the set of these bundles is parametrized by the Jacobian of $C$. 

  Let $E_0$, $E_1$ be the unique non trivial extensions given by exact sequences
\begin{equation*} \label{eq:E1}
	0 \to \O \to E_1 \to \O(\wi) \to 0 \qquad \text{and} \qquad 0 \to \O \to E_0 \to \O \to 0.
\end{equation*}
  We have the following result:

\begin{theorem}[Atiyah, \cite{atiyahvectorbundles}]\label{th:E1}
  The bundles $E_1$ and $E_0$ satisfy the following properties:
\begin{enumerate}
  \item The vector bundle $E_1$ is the unique indecomposable rank 2 bundle of determinant $\O(\wi)$ up to isomorphism. 
  \item Let $L \in \JacC$. Then, there is a unique inclusion $L \subset E_1$.
  \item The bundle $E_1$ does not admit non-scalar automorphisms.
  \item The indecomposable rank 2 vector bundles of trivial determinant are exactly those of the form $E_0 \otimes L_k$, where $L_k$ is a torsion line bundle.
  \item There is a unique inclusion $\O \subset E_0$.
  \item Let $T = t_1 + t_2$ be a reduced divisor in $C$. Consider the following set of parabolic bundles over $(C,T)$:
\begin{align*}
  P_1 = \{(E_0, \bp = (p_1, p_2)) \ | \ p_1 \subset \O \text{ and } p_2 \not \subset \O\}
\end{align*}
  The group of automorphisms of $E_0$ modulo scalar automorphisms acts transitively and freely on this set.  
\end{enumerate}
\end{theorem}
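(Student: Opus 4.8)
The statements (1)--(5) are part of Atiyah's classification of vector bundles over an elliptic curve \cite{atiyahvectorbundles}, and I would quote them from there, supplying only the short cohomological computations that pin down the \emph{uniqueness} assertions. For (5), applying $\Hom(\O,-)$ to the defining sequence $0 \to \O \to E_0 \to \O \to 0$ gives an exact sequence $0 \to H^0(\O) \to H^0(E_0) \to H^0(\O) \xrightarrow{\delta} H^1(\O)$, and since the extension is non-trivial the connecting map $\delta$ is an isomorphism $\mathbb{C} \xrightarrow{\sim} \mathbb{C}$; hence $h^0(E_0) = 1$ and the inclusion $\O \hookrightarrow E_0$ is unique up to scalar, i.e. there is a unique sub-line-bundle $\O \subset E_0$. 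The analogous count $h^0(E_1 \otimes L^{-1}) = 1$ for the degree $1$ twisted Atiyah bundle yields (2), while (1), (3) and (4) are pure quotations. The real content to be established from scratch is (6), so I focus there.

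Write $\ell_i \subset (E_0)_{t_i}$ for the line cut out in the fibre at $t_i$ by the canonical sub-bundle $\O \subset E_0$ of (5). The condition $p_1 \subset \O$ forces $p_1 = \ell_1$, so a point of $P_1$ is determined by its second direction $p_2$, which by hypothesis ranges over $\P((E_0)_{t_2}) \setminus \{\ell_2\}$; choosing an affine coordinate on this $\P^1$ with $\ell_2$ placed at infinity identifies $P_1 \cong \mathbb{A}^1$. It then remains to produce an action of $\Aut(E_0)/\mathbb{C}^\ast$ on this $\mathbb{A}^1$ and to show it is simply transitive.

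To that end I would determine $\operatorname{End}(E_0)$. The composite $N : E_0 \twoheadrightarrow E_0/\O \xrightarrow{\sim} \O \hookrightarrow E_0$, built from the quotient and the canonical sub-bundle of (5), is a non-scalar nilpotent endomorphism with $N^2 = 0$, so $\dim \operatorname{End}(E_0) \ge 2$. On the other hand, applying $\Hom(E_0,-)$ to the defining sequence and using $\Hom(E_0,\O) = H^0(E_0^\vee) = H^0(E_0) = \mathbb{C}$ (here $E_0^\vee \cong E_0$ since $\det E_0 = \O$) gives the exact sequence $0 \to \mathbb{C} \to \operatorname{End}(E_0) \to \mathbb{C}$, whence $\dim \operatorname{End}(E_0) \le 2$. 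Therefore $\operatorname{End}(E_0) = \mathbb{C}\,\mathrm{id} \oplus \mathbb{C}\, N$, and by construction $\ker N_{t_i} = \operatorname{im} N_{t_i} = \ell_i$ in every fibre. Consequently $\Aut(E_0) = \{a\,\mathrm{id} + bN : a \ne 0\}$ and $\Aut(E_0)/\mathbb{C}^\ast \cong \{\mathrm{id} + cN : c \in \mathbb{C}\} \cong \mathbb{G}_a$.

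Finally I would read off the action on $P_1$. Since $g_c := \mathrm{id} + cN$ fixes each $\ell_i$ (because $N_{t_i}$ kills $\ell_i$), it preserves $p_1 = \ell_1$ and hence maps $P_1$ to itself. Picking a basis $(e_1,e_2)$ of $(E_0)_{t_2}$ with $\ell_2 = \langle e_1 \rangle$ and $N_{t_2} e_2 = e_1$, a direction $p_2 = \langle u e_1 + e_2 \rangle$ is sent by $g_c$ to $\langle (u+c) e_1 + e_2 \rangle$; thus in the coordinate $u$ of the second paragraph the action is the translation $u \mapsto u + c$. This is the standard simply transitive action of $\mathbb{G}_a$ on $\mathbb{A}^1$, which is exactly freeness together with transitivity. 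The main obstacle is the middle step — establishing $\operatorname{End}(E_0) \cong \mathbb{C}[\epsilon]/(\epsilon^2)$ and identifying the kernel and image of its nilpotent generator with the distinguished lines $\ell_i$ — since everything else then reduces to the elementary geometry of $\mathbb{G}_a$ acting by translations.
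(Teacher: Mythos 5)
Your proposal is correct, but it cannot be compared to a proof in the paper because the paper gives none: Theorem \ref{th:E1} is stated as a quotation of Atiyah's classification and is followed only by a remark, so items (1)--(5) are treated as citations and item (6) — which concerns parabolic structures and is not literally in Atiyah — is left to the reader. Your supplementary arguments fill this in correctly. The cohomological counts $h^0(E_0)=1$ and $h^0(E_1\otimes L^{-1})=1$ do give (5) and (2), with the one small point worth making explicit that a nonzero section (resp.\ nonzero map $L\to E_1$) is automatically saturated: its saturation is a line subbundle of degree $\geq 0$ (resp.\ $\geq 0$), which semistability of $E_0$ (resp.\ stability of $E_1$, slope $1/2$) caps at degree $0$, forcing the twisting divisor to vanish. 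For (6), your identification $\mathrm{End}(E_0)\cong\mathbb{C}[\epsilon]/(\epsilon^2)$ via the nilpotent $N$ with $\ker N_{t_i}=\mathrm{im}\,N_{t_i}=\ell_i$, together with the translation action $u\mapsto u+c$ of $\mathrm{id}+cN$ on $\P((E_0)_{t_2})\setminus\{\ell_2\}\cong\mathbb{A}^1$, is exactly the right mechanism; it is also consistent with the paper's remark that $E_0$ admits non-scalar automorphisms and that, projectively, the one-parameter group acts on the complement of the distinguished directions. In short, where the paper outsources everything to \cite{atiyahvectorbundles}, you have supplied a complete and correct derivation of the one assertion that genuinely needs an argument.
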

\begin{remark}
  By (3), $E_1$ only admits scalar automorphisms. On the other hand, multiplication by torsion line bundles give the automorphisms of the projectivisation bundle $\P E_1$.
  In contrast, $E_0$ admits non-scalar automorphisms. In fact, for every line bundle $L$ of degree $-1$ and every parabolic bundle in $P_1$, there is a unique inclusion $L \subset E_1$ such that $L$ passes by both parabolic directions $p_1$ and $p_2$.
\end{remark}

\paragraph{The geometry of $\P E_1$}
  Let us fix a point $p \in C$ and a line bundle $L \in \JacC$. By (3) in Theorem \ref{th:E1}, $L$ is a subbundle of $E_1$ and the inclusion is unique (up to scalar multiplication). This inclusion thus defines a single parabolic direction $m_p(L)$ in the fiber $E_1|_p$ of $p$. Hence, we have a map 
\begin{align*}
  m_p : \JacC \to  E_1|_p \cong \P^1_z.
\end{align*}
  The image $m_p(L)$ can be seen as a point in the fiber $\P E_1 |_p$ of the projective bundle $\P E_1$. The line subbundle $L \subset E_1$  correspond to a cross-section $s_L$ of self-intersection $+1$ of this ruled surface.
  The following Proposition describes the geometrical situation of these cross-sections: for every point $m$ in the fiber $\P E_1 |_p$ there are generically two +1 cross-sections of $\P E_1$ passing through $m$. 
\begin{proposition} \label{prop:2sec}
  The map $m_p: \JacC \to \P^1_{z}$ is 2:1 ramified in four points. If $z \in \P^1_z$ is a non-ramification point, we have
\begin{align*} 
  m_p^{-1}(z) = \{\O(q - w_\infty) , \O(\i_{p}(q) - w_\infty) \}
\end{align*}
for $q \in C$.
  The preimages of ramification points are the four line bundles $M = \O(q - w_\infty)$ with $\i_p(q) = q$. 
  They satisfy $M^2 = \O(\wi - p)$.
\end{proposition}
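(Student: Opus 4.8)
The plan is to pin down the fibres of $m_p$ by a single determinant computation, after which the degree and the ramification follow formally. I first identify $\JacC$ with $C$ via $q \mapsto \O(q - \wi)$, so that $m_p$ becomes a morphism $C \to \P^1_z$ (it is algebraic because the inclusions $L \subset E_1$ of Theorem \ref{th:E1}(2) vary in a flat family). The strategy is then: (i) show that the fibre of $m_p$ through $L$ is exactly $\{L,\ \O(\wi - p) \otimes L^{-1}\}$; (ii) deduce that $m_p$ has degree $2$, since this is generically two distinct points; (iii) apply Riemann–Hurwitz, which with $g_C = 1$ reads $0 = 2(2\cdot 0 - 2) + R$, hence $R = 4$, forcing exactly four simple ramification points; and (iv) identify the ramification locus with the square roots of $\O(\wi - p)$.

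The key step is the claim that, for non-isomorphic $L, L'$ in $\JacC$, one has $m_p(L) = m_p(L')$ if and only if $L \otimes L' \cong \O(\wi - p)$. To prove it I combine the two inclusions into a sheaf map
\begin{equation*}
  \varphi = (\iota, \iota') : L \oplus L' \longrightarrow E_1 .
\end{equation*}
Since $L$ and $L'$ are distinct line subbundles their fibres in $E_1$ coincide only on a finite set, so $\varphi$ is generically an isomorphism, hence injective with torsion cokernel $Q$. Comparing degrees gives $\text{length}(Q) = \deg E_1 - \deg(L \oplus L') = 1 - 0 = 1$, and comparing determinants gives $\O(\wi) = \det E_1 \cong (L \otimes L') \otimes \O(D)$, where $D$ is the length-one divisor supporting $Q$. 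Now $m_p(L) = m_p(L')$ says precisely that the two fibres agree over $p$, i.e. $D = p$, whence $L \otimes L' \cong \O(\wi - p)$. Conversely, if $L \otimes L' \cong \O(\wi - p)$ then $\O(D) \cong \O(p)$; since degree-one effective divisors on a curve of genus one are rigid this forces $D = p$, so the two fibres meet over $p$ and $m_p(L) = m_p(L')$.

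It remains to read off the stated form. Writing $L = \O(q - \wi)$, the partner $\O(\wi - p)\otimes L^{-1}$ equals $\O(2\wi - p - q) = \O(q' - \wi)$ with $q' + p + q \sim 3\wi$, i.e. $q' = \i_p(q)$ by the definition of $\i_p$; this is exactly the asserted description of $m_p^{-1}(z)$, and it shows the generic fibre has two points, so $\deg m_p = 2$ and the ramification count from Riemann–Hurwitz applies. The ramified points are those where the two partners coincide, $L \cong \O(\wi - p)\otimes L^{-1}$, that is $M := L$ with $M^2 \cong \O(\wi - p)$; there are exactly four such $M$, since the square roots of a fixed line bundle form a torsor under the $2$-torsion group $\T$ of order $4$, recovering both the four ramification points and the final identity (in terms of $C$ these are the four fixed points $q$ of $\i_p$, consistent with $2q \sim 3\wi - p$). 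The only delicate point I anticipate is the determinant–cokernel bookkeeping in the key step: justifying that $\varphi$ is injective with a length-one cokernel, and that $\O(D) \cong \O(p)$ pins down $D = p$ rather than merely $D \sim p$ — here the rigidity of degree-one divisors in genus one is exactly what makes the argument go through.
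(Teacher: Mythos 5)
Your proof is correct, but it takes a genuinely different route from the paper's. The paper works on the other side of an elementary transformation: if two distinct $L, L' \in \JacC$ pass through the same direction $m$ over $p$, the sections $s_L$ and $s_{L'}$ of $\P E_1$ meet only at $m$, so $\elem^+_p(E_1,m)$ splits as $\O(p+q-\wi)\oplus\O(p+\tilde q-\wi)$; the determinant identity $\det\elem^+_p(E_1)=\O(\wi+p)$ then forces $\tilde q=\i_p(q)$, and the bound of two preimages comes from the remark that a third subbundle through $m$ would give three pairwise disjoint sections, making the transformed bundle trivial and hence $E_1$ decomposable. You instead stay on $E_1$ itself: the evaluation $L\oplus L'\to E_1$ is injective with cokernel of length one supported at the unique point where the two fibres coincide, and taking determinants of that exact sequence shows this point is $p$ exactly when $L\otimes L'\cong\O(\wi-p)$. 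This single computation yields both the description of the fibre and the bound of two at once (the partner of $L$ is unique), with the rigidity of degree-one effective divisors in genus one correctly invoked for the converse direction; it also sidesteps the intersection-theoretic claim that $s_L$ and $s_{L'}$ meet transversally in one point, which your length count replaces. Your Riemann--Hurwitz computation of the four ramification points is an extra consistency check; the paper obtains the same count from the fixed points of $\i_p$, equivalently the $\T$-torsor of square roots of $\O(\wi-p)$, and the two agree. The only soft spot, shared with the paper, is that the algebraicity of $m_p$ is asserted rather than proved, but this is not a gap relative to the original argument.
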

\begin{proof}
  Let $L = \O(q - \wi) \in \JacC$ be a non-ramification point of $m_p$. Let  $L' = \O(\tilde{q} - \wi) \not = L$ such that $m_p(L) = m_p(L') = m$. The associated cross-sections $s_L$ and $s_{L'}$ only intersect in $m$. In particular, the underlying bundle $E$ of the image of the parabolic bundle $(E_1, m_p(L))$ over $(C,p)$ via $\elem^+_p$ is decomposable:
\begin{align*}
  E = \O(p+q-\wi) \oplus \O(p + \tilde{q} - \wi)
\end{align*}
  By the properties of elementary transformations, we have that $\det E = \O(\wi + p)$, which implies $\tilde{q} = \i_p(q)$. 
  Suppose that there is a third distinct line bundle $L''$ passing through $m$. Then, the projective bundle $\P E$ has three non crossing $+0$ sections $s_L$, $s_{L'}$, $s_{L''}$, implying that $E$ is trivial. But then $E_1 = \elem^-_p(E)$ is decomposable, which is absurd. This proves that the map $m_p$ is 2:1, and the four ramification points are the fixed points of the involution $\i_p$.
\end{proof}

\subsection{The Tu isomorphism} \label{sec:tu}
  We say that a rank 2 (non parabolic) vector bundle $E$ over $C$ is semistable (resp. stable) if it is $\bmu$-semistable (resp. stable) for $\bmu = (0, \ldots, 0)$. The moduli space $\BunO(C)$ of semistable rank 2 vector bundles over $C$ with trivial determinant is constructed in \cite{tu}.
 
  The bundles represented in $\BunO(C)$ are in fact all strictly semistable. They are the decomposable bundles $L \oplus L^{-1}$, together with the indecomposable bundles $E_0 \otimes L_k$, with $L_k$ a torsion line bundle.

\begin{theorem}[Tu, \cite{tu}] \label{prop:Tu}
  The moduli space $\BunO(C)$ is isomorphic to the quotient of $\JacC$ by the hyperelliptic involution. More precisely, there is a canonical isomorphism 
$$\BunO(C) \xrightarrow{\Tu} \P^1$$
such that $\Tu (\mP) = \pi (p)$, where $\mP = [L \oplus L^{-1}]$ for $L = \O (p - \wi)$ and $\pi$ is the hyperelliptic cover. Moreover, we have
\begin{itemize}
  \item If $p \in \P^1 \setminus \{0,1,\lambda,\infty\}$, $\mP$ represents a single isomorphism class $[L \oplus L^{-1}]$.
  \item If $p = k \in \{0,1,\lambda,\infty\}$, $\mP$ represents two different isomorphism classes $[L_k \oplus L_k]$ and $[E_0 \otimes L_k]$.
\end{itemize}
\end{theorem}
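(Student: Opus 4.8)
\emph{Proof proposal.} The plan is to describe the points of $\BunO(C)$ through their Jordan--Hölder gradeds, realize the map $\Tu$ as a genuine morphism coming from a Poincaré family, and then upgrade the resulting bijection to an isomorphism of varieties by a normality argument. The whole statement will be pinned to the identification of $\JacC$ modulo the hyperelliptic involution.

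First I would classify the semistable bundles and their $s$-equivalence classes. By Atiyah's classification (Theorem~\ref{th:E1}), a rank $2$ bundle of degree $0$ with trivial determinant is either decomposable, hence $L \oplus L^{-1}$ with $\deg L = 0$, or indecomposable, hence $E_0 \otimes L_k$ for a torsion bundle $L_k$; a direct inspection of line subbundles shows all of these are semistable and none is stable, so each determines a point of $\BunO(C)$ via its graded bundle. For $L \neq L^{-1}$ the graded is $L \oplus L^{-1}$, and since $\text{Ext}^1(L, L^{-1}) = H^1(L^{-2}) = 0$ (a nontrivial degree $0$ line bundle on $C$ is acyclic) this is the unique semistable bundle with that graded. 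For $L = L_k$ a $2$-torsion bundle one has instead $\text{Ext}^1(L_k, L_k) = H^1(\OC) = \mathbb{C}$, so there are exactly two isomorphism classes with graded $L_k \oplus L_k$, namely the split bundle $L_k \oplus L_k$ and the unique nonsplit extension, which by Atiyah's description is $E_0 \otimes L_k$. These two are $s$-equivalent and therefore represent the same moduli point, which is precisely the fiber behaviour claimed over $k \in \{0,1,\lambda,\infty\}$.

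Next I would produce $\Tu$ as a morphism. The Poincaré line bundle $\mP$ on $\JacC \times C$ together with its dual gives a flat family $\mP \oplus \mP^{-1}$ of semistable bundles with fiber $L \oplus L^{-1}$ over $L$ and constant determinant $\O$; by the universal property of the coarse moduli space this yields a morphism $f : \JacC \to \BunO(C)$. Writing $L = \O(p - \wi)$ to identify $\JacC \cong C$, the involution $L \mapsto L^{-1}$ becomes $p \mapsto \i_{\wi}(p)$, whose quotient is the hyperelliptic cover $\pi : C \to \P^1$ sending the Weierstrass points $w_k$ to $\{0,1,\lambda,\infty\}$. Since $L$ and $L^{-1}$ give isomorphic bundles, $f$ factors through $C / \i_{\wi} \cong \P^1$; the classification of gradeds above shows the induced map $\bar f : \P^1 \to \BunO(C)$ is surjective (every graded is of the form $N \oplus N^{-1}$) and injective (uniqueness of the decomposition into line bundles separates distinct pairs $\{L, L^{-1}\}$), hence bijective onto the one-dimensional $\BunO(C)$.

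Finally I would upgrade $\bar f$ to an isomorphism. As a GIT quotient of a smooth parameter scheme by a reductive group, $\BunO(C)$ is a normal projective variety, and the previous step shows it is a curve; a normal curve is smooth, so $\bar f$ is a bijective morphism between smooth projective curves over $\mathbb{C}$ and is therefore an isomorphism. Composing $\bar f^{-1}$ with the identification $\P^1 = C / \i_{\wi}$ produces $\Tu$ with $\Tu(\mP) = \pi(p)$, and the Ext computations of the first step read off the two fiber descriptions. The main obstacle I expect is exactly this last upgrade together with the identification of $E_0 \otimes L_k$ with the split $2$-torsion bundle in moduli: because there is no stable locus, there is no fine moduli space or universal family to exploit, so one must argue entirely through the coarse moduli property and the normality of the GIT quotient rather than through a classifying map from a universal object.
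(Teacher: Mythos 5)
Your proposal is correct. Note that the paper itself gives no proof of this statement: it is quoted as Theorem~[Tu] with a citation to \cite{tu}, so there is no internal argument to compare against. Your argument is the standard one and it is complete: Atiyah's classification reduces the point set to the gradeds $L \oplus L^{-1}$; the vanishing of $\mathrm{Ext}^1(L^{\pm 1}, L^{\mp 1}) = H^1(L^{\mp 2})$ for non-torsion $L$ (and its one-dimensionality for $2$-torsion $L$, producing exactly the split bundle and $E_0 \otimes L_k$ in a single $s$-equivalence class) gives the fiber descriptions; the Poincar\'e family plus the coarse moduli property produces the morphism from $\JacC$, which descends through the hyperelliptic quotient because $L \oplus L^{-1} \cong L^{-1} \oplus L$; and normality of the GIT quotient legitimately upgrades the resulting bijection $\P^1 \to \BunO(C)$ to an isomorphism in characteristic $0$. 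You correctly identify the genuine subtlety, namely that the absence of stable points rules out any universal-family shortcut and forces the coarse-moduli/normality route. One cosmetic point: when you say a semistable bundle with graded $L \oplus L^{-1}$ is split, you should make explicit that the Jordan--H\"older filtration may present $E$ as an extension in either order, so both $H^1(L^2)$ and $H^1(L^{-2})$ are needed; both vanish for non-torsion $L$, so nothing is lost.
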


  The forgetful map 
  $\BunOCT \xrightarrow{\Forget} \BunO(C)$ 
is defined in the obvious way: $$\Forget[E,\bp] = [E].$$

\section{The coarse moduli space of parabolic bundles over an elliptic curve} \label{sec:moduli}

  Let $T = t_1 + t_2$ be a reduced divisor on $C$. We can assume that the 2:1 cover $\pi:C \to \P^1$ satisfies $\pi(t_1) = \pi(t_2) = t \in \P^1$.
  In this Section, we describe the GIT moduli space $\Bun^{\bmu}_L(C,T)$ of $\bmu$-semistable rank 2 parabolic bundles with parabolic directions over $T$ and fixed determinant $L$.
 This moduli space depends on the choice of weights. More precisely, there is a hyperplane cutting out $[0,1]^2$ into two chambers, and strictly $\bmu$-semistable bundles only occur along this wall. The moduli space $\Bun^{\bmu}_L(C,T)$ is constant in each chamber, i. e. the set of parabolic bundles represented by each of its points does not vary.

  While we are mostly interested in the trivial determinant case, it turns out that the computations are easier in the odd degree case. Therefore, we will first study the odd degree case and then translate our results to the even degree setting.

\subsection{The odd degree moduli space} \label{sub:comp}

  In this Section, we describe the moduli space  $\Buninf^{\bmu}(C,T)$, giving a proof of Theorem \ref{th:p1xp1}. 
  Let us define the \emph{wall} $W \subset [0,1]^2$ as the hyperplane $\mu_1 + \mu_2 = 1$. We will see that semistable bundles arise only when weights are in $W$. Let us start by listing $\bmu$-semistable and $\bmu$-stable bundles in $W$:

\begin{proposition} \label{prop:clasif}
  Let $\bmu$ be weights in the wall $W$, with $\mu_i \not = 0$ for $i = 1,2$. Then, the parabolic bundles represented in $\Buninf^{\bmu}(C,T)$ are exactly the following:
\begin{itemize}
  \item $\bmu$-stable bundles: 
  \begin{itemize}
    \item  $(E_1, \bm)$ with parabolic directions $\bm$ not lying on the same subbundle $L \in \JacC$.
  \end{itemize}
  \item Strictly $\bmu$-semistable bundles: 
  \begin{itemize}
    \item $(E_1, \bm)$ with parabolic directions $\bm$ lying on the same subbundle $L \in \JacC$. 
    \item $E = L \oplus L^{-1}(\wi)$ with $L \in \JacC$  and no parabolics lying on $L^{-1}(\wi)$.
  \end{itemize}
\end{itemize}

   If $\mu_i = 0$, we also find the bundles $E = L \oplus L^{-1}(\wi)$ with $m_i$ lying on $L^{-1}(\wi)$.
\end{proposition}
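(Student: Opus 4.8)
The plan is to run through every rank $2$ bundle $E$ of determinant $\O(\wi)$ and test the semistability inequality against all of its line subbundles. By Atiyah's classification (Theorem \ref{th:E1}(1)), such an $E$ is either the indecomposable bundle $E_1$ or decomposes as $A \oplus B$ with $\deg A + \deg B = 1$. Since $\deg E = 1$, for any line subbundle $L$ one has
\[
  \text{ind}_{\bmu}(\mE, L) = 1 - 2\deg L + \sum_{p_i \not\subset L}\mu_i - \sum_{p_i \subset L}\mu_i ,
\]
and I would exploit throughout that on the wall $W$ the parabolic contribution lies in $[-1,1]$, equal to $+1$ exactly when no $p_i$ lies on $L$ and to $-1$ exactly when both do. Because the index decreases as $\deg L$ grows, only subbundles of maximal degree can threaten semistability, which keeps the case analysis finite.

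First I would dispose of the decomposable case. As $\deg A + \deg B = 1$ is odd, the summands have distinct degrees, say $\deg A \ge 1$ and $\deg B \le 0$; then $A$ is the unique maximal subbundle and $\text{ind}_{\bmu}(\mE, A) \le 2 - 2\deg A$, already negative once $\deg A \ge 2$. Hence semistability forces $\deg A = 1$, i.e.\ $E = L \oplus L^{-1}(\wi)$ with $L \in \JacC$. For such $E$ the test on $A = L^{-1}(\wi)$ returns $-1 + (\text{parabolic term})$, which is $\ge 0$ precisely when the parabolic term equals $+1$, that is when no $p_i$ lies on $L^{-1}(\wi)$, in which case it is exactly $0$: these bundles are strictly semistable and never stable. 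To certify genuine semistability I would note that every subbundle other than $A$ maps nontrivially to the degree-$0$ summand $L$, hence has degree $\le 0$ and index $\ge 0$, so $A$ is the only subbundle attaining index $0$.

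For the indecomposable bundle $E = E_1$, I would first record that $E_1$ has no subbundle of degree $\ge 1$: composing an inclusion $M \hookrightarrow E_1$ with the projection $E_1 \twoheadrightarrow \O(\wi)$ would otherwise split the defining extension, contradicting indecomposability. Thus the only potentially destabilizing subbundles are the degree-$0$ ones, which by Theorem \ref{th:E1}(2) are exactly the $L \in \JacC$, each included uniquely. For these, $\text{ind}_{\bmu}(\mE, L) = 1 + (\text{parabolic term})$ equals $2$, $2 - 2\mu_i$, or $0$ according to whether zero, one, or both parabolic directions lie on $L$; since $\mu_i \ne 0$ forces $\mu_i \in (0,1)$ on $W$, the unique way to reach index $0$ is that both $p_1$ and $p_2$ lie on a common $L$. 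Reading this condition through the maps $m_{t_i}$ of Proposition \ref{prop:2sec} recovers the dichotomy of the statement: when no such $L$ exists $(E_1, \bm)$ is $\bmu$-stable, and when one exists it is strictly semistable.

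The main obstacle I expect is not any single computation but the uniformity of the subbundle bookkeeping — making sure no unlisted line subbundle (especially one of a decomposable $E$ meeting both summands, or a hypothetical high-degree subbundle of $E_1$) can push the index below $0$; Atiyah's structure results are what make this tractable. The closing addendum is then a boundary computation: taking $\mu_i = 0$ (hence $\mu_j = 1$ on $W$) removes the penalty $-\mu_i$, so the index of $A = L^{-1}(\wi)$ still vanishes when $p_i$ alone lies on $A$, which is exactly how the extra strictly semistable bundles $L \oplus L^{-1}(\wi)$ with $m_i$ on $L^{-1}(\wi)$ appear.
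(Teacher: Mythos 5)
Your proposal is correct and follows exactly the route the paper takes (which it states only as a two-line sketch): reduce via Atiyah's classification to the cases $E = E_1$ or $E$ decomposable, then compute $\text{ind}_{\bmu}(\mE,L)$ against the finitely many relevant subbundle types. Your version simply supplies the bookkeeping the paper leaves implicit (maximal-degree subbundles, the $[-1,1]$ bound on the parabolic term, and the boundary case $\mu_i=0$), all of which checks out.
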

\begin{proof}
  By Theorem \ref{th:E1}, the underlying vector bundle of an element of $\Buninf^{\bmu} (C,T)$ is either decomposable or $E_1$. The result follows from direct computation of the $\bmu$-parabolic degree of each bundle.
\end{proof}
  
  Now we describe $s$-equivalence classes. Let $\Gamma$ be the strictly $\bmu$-semistable locus. We have the following result:

\begin{theorem} \label{th:strict}
  Let $\bmu$ be weights in the wall $W$. Then, the locus $\Gamma \subset \Buninf^{\bmu}(C,T)$ is a  curve parametrized by the Jacobian of $C$. More precisely, if $0 < \mu_1, \mu_2 < 1$, for each point $\mL$ in $\Gamma$ there exists a unique $L \in \JacC$ such that $\mL$ represents the following three isomorphism classes of parabolic vector bundles:
\begin{itemize}
  \item $\mE_<(L) =(E_1, \bm)$ with both parabolic directions lying on $L \subset E_1$.
  \item $\mE_>(L) =(L \oplus L^{-1}(\wi), \bp)$ with parabolics outside of $L^{-1}(\wi)$, not on the same $L$.
  \item $\mE_=(L) =(L \oplus L^{-1}(\wi), \bp)$ with both parabolic directions on the same $L$.
\end{itemize}
\end{theorem}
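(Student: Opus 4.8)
The plan is to analyze the $s$-equivalence relation on the strictly $\bmu$-semistable locus directly, using the structure established in Proposition \ref{prop:clasif}. The core idea is that $s$-equivalence identifies bundles with isomorphic graded objects $\operatorname{gr}\mE$, and for strictly semistable rank 2 bundles the Jordan-Hölder filtration $0 \subset \mL \subset \mE$ has a well-defined destabilizing subbundle $\mL$ of maximal slope. By Proposition \ref{prop:clasif}, for weights $\bmu$ in the interior of the wall $W$ (so $0 < \mu_1, \mu_2 < 1$), the strictly $\bmu$-semistable bundles come in three families, and I would show that each point of $\Gamma$ corresponds to a single $L \in \JacC$ by matching their graded objects.

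\emph{First} I would treat each of the three claimed representatives and compute its graded bundle. For $\mE_<(L) = (E_1, \bm)$ with both parabolic directions on $L \subset E_1$, the destabilizing parabolic line subbundle is the parabolic structure $\mL = (L, \bp')$ built from the inclusion $L \subset E_1$ passing through both $p_i$; by Theorem \ref{th:E1}(2) this $L$ is unique, and the quotient is $(E_1/L, \bp'')$, a parabolic line bundle over the empty divisor. \emph{Second}, for the decomposable cases $\mE_>(L)$ and $\mE_=(L)$ with $E = L \oplus L^{-1}(\wi)$, the subbundle $L$ together with the parabolics it carries gives the destabilizing $\mL$: in $\mE_>(L)$ the parabolics lie outside $L^{-1}(\wi)$, so they project isomorphically and $L$ may carry them, while in $\mE_=(L)$ both directions lie on $L$. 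The key computation is that in all three cases the graded bundle $\operatorname{gr}\mE = \mL \oplus (\mE/\mL)$ is the same parabolic line bundle pair, determined precisely by the isomorphism class of $L$ together with the positions of the parabolic directions relative to $L$ — this is what forces the three isomorphism classes to coincide in the moduli space and what produces a single point $\mL \in \Gamma$ for each $L$.

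\emph{Third}, to identify $\Gamma$ with $\JacC$ as a variety, I would argue that the assignment $L \mapsto [\operatorname{gr}\mathcal{E}_<(L)]$ is a bijection onto $\Gamma$: injectivity follows because the isomorphism class of the graded bundle recovers $L$ (the summand of higher slope is $L$ itself), and surjectivity follows from Proposition \ref{prop:clasif}, which lists all strictly semistable bundles as coming from some $L \in \JacC$. Uniqueness of $L$ for a given $\mL \in \Gamma$ is exactly the statement that the destabilizing subbundle of maximal slope is canonical, which I would invoke from the general theory of Jordan-Hölder filtrations recalled in Section \ref{sec:par}.

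The main obstacle I anticipate is verifying carefully that $\mE_<(L)$, $\mE_>(L)$, and $\mE_=(L)$ genuinely share the \emph{same} graded bundle, since these come from different underlying vector bundles ($E_1$ versus the decomposable $L \oplus L^{-1}(\wi)$). The resolution should come from elementary transformations: the relationship between $E_1$ and $L \oplus L^{-1}(\wi)$ passing through a common parabolic configuration is governed by $\elem^{\pm}$ as in the proof of Proposition \ref{prop:2sec}, and I expect that applying the elementary transformation at the parabolic points exchanges $E_1$ with the decomposable bundle while preserving the parabolic line subbundle structure up to the twist $\O(t_i)$. The delicate point is tracking the parabolic weights through these transformations to confirm that the slopes of $\mL$ and $\mE$ remain equal (so that the bundle stays in $\Gamma$) and that the three configurations collapse to one $s$-equivalence class, which is precisely the wall-crossing phenomenon the paper highlights.
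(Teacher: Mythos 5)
Your overall strategy --- write down a Jordan--H\"older filtration for each of the three representatives, check that the graded objects coincide, and recover $L$ from the graded object --- is exactly the paper's proof. However, there is a concrete error in your treatment of $\mE_>(L)$. You propose to take as destabilizing subobject ``the subbundle $L$ together with the parabolics it carries,'' but no such destabilizing object exists: the defining condition of $\mE_>(L)$ is that the two parabolic directions do \emph{not} lie on the same copy of $L$ inside $L \oplus L^{-1}(\wi)$ (and every degree-$0$ line subbundle of $L\oplus L^{-1}(\wi)$ is a copy of $L$). Hence any such copy passes through at most one parabolic direction and has parabolic slope at most $\max(\mu_1,\mu_2) < 1 = \mathrm{slope}(\mE_>(L))$, so it does not give a Jordan--H\"older filtration. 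The correct choice, and the one the paper makes, is the subobject $L^{-1}(\wi)$ with empty parabolic structure, of slope $1$, with both parabolics appearing on the quotient $(L,\bp)$; this yields $\mathrm{gr}\,\mE_>(L) \cong (L^{-1}(\wi),\phi)\oplus(L,\bp)$, which visibly matches $\mathrm{gr}\,\mE_<(L) \cong (L,\bp')\oplus(L^{-1}(\wi),\phi)$. The asymmetry --- sub $=L$ for $\mE_<$ but sub $=L^{-1}(\wi)$ for $\mE_>$ --- is precisely the mechanism of the wall-crossing, so it cannot be glossed over; as written, your filtration for $\mE_>(L)$ would fail the equal-slope test and the comparison of graded objects would not go through.

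Two smaller points. Your appeal to ``the destabilizing subbundle of maximal slope is canonical'' is false for strictly semistable bundles: $\mE_=(L)$ has two distinct destabilizing subobjects, $(L,\bp')$ and $(L^{-1}(\wi),\phi)$, both of slope $1$. What is canonical is the graded object, and that already determines $L$ as its degree-$0$ summand, which is all you need for uniqueness. Finally, the anticipated detour through elementary transformations to compare the graded objects of $\mE_<(L)$ and $\mE_>(L)$ is unnecessary: once the correct filtrations are chosen, the graded objects are equal by inspection, and the paper's proof is exactly this short computation together with the exhaustiveness of Proposition~\ref{prop:clasif}.
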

\begin{proof}
  We claim that the three bundles above are all of the same $s$-equivalence class, i. e. they are all identified in a single point in the moduli space. Indeed, a Jordan-Hölder filtration for the first configuration is $0 \subset (L, \bp') \subset \mE_<(L)$, where $(L, \bp')$ is the unique parabolic structure over $(X,T)$. The bundles $(L, \bp')$ and $\mE_<(L)$ have equal slope 1. This gives the graded bundle 
\begin{equation*}
  \text{gr}_{\bmu} \mE_<(L) = (L, \bp') \oplus (L^{-1}(\wi),\phi).
\end{equation*}
  For the second configuration, we choose the filtration $0 \subset (L^{-1}(\wi), \phi) \subset \mE_>(L)$
with associated graded bundle 
\begin{equation*}
  \text{gr}_{\bmu} \mE_>(L) = (L^{-1}(\wi), \phi) \oplus (L,\bp).
\end{equation*}
  Since clearly $\text{gr}_{\bmu} \mE_<(L) \cong \text{gr}_{\bmu} \mE_>(L)$, the parabolic bundles $\mE_<(L)$ and $\mE_>(L)$ are in the same $s$-equivalence class. The second filtration works also for the bundle $\mE_=(L)$, hence this bundle is also identified with the previous two.

  From the description of strictly $\bmu$-semistable bundles in Proposition \ref{prop:clasif}, it is clear that no other bundle belongs to the same $s$-equivalence class. Consequently, the map $\Gamma \to \JacC$ given by $[\mE_*(L)] \mapsto L$ is 1:1.
\end{proof}
  
  Fix $\bmu$ in the wall $W$. By Proposition \ref{prop:clasif}, $\bmu$-stable bundles are of the form $(E_1, \bm)$. We identify $E_1|_{t_1} \times E_1|_{t_2} \cong \P^1_{m_1} \times \P^1_{m_2}$, where $E_1|_{t_i}$ is the fiber of $t_i$. Define
\begin{align*}
  M: \P^1_{m_1} \times \P^1_{m_2} \to \Buninf^{\bmu}(C,T) 
\end{align*}
by $M(m_1, m_2) = [E_1, \bm = (m_1, m_2)]$. 
  We are now in position to prove Theorem \ref{th:p1xp1} when weights are in the wall.
  
\begin{theorem} \label{th:p1xp1wall}
  Fix a vector of weights $\bmu$ in the wall $W$. Then, the map $M$ is an isomorphism. The curve $\Gamma$ is of bidegree (2,2) in $\P^1_{m_1} \times \P^1_{m_2}$.
\end{theorem}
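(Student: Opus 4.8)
The plan is to show that $M$ is a bijective morphism between irreducible projective varieties and then upgrade bijectivity to an isomorphism using normality of the GIT quotient, working over $\mathbb{C}$. First I would check that $M$ is a genuine morphism. Since $E_1$ is a single fixed bundle (Theorem \ref{th:E1}(1)), choosing a pair of parabolic directions defines a flat family of parabolic bundles parametrized by $\P E_1|_{t_1} \times \P E_1|_{t_2} \cong \PPsin$, and the coarse moduli property of $\Buninf^{\bmu}(C,T)$ produces $M$. Surjectivity is essentially already available: by Proposition \ref{prop:clasif} every $\bmu$-stable point has underlying bundle $E_1$ and so lies in the image, while by Theorem \ref{th:strict} each point of the strictly semistable locus $\Gamma$ is represented by $\mE_<(L) = (E_1, \bm)$ with both directions on $L$, hence also lies in the image.

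For injectivity I would argue stratum by stratum. On the $\bmu$-stable locus, $E_1$ admits only scalar automorphisms (Theorem \ref{th:E1}(3)), and scalars act trivially on the projectivized fibers, so $(E_1,\bm)\cong(E_1,\bm')$ forces $\bm=\bm'$. A stable bundle is never $s$-equivalent to a strictly semistable one, so the images of the two regions of $\PPsin$ cannot collide. It remains to treat $\Gamma$: the preimage $M^{-1}(\Gamma)$ consists exactly of the pairs $(m_{t_1}(L), m_{t_2}(L))$ for $L \in \JacC$, where $m_{t_i}$ is the map of Proposition \ref{prop:2sec}. Since $\mE_<(L) = (E_1,(m_{t_1}(L),m_{t_2}(L)))$ and Theorem \ref{th:strict} asserts that $[\mE_<(L)] \mapsto L$ is a bijection onto $\JacC$, distinct values of $L$ must give distinct pairs, so $M$ is injective along this curve as well. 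Combining the strata yields global injectivity. As $M$ is a proper bijective morphism from an irreducible variety onto a normal variety (the GIT quotient is normal) and, over $\mathbb{C}$, a bijective morphism between irreducible varieties of equal dimension is birational, Zariski's main theorem upgrades $M$ to an isomorphism. The main obstacle here is precisely this last passage: bijectivity alone is not enough, and the argument genuinely relies on the normality of the moduli space.

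For the bidegree of $\Gamma$ I would use the explicit description $\Gamma = \{(m_{t_1}(L), m_{t_2}(L)) : L \in \JacC\} \subset \PPsin$ together with Proposition \ref{prop:2sec}. The parametrization $L \mapsto (m_{t_1}(L), m_{t_2}(L))$ is birational onto its image, its injectivity being forced exactly as above by Theorem \ref{th:strict}, so the bidegree can be read off from intersections with the two rulings. Intersecting $\Gamma$ with a general fiber $\{m_1 = c\}$ counts the fiber $m_{t_1}^{-1}(c)$, which has two elements by Proposition \ref{prop:2sec}; symmetrically, a general fiber $\{m_2 = c\}$ meets $\Gamma$ in the two points of $m_{t_2}^{-1}(c)$. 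Hence $\Gamma$ has bidegree $(2,2)$. The one point demanding care is that the involutions $\i_{t_1}$ and $\i_{t_2}$ governing the fibers of $m_{t_1}$ and $m_{t_2}$ are distinct (as they are for $t_1 \neq t_2$), so that the parametrization does not secretly factor through a quotient and halve the computed degrees.
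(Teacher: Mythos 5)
Your proof follows essentially the same route as the paper's: bijectivity of $M$ is deduced stratum by stratum from Proposition \ref{prop:clasif} and Theorem \ref{th:strict}, and the bidegree of $\Gamma$ is read off from the $2{:}1$ maps of Proposition \ref{prop:2sec} by intersecting with the two rulings. The only difference is that you explicitly justify the passage from a bijective morphism to an isomorphism via normality of the GIT quotient and Zariski's main theorem, a step the paper leaves implicit; this is a welcome tightening rather than a divergence.
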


\begin{proof}
	Let $\mE = (E_1, \bm)$ be a parabolic bundle. If $\mE$ is stable, then it is the only parabolic bundle in its $s$-equivalence class. If $\mE$ is semistable, Theorem \ref{th:strict} shows that it is the only parabolic bundle in its $s$-equivalence class having $E_1$ as underlying bundle. This shows injectivity, and surjectivity is also clear from Theorem \ref{th:strict}.

  Strict $\bmu$-semistability occurs when there is a degree 0 line subbundle $L$ passing through both parabolic directions. For a generic choice of $m_1$, there are two of these line subbundles passing through $m_1$, thus defining generically two parabolic directions on the fiber of $t_2$ (see Proposition \ref{prop:2sec}). Hence, the locus of strictly $\bmu$-semistable parabolic configurations is a curve of bidegree (2,2).
  Theorem \ref{th:strict} shows that this locus is exactly $\Gamma$.
\end{proof}

  When we move the weights $\bmu$ outside the wall and inside the chambers, the family of $\bmu$-semistable bundles changes. More precisely, some of the formerly strictly $\bmu$-semistable bundles become stable, while some others become unstable. Therefore, points in $\Gamma$ represent different isomorphism classes of bundles depending on the choice of weights. The following proposition summarizes the situation:

\begin{proposition} \label{prop:outsideodd}
  Let $\bmu$ be a vector of weights outside the wall. Then, the moduli space $\Buninf^{\bmu} (C,T)$ is still isomorphic to $\P_{m_1} \times \P_{m_2}$. Each point in this space represent a single $\bmu$-stable bundle. Points outside $\Gamma$ represent a single parabolic configuration $(E_1, \bm)$ with $\bm$ not lying on the same line subbundle $L \in \JacC$. 
  Each point $\mL \in \Gamma$ represents:
\begin{itemize}
  \item The bundle $\mE_<(L)$, if $\bmu \in I_<$.
  \item The bundle $\mE_>(L)$, if $\bmu \in I_>$.
\end{itemize}
\end{proposition}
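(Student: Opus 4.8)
The plan is to recompute the $\bmu$-parabolic degrees of the candidate bundles of Proposition~\ref{prop:clasif} and Theorem~\ref{th:strict} under the hypothesis $\mu_1+\mu_2\neq 1$, and to read off how the strictly $\bmu$-semistable locus of the wall splits once the weights enter one of the two open chambers $I_<=\{\mu_1+\mu_2<1\}$ and $I_>=\{\mu_1+\mu_2>1\}$. By Theorem~\ref{th:E1} the underlying bundle is either $E_1$ or a decomposable bundle $L\oplus L^{-1}(\wi)$, and since every line subbundle of $E_1$ has degree $\le 0$ (with the degree $0$ ones being exactly the $L\in\JacC$), the only subbundles that can affect (semi)stability are these degree $0$ bundles together with, in the decomposable case, the degree $1$ summand $L^{-1}(\wi)$. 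Thus the whole computation reduces to evaluating $\text{ind}_{\bmu}$ on finitely many types of subbundle.

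First I would settle the chamber $I_<$. For $(E_1,\bm)$ the smallest value of $\text{ind}_{\bmu}(\cdot,L)$ over degree $0$ subbundles is attained by an $L\in\JacC$ carrying both parabolics and equals $1-(\mu_1+\mu_2)>0$, while every other subbundle gives a strictly larger value; hence \emph{every} $(E_1,\bm)$ is $\bmu$-stable and there are no strictly $\bmu$-semistable bundles. Dually, the summand $L^{-1}(\wi)$ of a decomposable bundle gives $\text{ind}_{\bmu}(\cdot,L^{-1}(\wi))=-1+\mu_1+\mu_2<0$, so no decomposable bundle is $\bmu$-semistable. The $\bmu$-semistable bundles are therefore exactly the family $\{(E_1,\bm)\}$ parametrised by $\P^1_{m_1}\times\P^1_{m_2}$; the classifying morphism $M$ attached to the tautological family is now bijective onto the (smooth, since all points are stable) coarse moduli space, and a bijective morphism from a smooth projective surface to a normal projective variety is an isomorphism in characteristic zero. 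Along $\Gamma$ the bundles $\mE_=(L)$ and $\mE_>(L)$ have become unstable, leaving $\mE_<(L)$ as the stable representative, which is the assertion for $I_<$.

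For the chamber $I_>$ I would transport this description through an elementary transformation rather than redo the geometry, the difficulty being that over $\Gamma$ the stable representative is now the \emph{decomposable} bundle $\mE_>(L)$, invisible in the coordinates ``parabolic directions on $E_1$'' (indeed $(E_1,\bm)$ with $\bm$ on a common $L$ is destabilised, $\text{ind}_{\bmu}(\cdot,L)=1-(\mu_1+\mu_2)<0$). The composite $\elem^+_T=\elem^+_{t_1}\circ\elem^+_{t_2}$ carries $\Buninf^{\bmu}(C,T)$ isomorphically to $\Bun^{\hat\bmu}_{\O(\wi+T)}(C,T)$ with $\hat\bmu=(1-\mu_1,1-\mu_2)$, so that $\hat\mu_1+\hat\mu_2=2-(\mu_1+\mu_2)<1$; twisting by a line bundle $M$ with $M^2=\O(-T)$ (which exists since $\deg(-T)$ is even) restores the determinant and yields $\Buninf^{\bmu}(C,T)\cong\Buninf^{\hat\bmu}(C,T)$ with $\hat\bmu\in I_<$. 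The case just proved gives $\Buninf^{\bmu}(C,T)\cong\P^1_{m_1}\times\P^1_{m_2}$ with every point stable, and since $\elem^+_T$ is an isomorphism of moduli problems the stability of $\mE_>(L)$ comes for free. Tracking images under $\elem^+_T$ via the Proposition on elementary transformations then shows that off $\Gamma$ the representative is $(E_1,\bm)$ while over $\Gamma$ it is $\mE_>(L)$, as claimed.

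The main obstacle is exactly this last bookkeeping across the wall: one must verify that $\elem^+_T$ sends the decomposable stable bundles $\mE_>(L)$ of the $I_>$ side to bundles of the form $(E_1,\bm')$ on the $I_<$ side---equivalently that the two positive transforms convert the degree $1$ summand $L^{-1}(\wi)$ into a generic direction rendering the transform indecomposable---and that this identification is compatible with the curve $\Gamma$ and with its parametrisation by $\JacC$ from Theorem~\ref{th:strict}. This is a determinant-and-direction computation in the spirit of Proposition~\ref{prop:2sec}, and it is what guarantees that the underlying scheme genuinely remains $\P^1\times\P^1$, rather than degenerating, when the weights cross the wall.
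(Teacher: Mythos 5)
Your proposal is correct, and for the chamber $I_<$ it coincides with the paper's (very terse) argument: the paper simply says the result follows ``by computing the parabolic $\bmu$-degree of $L$ in each case,'' which is exactly your sign analysis $\text{ind}_{\bmu}(\mE_<(L),L)=1-\mu_1-\mu_2$ and $\text{ind}_{\bmu}(\mE_>(L),L^{-1}(\wi))=\mu_1+\mu_2-1$. Where you diverge is the chamber $I_>$: the paper treats it by the same one-line direct computation (the two indices above simply change sign together across the wall, so $\mE_<(L)$ and $\mE_>(L)$ swap their stable/unstable roles, and $\mE_=(L)$, carrying both destabilizing candidates, is unstable on both sides), whereas you transport the problem through $\elem^+_T$ followed by a twist by a square root of $\O(-T)$. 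Your reduction is legitimate --- the weight arithmetic $\hat\mu_1+\hat\mu_2=2-(\mu_1+\mu_2)$ and the determinant bookkeeping both check out --- but it imports exactly the verification you flag as the ``main obstacle,'' namely that $\elem^+_T$ exchanges the $E_1$-representatives with the decomposable ones along $\Gamma$; this is doable (it is essentially the computation the paper performs later for Theorem~\ref{th:strict2}), but it is strictly more work than the two-line index computation it replaces, so the detour buys nothing here. On the other hand, your proposal is more careful than the paper on a point the paper leaves implicit: the justification that the coarse space remains $\P^1_{m_1}\times\P^1_{m_2}$ off the wall (bijectivity of the classifying map onto a normal projective surface, hence an isomorphism in characteristic zero), and the observation that all decomposable bundles of odd degree are destabilized by their degree-$1$ summand in $I_<$, are worth making explicit.
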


\begin{proof}
  It is sufficient to show that the bundle $\mE_<(L)$ is $\bmu$-stable when $\mu_1 + \mu_2 < 1$ and $\bmu$-unstable when $\mu_1 + \mu_2 > 1$, and that the bundle $\mE_>(L)$ is respectively $\bmu$-stable and $\bmu$-unstable in the opposite chambers. This is proved by computing the parabolic $\bmu$-degree of $L$ in each case.
\end{proof}

  We have proven that $\Buninf^{\bmu}(C,T)$ is isomorphic to $\P^1 \times \P^1$ for every $\bmu$. The same result holds for $\BunO^{\bmu}(C,T)$ applying an elementary isomorphism. This completes the proof of Theorem \ref{th:p1xp1}.
  
  The moduli spaces $\Buninf^{\bmu}(C,T)$ and $\Buninf^{\bnu}(C,T)$ represent the same bundles if, and only if, both weights $\bmu$ and $\bnu$ belong to the same set among the following three:
\begin{itemize}
  \item The \emph{chamber} $I_< = \{(\mu_ 1, \mu_2) \in (0,1)^2 \ |\ \mu_ 1 + \mu_2 < 1\}$. 
  \item The \emph{chamber} $I_> = \{(\mu_ 1, \mu_2) \in (0,1)^2 \ |\ \mu_ 1 + \mu_2 > 1\}$.
  \item The \emph{wall} $W = \{(\mu_ 1, \mu_2) \in (0,1)^2 \ |\ \mu_ 1 + \mu_2 = 1\}$.
\end{itemize}
  
We will thus adopt the notations $\Buninfm(C,T)$, $\BuninfM(C,T)$ and $\Buninf^= (C,T)$ for the moduli spaces $\Buninf^{\bmu} (C,T)$ and $\bmu$ in $I_<$, $I_>$ and $W$ respectively.

\subsection{The even degree moduli space}

  In this Section we use the description of the odd degree moduli space to describe the moduli space $\BunOCT$. The map $$\Elem_{t_2}^+ := \elem_{t_2}^+ \otimes R$$ provides an isomorphism between $\Buninf^{\bmu}(C,T)$ and  $\BunO^{\bmu_2}(C,T)$, where $R$ is a convenient line bundle  and $\bmu_2 = (\mu_1, 1 - \mu_2)$ (see Table \ref{table:bundles}).
  The wall and chambers change in even degree.  The moduli spaces $\BunOCT$ and $\BunO^{\bnu}(C,T)$ represent the same bundles if, and only if, both weights $\bmu$ and $\bnu$ belong to the same set among the following three:
\begin{itemize}
  \item The \emph{chamber} $J_< = \{(\mu_ 1, \mu_2) \in (0,1)^2 \ |\ \mu_ 1 < \mu_2\}$.
  \item The \emph{chamber} $J_> = \{(\mu_ 1, \mu_2) \in (0,1)^2 \ |\ \mu_ 1 > \mu_2\}$.
  \item The \emph{wall} $W' = \{(\mu_ 1, \mu_2) \in (0,1)^2 \ |\ \mu_ 1 = \mu_2\}$.
\end{itemize}

  Recall that we have two kinds of semistable rank 2 vector bundles $E$ of trivial determinant: the decomposable case, i. e. where $E = L \oplus L^{-1}$ with $\text{deg} \ L = 0$, and the indecomposable case $E_0 \otimes L_k$ with $L_k$ a torsion line bundle.
  Let us start  by fixing weights $\bmu$ in the wall, as in the odd degree case. 
\begin{proposition} \label{prop:clasifpar}
  If $\bmu \not = 0$, the parabolic bundles represented in $\BunO^=(C,T)$ are the following:
\begin{itemize}
  \item $\bmu$-stable bundles:
  \begin{itemize}
    \item $(L \oplus L^{-1}, \bm)$, $L \not = L_k$, with no parabolics on $L$ nor on $L^{-1}$. 
    \item $(E_0 \otimes L_k, \bm)$ with no parabolics on $L_k \subset E_0 \otimes L_k$. 
  \end{itemize}
  \item Strictly $\bmu$-semistable bundles:
  \begin{itemize}
    \item $(L \oplus L^{-1}, \bm)$, $L \not = L_k$, with one or two parabolics on $L$ or $L^{-1}$, but not both on the same. 
    \item $(E_0 \otimes L_k, \bm)$ with exacly one parabolic on $L_k \subset E_0 \otimes L_k$. 
    \item $(L_k \oplus L_k, \bm)$ with parabolics not lying on the same $L_k$. 
  \end{itemize}
\end{itemize}
\end{proposition}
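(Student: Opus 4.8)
The plan is to mirror the proof of Proposition \ref{prop:clasif}: first pin down, via the classification of semistable rank $2$ bundles of trivial determinant, the possible underlying bundles $E$, and then compute the $\bmu$-parabolic degree $\text{ind}_{\bmu}(\mE, L')$ for every relevant line subbundle $L' \subset E$. By the Atiyah classification recalled in Theorem \ref{th:E1} together with the Tu description (Theorem \ref{prop:Tu}), the semistable trivial-determinant bundles are the decomposable ones $L \oplus L^{-1}$ with $\deg L = 0$ (including the degenerate $L_k \oplus L_k$) and the indecomposable ones $E_0 \otimes L_k$. Since we work in the wall $W'$ we may set $\mu_1 = \mu_2 = \mu$, and since $\deg E = 0$ any line subbundle $L'$ of negative degree satisfies $\text{ind}_{\bmu}(\mE, L') \geq 2 - 2\mu > 0$; hence only degree $0$ subbundles can affect (semi)stability, and it suffices to locate those and evaluate $\text{ind}_{\bmu}$ according to how many parabolic directions they carry. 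The basic computation is that a degree $0$ subbundle $L'$ gives $\text{ind}_{\bmu}(\mE, L') = 2\mu$, $0$ or $-2\mu$ according as it passes through $0$, $1$ or $2$ of the parabolic points.

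For $E = L \oplus L^{-1}$ with $L \neq L_k$ the only degree $0$ subbundles are $L$ and $L^{-1}$, which are distinct because $L$ is not $2$-torsion (a degree $0$ subbundle maps isomorphically to one factor, and a nonzero map $L \to L^{-1}$ would force $L \cong L^{-1}$). Applying the computation above to each, the bundle is $\bmu$-stable exactly when no parabolic lies on $L$ or on $L^{-1}$, it is strictly $\bmu$-semistable as soon as a parabolic lies on one of them while never two on the same summand, and it is $\bmu$-unstable if both parabolics lie on a single summand. For the indecomposable bundle $E = E_0 \otimes L_k$, Theorem \ref{th:E1}(5) gives a unique inclusion $L_k \subset E_0 \otimes L_k$ of degree $0$, so $L_k$ is the sole candidate destabilizer; the same computation yields $\bmu$-stability when no parabolic lies on $L_k$, strict semistability when exactly one does, and instability when both do. These two cases reproduce the first items of each list.

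The remaining and most delicate case is $E = L_k \oplus L_k$. Here $\P E \cong C \times \P^1$ is trivial, so instead of finitely many degree $0$ subbundles there is a whole $\P^1$-family of them, each isomorphic to $L_k$, and every parabolic direction lies on exactly one member of this family. The relevant subbundle is therefore the one meeting the most parabolic points: if both parabolics lie on the same member we get $\text{ind}_{\bmu} = -2\mu < 0$ and the bundle is unstable, whereas if they lie on two distinct members then every subbundle of the family meets at most one parabolic, so the minimal value of $\text{ind}_{\bmu}$ is $0$ and the bundle is strictly $\bmu$-semistable. I expect this case to be the main obstacle, precisely because the destabilizing subbundles move in a positive-dimensional family; one must check that in the ``distinct members'' configuration no subbundle attains a negative index, which is exactly the observation that no single member can absorb both parabolic directions. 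The hypothesis $\bmu \neq 0$ is used throughout to keep the values $\pm 2\mu$ strictly nonzero, separating genuine stability from strict semistability. Collecting the three cases yields the stated lists.

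Finally, I note that the same classification can be obtained without recomputation by transporting Proposition \ref{prop:clasif} through the elementary isomorphism $\Elem_{t_2}^+$ of Section \ref{subsec:elem}: under $\bmu \mapsto \bmu_2 = (\mu_1, 1 - \mu_2)$ the odd-degree wall $\mu_1 + \mu_2 = 1$ is carried exactly onto the even-degree wall $W'$, and tracing the effect of $\elem_{t_2}^+$ on the positions of the parabolic directions sends the bundles $(E_1, \bm)$ and $L \oplus L^{-1}(\wi)$ of Proposition \ref{prop:clasif} to the three families listed here. The subtlety of this alternative route is the bookkeeping of how incidence of a parabolic with a subbundle changes under an elementary transformation together with the fixed-determinant twist $R$.
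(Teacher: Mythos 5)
Your argument is correct, but it is not the proof the paper gives. The paper's proof is a one-line transport: it applies the elementary transformation $\Elem^+_{t_2}$ to the list of Proposition \ref{prop:clasif} and observes that the dichotomy $L = L_k$ versus $L \neq L_k$ in even degree corresponds, in odd degree, to $E_1$ having one or two degree-$0$ subbundles through $m_2$ --- which is exactly the alternative route you only sketch in your final paragraph. Your main argument instead works directly in even degree: you force the underlying bundle to be semistable (a subbundle of degree $\geq 1$ has index at most $-2 + 2\mu < 0$), invoke the Atiyah--Tu classification to reduce to $L \oplus L^{-1}$, $L_k \oplus L_k$ and $E_0 \otimes L_k$, then enumerate the degree-$0$ subbundles in each case (the two factors when $L \not\cong L^{-1}$, the unique $L_k \subset E_0 \otimes L_k$ coming from Theorem \ref{th:E1}, and the $\P^1$-family of constant sections of $L_k \oplus L_k$) and read off the index $2\mu$, $0$ or $-2\mu$ according to how many parabolics the subbundle carries. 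What the paper's route buys is brevity and consistency with its declared strategy of doing all computations in odd degree, where $E_1$ is rigid and Proposition \ref{prop:2sec} controls the incidences; what your route buys is a self-contained verification that avoids the bookkeeping of how incidence transforms under $\elem^+_{t_2}$ and the choice of the twisting root $R$, and it isolates the one genuinely new phenomenon in even degree, namely that the destabilizing subbundles of $L_k \oplus L_k$ move in a positive-dimensional family. Both arguments are complete proofs of the statement.
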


\begin{proof}
  By applying the elementary transformation $\Elem^+_{t_2}$ to the bundles of Proposition \ref{prop:clasif}, we obtain the described bundles.
  The different cases $L = L_k$ and $L \not = L_k$ correspond in the odd degree setting to $E_1$ having one or two degree 0 bundles passing through $m_2$.
\end{proof} 

  Let us keep the notation $\Gamma$ for the strictly $\bmu$-semistable locus in $\BunOCT$. This locus is a curve parametrized by $\JacC$ by Theorem \ref{th:p1xp1wall}, and the description of the bundles represented by the points in $\Gamma$ is as follows:
\begin{theorem} \label{th:strict2}
  Let $0 < \mu_1 = \mu_2 < 1$. For each point $\mL$ in $\Gamma \subset \BunOCT$ there exists a unique $L \in \JacC$ such that the point $\mL$ represents the following three isomorphism classes of parabolic vector bundles:
\begin{itemize}
  \item If $L = L_k$ is a torsion line bundle:
  \begin{itemize}
    \item $\mF_<(L_k) =(E_0 \otimes L_k,  \bm)$ with $m_1$ on $L_k \subset E_0 \otimes L_k$ and $m_2$ not on $L_k$.
    \item $\mF_>(L_k) =(E_0 \otimes L_k,  \bm)$ with $m_2$ on $L_k \subset E_0 \otimes L_k$ and $m_1$ not on $L_k$
    \item $\mF_=(L_k) =(L_k \oplus L_k , \bm)$ with parabolic directions not lying on the same $L_k$.
  \end{itemize}
  
  \item If $L$ is not a torsion line bundle:
  \begin{itemize}
    \item $\mF_<(L) =(L \oplus L^{-1} , \bm)$ with $m_1 \in L$ and $m_2$ out of $L$ and $L^{-1}$.
    \item $\mF_>(L) =(L \oplus L^{-1} , \bm)$ with $m_2 \in L^{-1}$ and $m_1$ out of $L$ and $L^{-1}$.
    \item $\mF_=(L) =(L \oplus L^{-1} , \bm)$ with $m_1 \in L$ and $m_2$ in $L^{-1}$.
  \end{itemize}

\end{itemize}
\end{theorem}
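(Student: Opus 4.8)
The plan is to transport the odd-degree description of Theorem~\ref{th:strict} through the elementary isomorphism $\Elem^+_{t_2} = \elem^+_{t_2} \otimes R$, which identifies $\Buninf^=(C,T)$ with $\BunO^=(C,T)$ and, since it sends the wall $W$ onto the wall $W'$, restricts to an isomorphism $\Gamma \to \Gamma$. An isomorphism of moduli spaces carries $s$-equivalence classes to $s$-equivalence classes, so each point $\mL$ of $\Gamma \subset \BunOCT$ is the image of a unique point of $\Gamma \subset \Buninf^=(C,T)$; by Theorem~\ref{th:strict} the latter determines a unique $N \in \JacC$ with representatives $\mE_<(N)$, $\mE_>(N)$, $\mE_=(N)$. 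Setting $L := N \otimes \O(t_2) \otimes R$ produces the asserted line bundle, the assignment $N \mapsto L$ being a translation, hence a bijection, of $\JacC$. It then remains to compute the three images $\Elem^+_{t_2}(\mE_*(N))$ and to match them with the bundles $\mF_*(L)$.

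The computation uses only the transformation rules recalled in the Proposition on elementary transformations — a subbundle through $p_2$ is sent to its twist by $\O(t_2)$ and abandons $p_2'$, while a subbundle avoiding $p_2$ is sent to itself and acquires $p_2'$ — together with the determinant bookkeeping $R^2 = \O(-\wi - t_2)$, which guarantees that the image has trivial determinant (and, when it splits, that the two degree-$0$ summands are mutually inverse). For $\mE_=(N) = (N \oplus N^{-1}(\wi), \bp)$ with both directions on the summand $N$, transforming at $p_2 \in N$ sends $N$ to $N(t_2)$ (leaving $p_2'$) and fixes $N^{-1}(\wi)$ (acquiring $p_2'$); after the twist one obtains $L \oplus L^{-1}$ with $m_1 \in L$ and $m_2 \in L^{-1}$, that is $\mF_=(L)$. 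The cases $\mE_<(N)$ and $\mE_>(N)$ — for the latter choosing the representative with $p_1 \in N$ and $p_2$ a generic direction — are handled in the same way and give $\mF_<(L)$ and $\mF_>(L)$, with the roles of $m_1$ and $m_2$ exchanged between the two chambers, as the notation anticipates.

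The dichotomy between the two items of the statement is dictated by Proposition~\ref{prop:2sec}. The bundle underlying $\Elem^+_{t_2}(\mE_*(N))$ is decomposable exactly when $m_{t_2}(N)$ is a non-ramification point, and it degenerates to the indecomposable $E_0 \otimes L_k$ precisely at the four ramification directions, which are characterised by $N^2 = \O(\wi - t_2)$; a direct translation shows this is equivalent to $L$ being $2$-torsion. I would therefore separate the case $L$ non-torsion, where $L \neq L^{-1}$ and all three representatives have underlying bundle $L \oplus L^{-1}$, from the case $L = L_k$, where $\mE_<$ and $\mE_>$ produce $E_0 \otimes L_k$ — the elementary transformation fusing the two summands because the relevant direction is generic on $L_k \oplus L_k$ — while $\mE_=$ produces the split bundle $L_k \oplus L_k$ with the two directions lying on distinct copies of $L_k$.

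I expect the main obstacle to be the bookkeeping of the parabolic directions in the torsion case: one must check that the distinguished subbundle $L_k \subset E_0 \otimes L_k$ is the image of the correct odd-degree summand, so that $m_1 \in L_k$ for $\mF_<$ and $m_2 \in L_k$ for $\mF_>$ while the other direction lies off $L_k$, and that it is the choice of a representative of $\mE_>$ with $p_2$ generic — rather than on a summand — that realises the indecomposable bundle inside the prescribed $s$-class. Verifying that these superficially different representatives share a single graded bundle, and hence a single point of $\Gamma$, is exactly the content already secured by Proposition~\ref{prop:clasifpar}, which closes the argument.
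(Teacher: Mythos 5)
Your proposal follows essentially the same route as the paper: the paper also proves this by transporting Theorem \ref{th:strict} through $\Elem^+_{t_2} = \elem^+_{t_2}\otimes R$, setting $L = M(t_2+R)$ and checking $\Elem^+_{t_2}(\mE_*(M)) = \mF_*(L)$, with the torsion/non-torsion dichotomy coming from whether $m_2$ is a ramification direction of $m_{t_2}$ in the sense of Proposition \ref{prop:2sec}. Your version is if anything slightly more careful about the determinant normalisation $R^2=\O(-\wi-t_2)$ and the choice of representative for $\mE_>$, but the argument is the same.
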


\begin{proof}
  One way to prove this result is to translate the situation of Theorem \ref{th:strict2}, computing the images of the bundles $\mE_<(L)$, $\mE_>(L)$ and $\mE_=(L)$ in $\Buninf^=(C,T)$ by the elementary transformation $\Elem^+_{t_2}$.
	Let $M \in \JacC$ be an arbitrary line bundle and $L = M(t_2 + R)$, with $2R = -t_1 - \wi$. We claim that
\begin{align*}
  \Elem^+_{t_2} (\mE_*(M)) = \mF_*(L)
\end{align*}
for $* \in \{<,>,=\}$.
  Let us prove the assertion for the first case. The bundle $\mE_<(M)=(E_1, \bm)$ has parabolic directions $\bm$ lying on $M \subset E_1$. If $L = L_k$, Proposition \ref{th:strict2} states that $M$ is the unique degree 0 subbundle of $E_1$ passing through $m_2$ . This implies that the underlying bundle of $\Elem^+_{t_2} (\mE_k(M))$ is indecomposable, and thus equal to $E_0 \otimes L_k$. 
  If $L \not = L_k$, the underlying bundle of $\Elem^+_{t_2} (\mE_k(M))$ is decomposable, therefore it is $L \oplus L^{-1}$. The position of parabolic directions is given by the configuration in $\mE_<(M)$. For $k > 1$, the proof is similar.
\end{proof}

    Alternatively, one can also show that strictly semistable configurations in the even degree case are given by Jordan-Hölder filtrations 
\begin{align*}
  0 \subset (L, m_1') \subset &\mF_*(L) \qquad \text{for } * \in \{<,>\} \\
  0 \subset (L^{-1}, m_2') \subset &\mF_=(L) 
\end{align*}
where $(L, m_1')$ (resp. $(L, m_2')$ is the line subbundle over $(C, t_1)$ (resp. $(C, t_2)$). These give isomorphic graded bundles. Therefore, $\mF_<(L)$, and $\mF_>(L)$ are identified in the moduli space.
  Moving weights outside the wall, we find the same situation as in the odd degree case: some of the strictly semistable bundles $\mF_*(L)$ become stable, while some become unstable. 

\begin{proposition} \label{prop:strict2}
  Let $\bmu$ be weights outside the wall and $[E, \bm] \in \BunOCT \setminus \Gamma$. Then, one of the following holds:
\begin{itemize}
  \item $E = L \oplus L^{-1}$, with $L \not = L_k$, and parabolic directions outside $L$ and $L^{-1}$.
  \item $E = E_0 \otimes L_k $, with parabolic directions outside $L_k$.
\end{itemize}   
  Each point $\mL \in \Gamma$ represents:
\begin{itemize}
    \item The bundle $\mF_<(L)$ if $\bmu \in J_<$.
    \item The bundle $\mF_>(L)$ if $\bmu \in J_>$.
\end{itemize}
\end{proposition}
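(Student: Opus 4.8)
The plan is to deduce Proposition~\ref{prop:strict2} from its odd-degree counterpart, Proposition~\ref{prop:outsideodd}, by transporting the latter through the elementary isomorphism $\Elem_{t_2}^+ : \Buninf^{\bnu}(C,T) \to \BunO^{\bmu}(C,T)$ with $\bmu = (\nu_1, 1-\nu_2)$, exactly as was done for Theorem~\ref{th:strict2} and Proposition~\ref{prop:clasifpar}. The involution $\nu_2 \mapsto 1-\nu_2$ turns the odd-degree inequality $\nu_1 + \nu_2 < 1$ into $\nu_1 < 1 - \nu_2 = \mu_2$, that is, into the defining condition $\mu_1 < \mu_2$ of $J_<$; likewise it matches $I_>$ with $J_>$ and the wall $W$ with the wall $W'$. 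Since $\Gamma$ is the intrinsic strictly-semistable locus on the wall in either degree, $\Elem_{t_2}^+$ restricts to each chamber and carries the odd-degree $\Gamma$ onto the even-degree $\Gamma$.

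For a point outside $\Gamma$, Proposition~\ref{prop:outsideodd} says it represents a $\bnu$-stable bundle $(E_1,\bm)$ whose two directions do not lie on a common degree-$0$ subbundle. I would push this bundle through $\Elem_{t_2}^+$ and read off the underlying bundle using Proposition~\ref{prop:2sec}: the image is decomposable, of the form $L \oplus L^{-1}$ with $L \neq L_k$, exactly when $m_2$ is a non-ramification value of $m_{t_2}$ (two distinct subbundles through $m_2$), and is the indecomposable $E_0 \otimes L_k$ exactly when $m_2$ is one of the four ramification values (a single subbundle $L_k$ through $m_2$); this is the dichotomy already used in the proof of Proposition~\ref{prop:clasifpar}. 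Tracking the directions with the properties of $\Elem_{t_2}^+$ recorded in Section~\ref{subsec:elem}, every subbundle through $m_2$ is sent to a summand avoiding the new direction at $t_2$, while the direction at $t_1$ is untouched; the hypothesis that $\bm$ shares no common subbundle then forces both directions to lie off $L$ and $L^{-1}$ (resp. off $L_k$), which is the first assertion.

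For a point on $\Gamma$, Theorem~\ref{th:strict2} already established $\Elem_{t_2}^+(\mE_*(M)) = \mF_*(L)$ for $* \in \{<,>,=\}$ and $L = M(t_2+R)$. Combined with Proposition~\ref{prop:outsideodd}, a point $\mL \in \Gamma$ representing $\mE_<(M)$ for $\bnu \in I_<$ is carried to the point representing $\mF_<(L)$, whose weights $\bmu$ lie in $J_<$; the symmetric argument in $I_>$ yields $\mF_>(L)$ for $\bmu \in J_>$. This is exactly the second assertion.

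I expect the only delicate point to be the bookkeeping in the second paragraph: one must check that, after the transformation, \emph{both} parabolic directions genuinely avoid the summands $L, L^{-1}$ (or the subbundle $L_k$), and that the decomposable/indecomposable alternative is governed solely by whether $m_2$ meets the ramification locus of $m_{t_2}$. Should one prefer to avoid transporting through $\Elem_{t_2}^+$, the statement on $\Gamma$ can be proved directly, mirroring the one-line computation behind Proposition~\ref{prop:outsideodd}: for $\mF_<(L)$ and $\mF_>(L)$ one computes the parabolic $\bmu$-degree of the destabilizing subbundle ($L$, resp. $L^{-1}$) and verifies that it is strictly positive in the relevant chamber and strictly negative in the opposite one.
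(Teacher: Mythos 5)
Your proof is correct and follows the same route the paper intends: Proposition \ref{prop:strict2} is stated there without a separate proof, as an immediate consequence of transporting Proposition \ref{prop:outsideodd} through $\Elem^+_{t_2}$ (the mechanism already used for Proposition \ref{prop:clasifpar} and Theorem \ref{th:strict2}), with exactly the chamber matching $I_< \leftrightarrow J_<$, $I_> \leftrightarrow J_>$ that you verify. Your closing alternative --- computing the parabolic $\bmu$-degree of the destabilizing subbundle of $\mF_<(L)$ and $\mF_>(L)$ in each chamber --- is precisely the computation the paper carries out in the odd-degree case.
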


  We follow the notation of the previous section: for $\bmu$ in $J_<$ (resp. in $J_>$), we will denote by $\BunO^< (C,T)$ (resp. $\BunO^> (C,T)$) the moduli space $\BunO^{\bmu} (C,T)$.
  
  \begin{table}[h]
\begin{center}
\begingroup%
  \makeatletter%
  \providecommand\color[2][]{%
    \errmessage{(Inkscape) Color is used for the text in Inkscape, but the package 'color.sty' is not loaded}%
    \renewcommand\color[2][]{}%
  }%
  \providecommand\transparent[1]{%
    \errmessage{(Inkscape) Transparency is used (non-zero) for the text in Inkscape, but the package 'transparent.sty' is not loaded}%
    \renewcommand\transparent[1]{}%
  }%
  \providecommand\rotatebox[2]{#2}%
  \ifx\svgwidth\undefined%
    \setlength{\unitlength}{439.75460815bp}%
    \ifx\svgscale\undefined%
      \relax%
    \else%
      \setlength{\unitlength}{\unitlength * \real{\svgscale}}%
    \fi%
  \else%
    \setlength{\unitlength}{\svgwidth}%
  \fi%
  \global\let\svgwidth\undefined%
  \global\let\svgscale\undefined%
  \makeatother%
  \begin{picture}(1,0.69166653)%
    \put(0,0){\includegraphics[width=\unitlength]{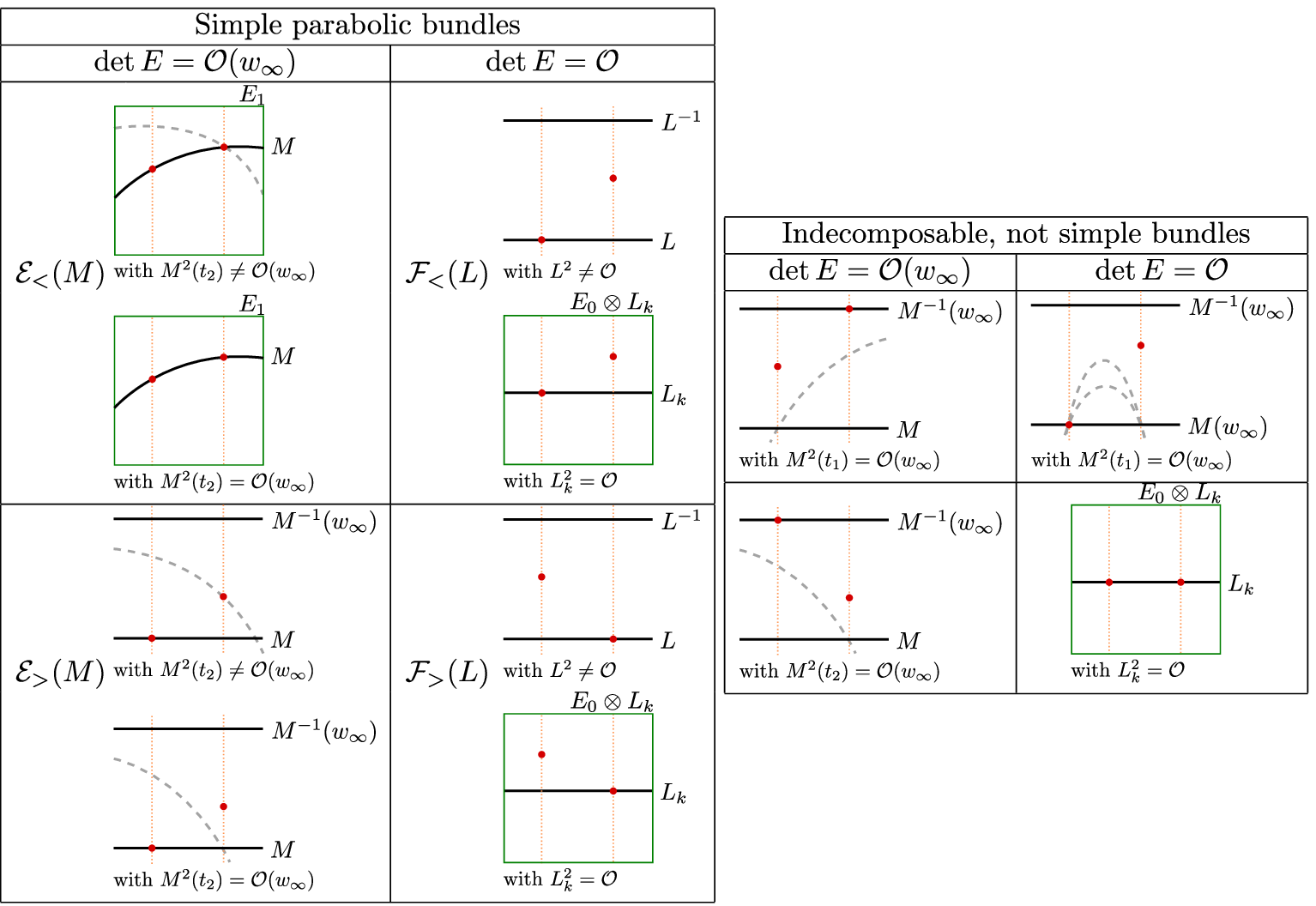}}%
  \end{picture}%
\endgroup%

\end{center}
\caption{Lists of simple and indecomposable parabolic bundles over $(C,T)$. Each figure represent the corresponding projective bundle, and each parabolic bundles on the right side a obtained by applying the elementary transformation $\Elem^+_{t_2}$ to the corresponding bundle on the left side.}
\label{table:bundles}
\end{table}

\subsection{Coordinate systems}
  In this Section we define coordinate systems for the moduli spaces $\Bun_L^=(C,T)$. These coordinate systems will be automatically defined on $\Bun_L^<(C,T)$ and $\Bun_L^>(C,T)$ since every bundle represented in these spaces is already in $\Bun_L^=(C,T)$.
  We will denote by $\P^1_z$ (resp. $\P^1_w$) the projective line with homogeneous coordinates $z = (z_0 : z_1)$ (resp. $w = (w_0 : w_1)$).
\paragraph{The coordinate system for $\BunO^=(C,T)$} \label{sub:coordinatepair}
  We start by defining an automorphism $\phi_T$ of $\BunO^=(C,T)$. 

  Let $\mP = [E, \l]$ be an element of $\BunO^=(C,T)$. 
  The bundle $\elem^+_T(E, \l)$ has determinant $\O(2 \wi)$. Define the bundle $\phi_T(E, \bm) := \elem^+_T(E, \l) \otimes \O(-\wi)$, with trivial determinant. This bundle is an element of $\BunO^=(C,T)$, and it does not depend on the representative bundle of $\mP$. We thus obtain a 1:1 automorphism
\begin{align*}
  \phi_T : \BunO^=(C,T) \to \BunO^=(C,T)
\end{align*}

  We define the coordinate system on $\BunO^= (C,T)$ as follows. Consider the moduli space $\BunO(C,T)$ defined in Section \ref{sec:tu}: by composing the map $\phi_T$ with the forgetful and Tu maps we obtain the diagram
\begin{equation*}
\begin{CD}
  \BunO^= (C,T)               @>\phi_T>>      \BunO^= (C,T)           \\
  @V\Forget VV                      @VV\Forget V  \\
  \BunO(C)             @.              \BunO(C)         \\
  @V{\Tu}V\cong V                     @V\cong V{\Tu}V \\
  \P ^1_z           @.              \P^1_w        \\
\end{CD}
\end{equation*}
  The \emph{coordinate map} in $\BunO^= (C,T)$ is the mapping
\begin{equation*}
  \BunO^= (C,T)  \xrightarrow{\varepsilon} \PP
\end{equation*}
defined by $\varepsilon = ({\Tu} \circ \Forget) \times ({\Tu} \circ \Forget \circ \phi_T)$. 
  We will prove the following Proposition using the coordinate system in the odd degree case:
\begin{proposition} \label{prop:1:1}
The mapping $\varepsilon$ is 1:1.
\end{proposition}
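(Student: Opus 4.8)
The plan is to transport the question to the odd-degree moduli space $\Buninf^=(C,T)\cong\P^1_{m_1}\times\P^1_{m_2}$ via the elementary isomorphism $\Elem^+_{t_2}$, where every bundle has underlying vector bundle $E_1$ and the coordinates $m_i$ are literally the parabolic directions in the fibres $E_1|_{t_i}$. I would compute each of the two factors of $\varepsilon$ in these coordinates and show that the first depends only on $m_2$, the second only on $m_1$, and that each is a bijection of the relevant $\P^1$. This exhibits $\varepsilon$ as a product of two bijections (after swapping the factors), hence as a $1{:}1$ map.

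For the first coordinate $z=\Tu\circ\Forget$: writing $\mP=\Elem^+_{t_2}(E_1,\bm)=\elem^+_{t_2}(E_1,\bm)\otimes R$, the underlying bundle of $\mP$ is given by Proposition \ref{prop:2sec}. If $L=\O(q-\wi)$ is one of the two degree-$0$ subbundles of $E_1$ meeting the direction $m_2$, then the transform splits as $\bigl(\O(t_2+q-\wi)\oplus\O(t_2+\i_{t_2}(q)-\wi)\bigr)\otimes R$, which depends on $m_2$ alone (through the $2{:}1$ cover $m_{t_2}$ of Proposition \ref{prop:2sec}) and not on $m_1$. The crucial point is that the normalization forcing $\det\mP=\O$ makes $R$ of degree $-1$ with $R^2=\O(-\wi-t_2)$; a short linear-equivalence computation then shows the two summands of $\mP$ are mutually inverse line bundles. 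Hence the unordered pair $\{L_0,L_0^{-1}\}$ they define is exactly a fibre of the Tu cover $\JacC\to\P^1_z$ of Theorem \ref{prop:Tu}, and the two points of $m_{t_2}^{-1}(m_2)$ map to it. Comparing degrees, the induced map $\P^1_{m_2}\to\P^1_z$ relating the $2{:}1$ cover $m_{t_2}$ to the $2{:}1$ Tu cover must have degree $1$, so $m_2\mapsto z$ is a bijection.

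For the second coordinate $w=\Tu\circ\Forget\circ\phi_T$: using $\phi_T=\elem^+_{t_1}\circ\elem^+_{t_2}\otimes\O(-\wi)$ together with $\elem^+_{t_2}\circ\elem^+_{t_2}=\otimes\,\O(t_2)$, I would rewrite $\phi_T(\mP)=\elem^+_{t_1}(E_1,\bm)\otimes N$ with $N=\O(t_2)\otimes R\otimes\O(-\wi)$ a fixed line bundle. Its underlying bundle then depends only on $m_1$, again by Proposition \ref{prop:2sec} applied at $t_1$. The same determinant bookkeeping (using $t_1+t_2\sim 2\wi$) shows $\det\phi_T(\mP)=\O$ and that the two summands are once more mutually inverse, so by the identical argument $m_1\mapsto w$ is a bijection. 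Consequently $\varepsilon$ reads $(m_1,m_2)\mapsto(z(m_2),w(m_1))$ in odd-degree coordinates and is $1{:}1$.

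The step I expect to be the main obstacle is making the two $2{:}1$ covers line up: one must verify that the involution $L\mapsto\i_{t_i}$-conjugate on $\JacC$ (whose orbits are the fibres of $m_{t_i}$) is carried, by the transform-and-twist, to the hyperelliptic involution $L_0\mapsto L_0^{-1}$ (whose orbits are the fibres of Tu). I expect this to follow automatically from the trivial-determinant normalization of $R$ rather than being an extra hypothesis, precisely because the product of the two summands is computed to be $\O$. A secondary point requiring care is the behaviour over the torsion locus, where the Tu map distinguishes $[E_0\otimes L_k]$ from $[L_k\oplus L_k]$: one checks that $\varepsilon$ stays injective there, so that the bijection holds on all of $\BunO^=(C,T)$ and not merely generically.
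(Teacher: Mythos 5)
Your argument is correct and follows essentially the same route as the paper: transport to the odd-degree model $\Buninf^=(C,T)\cong\P^1_{m_1}\times\P^1_{m_2}$ by an elementary isomorphism, use Proposition \ref{prop:2sec} to see that each factor of $\varepsilon$ depends on a single $m_i$ through the pair of degree-$0$ subbundles of $E_1$ meeting that direction, check via the determinant normalization that this pair becomes a Tu fibre $\{L_0,L_0^{-1}\}$, and conclude that each factor is a bijection of $\P^1$. The only (immaterial) difference is that you identify the even- and odd-degree spaces via $\Elem^+_{t_2}$ rather than $\elem^+_{t_1}\otimes\O(R)$, which swaps which coordinate depends on which $m_i$; your explicit linear-equivalence bookkeeping fills in details the paper leaves implicit.
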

\paragraph{The coordinate system for $\Buninf^=(C,T)$} \label{sub:coordinateimpair}
  Recall that $T = t_1 + t_2$. Let us write $t_1 = (t, s)$ and $t_2 = (t, -s)$, where $C = \{y^2 = x (x - 1) (x - \lambda)\}$. 
  Consider the maps $\varepsilon_1  : C \to \P^1_{\tilde{z}}$ and $\varepsilon_2  : C \to \P^1_{\tilde{w}}$ defined by
\begin{equation*}
  \varepsilon_1 (p) = \frac{t y - s x}{y - s}, \
  \varepsilon_2 (p) = \frac{t y + s x}{y + s}.
\end{equation*}

  A straightforward calculation shows that, for $j = 1,2$, the maps $\varepsilon_j$ are 2:1 with preimages of the form $\{p, \i_{t_j}(p)\}$, where $\i$ is the involution defined in Section \ref{sec:torsion}. Moreover, $\varepsilon_i(w_k) = k$ for $k \in \{0, 1, \lambda\}$ since $w_k$ has coordinates $(0,k)$.
  In particular, for $k \in \{0, 1, \lambda, \infty \}$, we have that the image of the point $\mathcal{L}_k = [\mE_*(L_k)]$ by $\varepsilon$ is the point $(k, k) \in \P^1_{\tilde{z}} \times \P^1_{\tilde{w}}$.

  Now we are in position to define our coordinate system $\varepsilon$ on $\Buninf^=(C,T)$. Let $\mP$ be a point in $\Buninf^=(C,T)$. By theorem \ref{th:strict}, $\mP$ represents a unique bundle of the form $(E_1, \bm)$. By Proposition \ref{prop:2sec}, the parabolic direction $m_1$ (resp. $m_2$) corresponds to a pair $\{p, i_{t_1}(p)\}$ (resp. $\{q, i_{t_2}(q)\}$) for $p$, $q \in C$. 
  The \emph{coordinate map} in $\Buninf^=(C,T)$ is the mapping
\begin{equation*}
  \varepsilon: \Buninf^=(C,T) \to \P^1_{\tilde{z}} \times \P^1_{\tilde{w}}
\end{equation*}
defined by $\varepsilon(E_1,\bm) = (\varepsilon_1(p), \varepsilon_2(q))$.

\begin{proposition}
  The coordinate map $\varepsilon: \Buninf^=(C,T) \to \P^1_{\tilde{z}} \times \P^1_{\tilde{w}}$ is 1:1.
\end{proposition}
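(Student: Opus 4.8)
The plan is to exhibit $\varepsilon$ as a composition of bijections, by recognizing that the parabolic-direction coordinate on each factor and the map $\varepsilon_j$ realize one and the same quotient of $C$ by an involution. First I would invoke the structure already available: by Theorem \ref{th:strict} every point $\mP \in \Buninf^=(C,T)$ has a unique representative of the form $(E_1, \bm)$, and by Theorem \ref{th:p1xp1wall} the assignment $(m_1, m_2) \mapsto [E_1, \bm]$ is an isomorphism $\P^1_{m_1} \times \P^1_{m_2} \xrightarrow{\cong} \Buninf^=(C,T)$. It therefore suffices to prove that the composite $\P^1_{m_1} \times \P^1_{m_2} \to \P^1_{\tilde{z}} \times \P^1_{\tilde{w}}$ is bijective, and since everything in sight is a product this can be checked one factor at a time.

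Next I would compare the two descriptions of a single parabolic direction. Fixing the base point $t_j$, Proposition \ref{prop:2sec} tells us that $m_{t_j} : \JacC \to \P^1_{m_j}$ is 2:1 with fiber over $m_j$ the pair $\{\O(p - \wi), \O(\i_{t_j}(p) - \wi)\}$; under the identification $\JacC \cong C$ sending $L = \O(p - \wi) \leftrightarrow p$, its fibers are exactly the $\i_{t_j}$-orbits in $C$, the ramification occurring precisely at the fixed points of $\i_{t_j}$. On the other hand, by its very construction $\varepsilon_j : C \to \P^1_{\tilde{z}}$ (resp. $\P^1_{\tilde{w}}$) is also 2:1 with fibers the $\i_{t_j}$-orbits, ramified at the same fixed points. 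Hence both maps factor through the common quotient $C \to C/\i_{t_j} \cong \P^1$ and induce canonical isomorphisms $\P^1_{m_j} \cong C/\i_{t_j} \cong \P^1_{\tilde{z}}$ (resp. $\P^1_{\tilde{w}}$).

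Finally I would assemble these identifications. For injectivity, if $\varepsilon(\mP) = \varepsilon(\mP')$ with representatives $(E_1, (m_1, m_2))$ and $(E_1, (m_1', m_2'))$ and associated points $p, q$ and $p', q'$, then $\varepsilon_1(p) = \varepsilon_1(p')$ forces $p$ and $p'$ into the same $\i_{t_1}$-orbit, whence $m_1 = m_1'$ by the fiber description of $m_{t_1}$; likewise $m_2 = m_2'$, so $\mP = \mP'$ by injectivity of $M$. For surjectivity, given $(\tilde{z}, \tilde{w})$ one chooses $p \in \varepsilon_1^{-1}(\tilde{z})$ and $q \in \varepsilon_2^{-1}(\tilde{w})$ and sets $m_1 = m_{t_1}(\O(p - \wi))$, $m_2 = m_{t_2}(\O(q - \wi))$; then $[E_1, (m_1, m_2)]$ maps to $(\tilde{z}, \tilde{w})$. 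The one step demanding genuine care is the matching of fibers: I must confirm that the pairs produced by Proposition \ref{prop:2sec} coincide with the fibers of $\varepsilon_j$, that is, that both are $\i_{t_j}$-orbits, including at the ramification locus where each orbit degenerates to a single fixed point and both maps ramify. Once this identification is secured, the bijectivity of $\varepsilon$ is purely formal.
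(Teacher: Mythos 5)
Your proof is correct and follows essentially the same route as the paper: both arguments rest on identifying the fibers of the parabolic-direction parametrization from Proposition \ref{prop:2sec} with the fibers of $\varepsilon_j$ as $\i_{t_j}$-orbits in $C$, so that each coordinate pair determines a unique configuration $(E_1,\bm)$ and hence a unique moduli point. The paper states this in one sentence; you have simply made the factor-by-factor matching of the two quotients of $C$ explicit.
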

\begin{proof}
  Each parabolic configuration $\bm = (\varepsilon_1(p), \varepsilon_2(q))$ defines a single parabolic bundle $(E_1, \bm)$, and thus a single element $[E_1, \bm]$ of $\Buninf^=(C,T)$.
\end{proof}
\begin{proof}[Proof of Proposition \ref{prop:1:1}]
    Let $R = \OC(r - \wi)$ be a divisor such that $2R = \wi - t_1 = t_2 - \wi$. Then, the following diagram is commutative:
\begin{equation*}
\begin{tikzcd}[column sep = small]
 & \Buninf^=(C,T) \arrow{rd}{\elem^+_{t_2} \otimes \O(R + \wi)}     &  \\ 
\BunO^=(C,T) \arrow{rr}{\phi_T} \arrow{ru}{\elem^+_{t_1} \otimes \O(R)} \arrow{d}{\Tu \circ \Forget} & & \BunO^=(C,T) \arrow{d}{\Tu \circ \Forget}\\
\P^1_z &          & \P^1_w 
\end{tikzcd}
\end{equation*}
  Elements of $\Buninf^=(C,T)$ are of the form $[E_1, \bm = (m_1,m_2)]$. Fix the second parabolic direction $m_2$. For each $m_1 = \varepsilon_1(p)$, we define a point $z(m_1) \in \P^1_z$ as follows: 
\begin{align*}
  z(m_1) := \left (\Tu \circ \Forget \circ \elem^-_{t_1} \otimes \O(-R) \right) [E_1, \bm] \\
= \Tu \left( \O(p - r) \oplus \O(\i_{t_1}(p) -r) \right). 
\end{align*}
  We see that the map $m_1 \mapsto z(m_1)$ is bijective. We also define $w(m_2)$ fixing this time $m_1$, the map $m_2 \mapsto w(m_2)$ is also bijective. Since the map $\elem^+_{t_1} \otimes \O(R)$ is an isomorphism of moduli spaces, the coordinate map 
\begin{align*}
  \varepsilon: \BunO^=(C,T) \to \Buninf^=(C,T)
\end{align*}
is 1:1.

\end{proof}

\paragraph{The relation between even and odd coordinates}

  Here we describe the coordinate change between even and odd degree moduli spaces. Remark that we have seen in the proof of Proposition \ref{prop:1:1} that this coordinate change is not canonical, since we have to choose a root of $t_1$. Let $R = \O(r - \wi)$ be a divisor such that $2R = \O(t_1 - \wi)$. The elementary map
\begin{align*}
  \BunO^=(C,T) \xrightarrow{\elem_{t_1}^+ \otimes \O(R)} \Buninf^=(C,T)
\end{align*}
yields a coordinate transformation $\theta: \PP \to \P^1_{\tilde{z}} \times \P^1_{\tilde{w}}$ satisfying 
\begin{align*}
  \theta \circ \varepsilon = \varepsilon \circ \left( \elem_{t_1}^+ \otimes \O(R) \right).
\end{align*}
 For $k \in \Delta = \{0,1,\lambda,\infty\}$, define the points $p_k := \i_{w_k}(r)$ and $q_k := \i_{w_{\infty}} (p_k)$.

\begin{proposition}
  The map $\theta$ is given by $\theta = \theta_1 \times \theta_2$, where $\theta_1$ and $\theta_2$ are the unique automorphisms of $\P^1$ satisfying
\begin{align*}
  \theta_1(\pi(p_k)) = \theta_2(\pi(q_k)) = k  
\end{align*}
for every $k \in \Delta$.
\end{proposition}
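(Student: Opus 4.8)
The map $\theta$ is defined by the relation $\theta\circ\varepsilon=\varepsilon\circ(\elem^+_{t_1}\otimes\O(R))$, so the plan is to read it off by transporting a few well-understood points through this elementary isomorphism. First I would record that $\theta$ respects the product structure, i.e. $\theta=\theta_1\times\theta_2$ with $\theta_i\in\Aut(\P^1)$. This is already built into the commutative diagram of the proof of Proposition \ref{prop:1:1}: its left branch $\elem^+_{t_1}\otimes\O(R)$ recovers the first even coordinate $z=\Tu\circ\Forget$ from the first odd coordinate alone (this is exactly the bijective computation of $z(m_1)$ there), while its right branch $\elem^+_{t_2}\otimes\O(R+\wi)$, applied to $\phi_T(P)$, recovers the second even coordinate $w=\Tu\circ\Forget\circ\phi_T$ from the second odd coordinate alone. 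Each of these coordinate changes is a bijection $\P^1\to\P^1$, hence an automorphism, which yields $\theta=\theta_1\times\theta_2$. Since a Möbius transformation is determined by its values on three points, it then suffices to evaluate $\theta_1$ and $\theta_2$ on the four torsion points $\mathcal{L}_k$, $k\in\Delta$.

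The heart of the argument is to compute the even coordinates of the preimages $P_k:=(\elem^+_{t_1}\otimes\O(R))^{-1}(\mathcal{L}_k)$; I claim $\varepsilon(P_k)=(\pi(p_k),\pi(q_k))$. For the first coordinate I would take the representative $\mE_<(L_k)=(E_1,\bm)$ with both directions on $L_k=\O(w_k-\wi)$ and apply $\elem^-_{t_1}\otimes\O(-R)$. By Proposition \ref{prop:2sec} the two degree-$0$ subbundles through the first parabolic direction are $\O(w_k-\wi)$ and $\O(\i_{t_1}(w_k)-\wi)$; since both pass through $m_1$, the elementary transformation produces the decomposable bundle $\O(w_k-r)\oplus\O(\i_{t_1}(w_k)-r)$. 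Recalling from the proof of Proposition \ref{prop:1:1} that $2R=\O(\wi-t_1)$, i.e. $2r\sim 3\wi-t_1$, this bundle is of the form $M\oplus M^{-1}$ with $M=\O(w_k-r)$, so $\Tu\circ\Forget(P_k)=\pi(u)$ with $u\sim w_k+\wi-r$. Finally $u\sim p_k$ because $p_k\sim 3\wi-r-w_k$ and $2w_k\sim 2\wi$ (as $L_k^2=\O$), whence the first coordinate is $\pi(p_k)$. The second coordinate is obtained by the symmetric computation at $t_2$ along the right branch of the diagram, yielding $\pi(q_k)$; note that $q_k=\i_{\wi}(p_k)$, so in fact $\pi(q_k)=\pi(p_k)$ and $P_k$ lies on the diagonal, as it must for the image of a torsion bundle.

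With this computation in hand I would conclude by evaluating the defining relation at $P_k$: since $\varepsilon(\mathcal{L}_k)=(k,k)$, we obtain $\theta(\pi(p_k),\pi(q_k))=(k,k)$, that is $\theta_1(\pi(p_k))=k$ and $\theta_2(\pi(q_k))=k$ for every $k\in\Delta$. Uniqueness is then immediate, three of the conditions already pinning down each Möbius transformation, and consistency of the fourth condition is automatic since we have exhibited honest automorphisms $\theta_1,\theta_2$ satisfying all four. The step I expect to be the main obstacle is the divisor-class bookkeeping of the middle paragraph, and especially carrying the $t_2$-computation through the twist $\phi_T$: one must match the auxiliary square root governing the second factor with $r$, using $t_1+t_2\sim 2\wi$ and $q_k=\i_{\wi}(p_k)$, so as to be certain that the two branches of the diagram produce the coherent pair $(\pi(p_k),\pi(q_k))$ rather than points differing by a $2$-torsion translation.
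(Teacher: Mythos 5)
Your proof is correct and follows essentially the same route as the paper: both exploit the defining relation $\theta\circ\varepsilon = \varepsilon\circ(\elem^+_{t_1}\otimes\O(R))$, transport distinguished points through the elementary transformation via the divisor identities $2r\sim 3\wi - t_1$ and $2w_k\sim 2\wi$, and conclude by the three-point rigidity of Möbius transformations. The only organizational difference is that the paper pushes the points $\varepsilon^{-1}(\pi(p_k),\pi(q_l))$ forward, letting the independent variation of $k$ and $l$ give the product structure, whereas you pull the four diagonal torsion points $\mathcal{L}_k$ back and justify $\theta=\theta_1\times\theta_2$ separately from the diagram in the proof of Proposition \ref{prop:1:1}.
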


\begin{proof}
  Let $k,l \in \Delta$ and $[L \oplus L^{-1}, \bm] = \varepsilon^{-1} (\pi(p_k),\pi(q_l))$. In the generic situation, there exists $M$ in $\JacC$ such that $\bm$ is defined by the intersection of unique subbundles $M(-\wi)$ and $M^{-1}(-\wi)$ of $L \oplus L^{-1}$. By the definition of $\varepsilon$, we have that $L = \O(p_k - \wi)$ and $M = \O(q_l - \wi)$. 

  The image of $[L \oplus L^{-1}, \bm]$ by $\elem_{t_1}^+ \otimes \O(R)$ is a parabolic bundle $[E_1, \bn = (n_1, n_2)]$, where $n_1$ is the intersection of $L(R)$ and $L^{-1}(R)$ and $n_2$ is the intersection of $M(-R)$ and $M^{-1}(-R)$. By our choice of $p_k$ and $q_l$, we have also that $L(R) = \O(w_k - \wi)$ and $M(-R) = \O(w_l - \wi)$. By the definition of $\varepsilon$, we have that $\varepsilon[E_1, \bn] = (k,l)$.

  We can repeat the arguments in this proof fixing $L$ and varying $M$, or viceversa. In particular, $\theta = \theta_1 \times \theta_2$.
\end{proof}

\section{The group of automorphisms and the Torelli result} \label{sec:aut}

  We are now interested in simple parabolic bundles over $(C,T)$. The following Proposition:
\begin{proposition}
  Let $(E,\bp)$ be a rank 2 parabolic bundle over $(C,T)$. Then $(E,\bp)$ is simple if, and only if, it is $\bmu$-stable for some $\bmu$.
\end{proposition}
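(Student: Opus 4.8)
The plan is to prove the two implications separately. The implication that $\bmu$-stability implies simplicity is the general principle that a stable object admits only scalar endomorphisms, and I would establish it by the classical eigenvalue argument. Given a parabolic endomorphism $\phi$ of $(E,\bp)$, choose an eigenvalue $c$ of $\phi$ on some fibre $E_x$ and set $\psi = \phi - c\cdot\mathrm{id}$, which is again a parabolic endomorphism but is not invertible. If $\psi \neq 0$ it has generic rank $1$, so its saturated image $I$ is at the same time a parabolic sub-line-bundle of $(E,\bp)$ and, through the coimage, a parabolic quotient line bundle. Then $\bmu$-stability forces $\mathrm{slope}(I) < \mathrm{slope}(\mE) < \mathrm{slope}(\mathrm{coim}\,\psi) \le \mathrm{slope}(I)$, a contradiction. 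Hence $\psi = 0$ and $\phi$ is scalar, so $(E,\bp)$ is simple.

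For the converse I would reduce, using the twist automorphism and the elementary isomorphism $\Elem^+_{t_2}$ recalled in Section \ref{subsec:elem}, to the odd-degree determinant $\O(\wi)$, where Atiyah's classification (Theorem \ref{th:E1}) forces the underlying bundle $E$ to be either the indecomposable $E_1$ or a decomposable $L \oplus L^{-1}(\wi)$ with $\deg L = 0$. Since a simple parabolic bundle is in particular indecomposable as a parabolic bundle, I would then, for each possible underlying $E$, compute $\Aut(E)$ together with the stabiliser of the parabolic datum $\bm$, characterise exactly those $\bm$ for which $(E,\bm)$ is simple, and finally exhibit a weight for which it is $\bmu$-stable. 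The target list is already available: the $\bmu$-stable bundles were enumerated in Propositions \ref{prop:clasif} and \ref{prop:outsideodd} (odd degree) and Propositions \ref{prop:clasifpar} and \ref{prop:strict2} (even degree), and organised in Table \ref{table:bundles}.

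Concretely, for $E = E_1$ one has $\Aut(E_1) = \mathbb{C}^{\ast}$ by Theorem \ref{th:E1}(3), so every $(E_1,\bm)$ is automatically simple; since every sub-line-bundle of $E_1$ has degree at most $0$, the stability inequalities $\text{ind}_{\bmu}(\mE,L) > 0$ are satisfiable, and one reads off a suitable chamber (for instance $I_<$ when a single degree-$0$ subbundle carries both directions, cf.\ Proposition \ref{prop:2sec}). The substantial case is the decomposable $E = L \oplus L^{-1}(\wi)$: here $\Hom(L, L^{-1}(\wi)) = H^0(L^{-2}(\wi))$ is one-dimensional, so $\Aut(E)/\mathbb{C}^{\ast}$ is the two-dimensional non-abelian group $\mathbb{C}^{\ast} \ltimes \mathbb{C}$ generated by the torus and the unipotent part coming from this nonzero homomorphism. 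I would show that the stabiliser of $\bm$ is trivial precisely when no degree-$0$ sub-line-bundle passes through both parabolic directions and neither direction lies on the maximal subbundle $L^{-1}(\wi)$; since the degree-$0$ subbundles are exactly the graphs of the scalar multiples of the generating homomorphism, this is a transparent linear condition on $\bm$. That condition is exactly the one defining the bundles $\mE_>(L)$ on $\Gamma$, and computing $\text{ind}_{\bmu}$ for $L^{-1}(\wi)$ and for each degree-$0$ subbundle shows that such a bundle is $\bmu$-stable for $\bmu \in I_>$.

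The main obstacle is precisely this decomposable case, where the positive-dimensional, non-reductive automorphism group makes both the simplicity test and the stability test delicate: one must enumerate all sub-line-bundles of $L \oplus L^{-1}(\wi)$ of maximal and submaximal degree and check that the single geometric condition ``no common line subbundle through both parabolic points'' is at once equivalent to simplicity and to the non-emptiness of a stability chamber. Once this equivalence is in place, together with the analogous but easier computation for the indecomposable even-degree bundle $E_0 \otimes L_k$ (whose unipotent automorphism group fixes the unique subbundle $L_k$, so that the stabiliser of $\bm$ is trivial iff not both directions lie on $L_k$), the set of simple bundles matches term by term the entries of Table \ref{table:bundles}, and the equivalence of simplicity with $\bmu$-stability for some $\bmu$ follows.
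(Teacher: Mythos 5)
Your proposal is correct in substance, but it diverges from the paper's argument in one direction and needs tightening at one point. For the implication ($\bmu$-stable $\Rightarrow$ simple) the paper does not use the general eigenvalue/slope argument you propose: after reducing to determinant $\O(\wi)$ it simply runs through the list of bundles that are $\bmu$-stable for some $\bmu$ (Proposition \ref{prop:outsideodd}) and checks simplicity of each one directly --- $(E_1,\bp)$ because $E_1$ is vectorially simple, and $(L\oplus L^{-1}(\wi),\bp)$ because the one-dimensional projective automorphism group moves the parabolic data. Your argument is the more general one (the standard Mehta--Seshadri argument, valid in any genus and for any number of marked points, and independent of Atiyah's classification), at the cost of having to verify that the saturated image and the coimage of $\phi - c\cdot\mathrm{id}$ carry compatible induced parabolic structures and that parabolic degree does not decrease under the induced injection of line bundles; the paper's check is ad hoc but buys the explicit list, which it needs elsewhere anyway. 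For the converse your route is essentially the paper's (reduce to odd degree, classify by underlying bundle, match the simple bundles against the stable lists), only more explicit: the paper compresses this to the assertion that a simple bundle is $\bmu$-semistable for some $\bmu$ together with the observation that the only bundles semistable for some $\bmu$ but stable for none are the $\mE_{=}(L)$, which are visibly non-simple.

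The one step that would fail as written is the claim that Atiyah's classification forces the underlying bundle of a simple parabolic bundle of determinant $\O(\wi)$ to be either $E_1$ or $L\oplus L^{-1}(\wi)$ with $\deg L = 0$. Atiyah only gives the dichotomy indecomposable/decomposable; a decomposable bundle of determinant $\O(\wi)$ is any $M\oplus N$ with $\deg M + \deg N = 1$, and the unbalanced splittings must be excluded separately. This is done by the very count you use later: if $\deg M - \deg N \ge 3$ then $\dim\Hom(N,M) = \deg M - \deg N \ge 3$, so $\Aut(M\oplus N)$ modulo scalars has dimension at least $4$ and cannot act with trivial stabiliser on a pair of parabolic directions, hence no such bundle is simple. (The paper sidesteps this by asserting that simplicity implies $\bmu$-semistability for some $\bmu$, which confines the underlying bundle to the semistable list --- an assertion it likewise leaves unjustified, so your version, once patched with this sentence, is arguably the more complete one.)
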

\begin{proof}
	Since simplicity is preserved by elementary transformations and twists, we can assume $\det E = \O(\wi)$. Bundles in $\Buninfm(C,T) \bigcup \BuninfM(C,T)$ are listed in Proposition \ref{prop:outsideodd}: $E_1$ is (vectorially) simple by Theorem \ref{th:E1}, thus $(E_1,\bp)$ is simple for every $\bp$. The remaining bundles are of the form $(L \oplus L^{-1}(\wi),\bp)$ with exactly one parabolic direction on $L$. The automorphism group of $(L \oplus L^{-1}(\wi))$ is of projective dimension 1 acting on the complementary of $L$ and $L^{-1}$. Hence, only the identity fixes the parabolic directions $\bp$ and $(L \oplus L^{-1}(\wi), \bp)$ is simple.

  Conversely, $(E,\bp)$ simple implies $(E,\bp)$ is $\bmu$-semistable for some $\bmu$. Parabolic $\bmu$-semistable bundles which are non-$\bmu$-stable for any $\bmu$ are of the form $\mE_=(L)$ by Theorem \ref{th:strict} and Proposition \ref{prop:outsideodd}. These are not simple. 
\end{proof}

  Simple bundles are then parametrized by the non-separated scheme
\begin{align*}
\BunO (C,T)= {X_< \coprod X_>} \Big/ \sim
\end{align*}
constructed by patching the two charts $X_ 1 = \BunOm(C,T)$ and $X_> = \BunOM(C,T)$, where we identify identical parabolic bundles along $X_< \setminus \Gamma$ and $X_> \setminus \Gamma$. 
  
 In this Section, we will study some of the automorphisms of this moduli space and in Section \ref{sec:delpezzogeom} we will prove that there are no more. Automorphisms of $\BunO(C,T)$ consist of pairs of maps $(\psi_1, \psi_2)$ that coincide on the gluing locus and that satisfy one of the following two conditions:
\begin{itemize}
  \item Each $\psi_k$ is an automorphism of $X_{*}$ leaving invariant $\Gamma$, or
  \item The maps $\psi_1:X_< \to X_>$ and $\psi_2:X_> \to X_<$ are isomorphisms leaving invariant $\Gamma$.
\end{itemize}
  By analytic continuation, it is sufficient to explicit only one of the $\psi_k$ to completely define the corresponding automorphism.

\paragraph{Twist automorphisms}
  The twist by a torsion line bundle $L_k$ induces an automorphism between the charts
\begin{equation*}
  X_< \xrightarrow{\otimes L_k} X_< \ , \qquad X_> \xrightarrow{\otimes L_k} X_>.
\end{equation*}
  It is clear by construction that these isomorphisms coincide on the complement $X_k \setminus \Gamma$ of the strictly semistable loci, thus they extend to a global isomorphism 
\begin{equation*}
  \BunO(C,T) \xrightarrow{\otimes L_k} \BunO(C,T).
\end{equation*}
  Since the $\bmu$-stability index is left unchanged under $\otimes L_k$, this automorphism preserves $\Gamma$. We will call this mappings \emph{twist automorphisms}.
  Remark that $\otimes L_{\infty}$ is the identity map and $\otimes L_0 \circ \otimes L_1 = \otimes L_{\lambda}$. 

  Our aim now is to compute these twists in terms of the coordinate system $\varepsilon$ defined in Section \ref{sub:coordinatepair}. 

\begin{proposition}
  Let $k \in \{0,1,\lambda, \infty \}$. Then, in the coordinate system given by $\varepsilon$, the mapping $\otimes L_k$ is expressed as follows:
\begin{align*}
  \PP &\xrightarrow{\otimes L_k} \PP \\
  (z,w) &\mapsto (\beta_k(z), \beta_k(w))
\end{align*}
with $\beta_k$ the map defined in Proposition \ref{prop:beta}. 
\end{proposition}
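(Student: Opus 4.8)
The plan is to verify the formula one coordinate at a time, using the definition $\varepsilon = (\Tu \circ \Forget) \times (\Tu \circ \Forget \circ \phi_T)$ and computing on the dense locus of stable points whose underlying bundle is decomposable. For the first coordinate, $\Forget$ trivially commutes with the twist, so $\Forget(\mE \otimes L_k) = [E \otimes L_k]$. On a generic (stable) point $E = L \oplus L^{-1}$ with $L = \O(p - \wi)$ and $L \neq L_k$, and since $L_k^2 = \O$ we get $E \otimes L_k = (L \otimes L_k) \oplus (L \otimes L_k)^{-1}$. By Theorem \ref{prop:Tu}, the first $\varepsilon$-coordinate of $\mE \otimes L_k$ is $\pi(q)$, where $L \otimes L_k = \O(q - \wi)$; and by Proposition \ref{prop:beta} this equals $\beta_k(\pi(p)) = \beta_k(z)$. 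Hence the first coordinate is sent to $\beta_k(z)$.

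The second coordinate requires understanding how $\phi_T$ interacts with the twist, and this is the main obstacle. The key claim is the commutation
\begin{equation*}
  \phi_T \circ (\otimes L_k) = (\otimes L_k) \circ \phi_T,
\end{equation*}
which I would deduce from the fact that elementary transformations commute with tensoring by a line bundle. Indeed, $\elem^+_{t_i}$ is defined fibrewise as a blow-up at the parabolic point followed by a contraction, while tensoring by $L_k$ is an isomorphism on each fibre that carries parabolic directions to parabolic directions; consequently $\elem^+_T(\mE \otimes L_k) = \elem^+_T(\mE) \otimes L_k$. Since $\phi_T(\mE) = \elem^+_T(\mE) \otimes \O(-\wi)$ and $\O(-\wi)$ commutes with $L_k$, the displayed identity follows. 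Applying the first-coordinate computation to the underlying bundle of $\phi_T(\mE)$ then shows that the second coordinate is sent to $\beta_k(w)$.

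Finally, I would note that both computations take place on the open dense subset of $\PP$ parametrising bundles of the form $L \oplus L^{-1}$ with $L \neq L_k$ and parabolics off $L$ and $L^{-1}$, which is precisely the stable decomposable locus of Proposition \ref{prop:clasifpar}. As $\otimes L_k$ is an automorphism of $\PP$ and $(z,w) \mapsto (\beta_k(z), \beta_k(w))$ is a morphism agreeing with it on a dense set, the two coincide everywhere by analytic continuation. As a consistency check on the excluded strictly semistable points $\mathcal{L}_j = [\mE_*(L_j)]$, whose $\varepsilon$-coordinates are $(j,j)$, the twist acts by $E_0 \otimes L_j \mapsto E_0 \otimes L_j \otimes L_k$, i.e.\ by the group law of $\T$ on $\Delta = \{0,1,\lambda,\infty\}$, which is exactly the permutation of $\Delta$ induced by $\beta_k$ in Proposition \ref{prop:beta}.
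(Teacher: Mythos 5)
Your proof is correct and follows essentially the same route as the paper: both reduce the statement to the action of $\otimes L_k$ on the underlying bundles $\Forget(\mE)$ and $\Forget(\phi_T(\mE))$ and then invoke Proposition \ref{prop:beta} together with the Tu isomorphism. The only difference is that you spell out the key commutation $\phi_T \circ (\otimes L_k) = (\otimes L_k) \circ \phi_T$ via the compatibility of elementary transformations with line-bundle twists, a step the paper leaves implicit in the phrase ``the twist by $L_k$ corresponds to tensoring each of these bundles by $L_k$''.
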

\begin{proof}
  Let $(E,\l)$ be a parabolic bundle in $X_<$. We will now compute the images  $$E = \Forget (E,\l) \quad \text{and} \quad F = \Forget \circ \phi_T (E,\l).$$

  The twist by $L_k$ in $\BunOm(C,T)$ corresponds to tensoring each of these bundles by $L_k$. Hence, the first (resp. the second) argument of the mapping $\otimes L_k$ depends only on the first (resp. the second) coordinate. Finally, these maps are the twists appearing in Proposition \ref{prop:beta}, i. e. those making the diagram 
\begin{equation*}
\begin{CD}
                              \BunO(C)    @>\otimes L_k>>      \Bun_{\O}(C)\\
                              @V{\Tu}V\cong V         @V\cong V{\Tu}V\\
                              \P ^1  @>\beta_k>>       \P^1\\
\end{CD}
\end{equation*}
commute.
\end{proof}

\paragraph{The elementary automorphism}

  Consider the map $\phi_T$ introduced in Section \ref{sub:coordinatepair}. Since every bundle in $X_<$ also appears in $\BunO^=(C,T)$, $\phi_T$ is well defined on $X_<$. Recall that $\elem^+_T$ changes the weights $\mu_i$ into $1 - \mu_i$. It follows that $\phi_T$ permutes $X_<$ and $X_>$. We will hence consider the map $\phi_T: X_< \to X_>$.
  The description of this map in our setting is the following:
\begin{proposition}
  In the coordinate system defined by $\varepsilon$, the mapping $\phi_T$ is just the mapping that exchanges coordinates between the two factors, i. e. it is the map
\begin{equation*}
  \PP \xrightarrow{\phi_T} \PP 
\end{equation*}
defined by $\phi_T (z,w) = (w, z)$.
\end{proposition}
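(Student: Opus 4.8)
The plan is to trace through the definitions of both $\varepsilon$ and $\phi_T$ and observe that swapping the two factors of $\PP$ is exactly what $\phi_T$ does by construction. Recall from the definition of the coordinate map that the first coordinate of $\varepsilon(\mP)$ is $(\Tu \circ \Forget)(\mP)$ and the second is $(\Tu \circ \Forget \circ \phi_T)(\mP)$. This asymmetry is the whole point: the two factors $\P^1_z$ and $\P^1_w$ differ precisely by an application of $\phi_T$. So the natural strategy is to compute $\varepsilon(\phi_T(\mP))$ directly and compare it with $\varepsilon(\mP)$ component by component.

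First I would write $\varepsilon(\phi_T(\mP)) = \big((\Tu \circ \Forget)(\phi_T(\mP)),\ (\Tu \circ \Forget \circ \phi_T)(\phi_T(\mP))\big)$. The first component is immediately the second coordinate $w$ of $\varepsilon(\mP)$, since by definition $w = (\Tu \circ \Forget \circ \phi_T)(\mP)$. For the second component I would need that $(\Tu \circ \Forget \circ \phi_T \circ \phi_T)(\mP) = (\Tu \circ \Forget)(\mP) = z$, which reduces to the claim that $\phi_T$ is an involution, i.e. $\phi_T \circ \phi_T = \mathrm{id}$ on $\BunO^=(C,T)$. Granting this, $\varepsilon(\phi_T(\mP)) = (w,z)$, so in the coordinate system $\varepsilon$ the map $\phi_T$ sends $(z,w) \mapsto (w,z)$, as claimed.

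The key step, and the main obstacle, is therefore establishing that $\phi_T$ is an involution. Unwinding the definition, $\phi_T(E,\l) = \elem^+_T(E,\l) \otimes \O(-\wi)$, so $\phi_T \circ \phi_T$ applies $\elem^+_T$ twice (with a twist in between) and tensors by $\O(-2\wi)$. By the properties of elementary transformations recalled in Section~\ref{subsec:elem}, the composition $\elem^+_{t_i} \circ \elem^+_{t_i}$ equals tensoring by $\O(t_i)$, and $\elem^+_{t_1}$ commutes with $\elem^+_{t_2}$; hence applying $\elem^+_T = \elem^+_{t_1} \circ \elem^+_{t_2}$ twice amounts to tensoring by $\O(t_1 + t_2) = \O(T)$. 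I would then need to check that the combined twist by $\O(-2\wi)$, together with the bookkeeping of which weights the transformation lands in, returns the original parabolic bundle. Since $T = t_1 + t_2$ and $\deg T = 2$, with $T \sim 2\wi$ on the elliptic curve only up to a degree-$0$ line bundle, the cleanest route is to verify directly that the resulting $\otimes \O(T - 2\wi)$ acts trivially on $s$-equivalence classes in $\BunO^=(C,T)$, so that $\phi_T$ squares to the identity on the moduli space even if not on the nose at the level of bundles.

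Finally, one should note that a single $\psi_k$ determines the automorphism of $\BunO(C,T)$ by analytic continuation, as observed after the two conditions for automorphisms, so it suffices to describe $\phi_T$ on the chart; the extension to the glued scheme is automatic from the fact that $\elem^+_T$ interchanges the chambers $J_<$ and $J_>$, which is exactly why $\phi_T$ is the map $X_< \to X_>$ rather than an automorphism of a single chart.
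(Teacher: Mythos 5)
Your argument is correct, and it is genuinely different from the paper's. The paper proves the formula modularly: it takes a generic point $(z,w)=([L\oplus L^{-1}],[M\oplus M^{-1}])$, identifies the corresponding parabolic bundle via the two subbundles $M(-\wi),M^{-1}(-\wi)\subset L\oplus L^{-1}$ carrying the parabolic directions, and tracks these subbundles through $\elem^+_T\otimes\O(-\wi)$ to see that the two decomposable classes get exchanged. You instead observe that the swap is essentially built into the definition of $\varepsilon=(\Tu\circ\Forget)\times(\Tu\circ\Forget\circ\phi_T)$: the first coordinate of $\varepsilon(\phi_T(\mP))$ is literally the second coordinate of $\varepsilon(\mP)$, and the second coordinate reduces to the first once $\phi_T\circ\phi_T=\mathrm{id}$. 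Your route is shorter and avoids the genericity caveat in the paper's computation; the paper's route buys the extra modular information about which bundle sits at $(z,w)$, which it reuses elsewhere (e.g.\ for the invariance of $\Gamma$ and the tangency lemma).

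The one point you should nail down rather than hedge is the involutivity. You write that $T\sim 2\wi$ holds ``only up to a degree-$0$ line bundle,'' and propose as a fallback to show that $\otimes\,\O(T-2\wi)$ acts trivially on $s$-equivalence classes. That fallback would not work: a nontrivial degree-$0$ twist does not act trivially on $\BunO^=(C,T)$ (it does not even preserve the trivial determinant unless it is $2$-torsion, and the $2$-torsion twists $\otimes L_k$ act as the nontrivial maps $\beta_k\times\beta_k$). Fortunately the hedge is unnecessary: since $\pi(t_1)=\pi(t_2)=t$ and $\wi$ is a ramification point of $\pi$, one has $t_1+t_2=\pi^*t\sim\pi^*\infty=2\wi$ as divisors on $C$, so $\O(T-2\wi)=\OC$ exactly and $\phi_T^2=\mathrm{id}$ on the nose (this is also why the paper can assert that $\elem^+_T(E,\boldsymbol{l})$ has determinant $\O(2\wi)$). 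With that equivalence made explicit, your proof is complete.
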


\begin{proof}   
  Let $(z,w) = ([E = L \oplus L^{-1}], [M \oplus M^{-1}])$ be a point in $\PP$. In the generic situation, $L \not = L^{-1}$ and there are unique subbundle inclusions $M(-\wi) \subset E$ and $M^{-1}(-\wi) \subset E$ defining a parabolic configuration $(E, \bm)$ over $(X,T)$ with both parabolic directions outside $L$ and $L^{-1}$.

  Then, by the properties of elementary transformations, we have $$\phi_T([L \oplus L^{-1}], [M \oplus M^{-1}]) = ([M \oplus M^{-1}], [L \oplus L^{-1}])$$ as stated. 
\end{proof}

  In particular, we have that the curve $\Gamma \subset \PP$ is invariant under the transformation $(z,w) \mapsto (w,z)$.

\paragraph{The odd degree case}
  The former automorphisms can also be defined on the odd degree case, namely for the moduli space $\Buninf(C,T)$. We will show that they have the same coordinate expression than in the even degree case.

\begin{proposition}
  The twist automorphism
  $\Buninf(C,T) \xrightarrow{\otimes L_k} \Buninf(C,T)$
is expressed as follows in the coordinate system $\varepsilon$:
 \begin{align*}
  \P^1_{\tilde{z}} \times \P^1_{\tilde{w}} &\xrightarrow{\otimes L_k} \P^1_{\tilde{z}} \times \P^1_{\tilde{w}} \\
  (\tilde{z},\tilde{w}) &\mapsto (\beta_k(\tilde{z}), \beta_k(\tilde{w}))
\end{align*}
with $\beta_k$ the map defined in Proposition \ref{prop:beta}.
\end{proposition}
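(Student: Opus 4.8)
The plan is to reduce the odd degree statement to the even degree one, which has already been proved in the preceding Proposition. The essential tool is the non-canonical coordinate change $\theta = \theta_1 \times \theta_2$ relating the two moduli spaces, obtained from the elementary isomorphism $\elem_{t_1}^+ \otimes \O(R)$. Since twisting by a torsion bundle $L_k$ commutes with elementary transformations and tensoring by line bundles (up to the usual bookkeeping of determinants), the odd-degree twist is conjugate to the even-degree twist by $\theta$. So first I would write down the key compatibility: the diagram relating $\otimes L_k$ on $\BunO^=(C,T)$ and on $\Buninf^=(C,T)$ commutes with the horizontal elementary isomorphisms, because $(\elem_{t_1}^+ \otimes \O(R)) \circ (\otimes L_k) = (\otimes L_k) \circ (\elem_{t_1}^+ \otimes \O(R))$ as maps of moduli spaces.

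Next I would transport the even-degree formula through $\theta$. We already know $\otimes L_k$ acts on $\PP$ by $(z,w) \mapsto (\beta_k(z), \beta_k(w))$, so in the odd coordinates the map is $\theta \circ (\beta_k \times \beta_k) \circ \theta^{-1}$, i.e. $(\theta_1 \beta_k \theta_1^{-1}) \times (\theta_2 \beta_k \theta_2^{-1})$. The claim is therefore equivalent to showing that $\theta_1$ and $\theta_2$ each conjugate $\beta_k$ to itself, i.e. that $\beta_k$ commutes with both $\theta_1$ and $\theta_2$.

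The cleaner route, and the one I would actually carry out, is to verify the formula directly from the definition of the odd coordinate map $\varepsilon$ rather than via the conjugation identity. By Proposition \ref{prop:2sec}, a point of $\Buninf^=(C,T)$ is $[E_1,\bm]$ with $m_1$ encoding a pair $\{p, \i_{t_1}(p)\}$ and $m_2$ a pair $\{q, \i_{t_2}(q)\}$, and $\varepsilon[E_1,\bm] = (\varepsilon_1(p), \varepsilon_2(q))$. Twisting by $L_k$ sends $E_1$ to $E_1$ (as $E_1$ is the unique indecomposable bundle of determinant $\O(\wi)$, by Theorem \ref{th:E1}) and carries the subbundle $L = \O(p-\wi)$ through $m_1$ to $L \otimes L_k = \O(p - w_k) = \O(p' - \wi)$ where $p' = \i_{w_k}\circ\i_{\wi}(p)$ by Proposition \ref{prop:beta}. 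So in the $\JacC$-picture of $m_p$ the twist acts on the Weierstrass data exactly by $\otimes L_k$, whose projection to $\P^1$ is $\beta_k$. I would then check that $\varepsilon_1$ intertwines this action with $\beta_k$, i.e. $\varepsilon_1(p') = \beta_k(\varepsilon_1(p))$, using that both $\varepsilon_1$ and $\pi$ are $2{:}1$ with fibers the $\i_{t_1}$- and $\i_{\wi}$-orbits respectively, and that $\varepsilon_1(w_k) = k$; the same argument with $\varepsilon_2$ handles the second coordinate.

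The main obstacle is the bookkeeping in this intertwining step: $\varepsilon_1$ is defined through the involution $\i_{t_1}$ while $\beta_k$ is defined through $\i_{w_k}\circ\i_{\wi}$, so I must confirm that the twist action descends compatibly along $\varepsilon_1$ and not merely along $\pi$. Concretely I expect to need that $\i_{t_1}$ commutes with $\otimes L_k$ (equivalently with $\i_{w_k}\circ\i_{\wi}$), which holds because all these maps are translations and sign-involutions in the group law and hence commute up to the fixed hyperelliptic involution, exactly as in the proof of Proposition \ref{prop:beta}. Once that commutation is in hand, the equality $\varepsilon_j \circ (\otimes L_k) = \beta_k \circ \varepsilon_j$ follows by evaluating on the Weierstrass points $w_k$ (which pin down the unique automorphism $\beta_k$ of $\P^1$) and invoking that an automorphism of $\P^1$ is determined by three points, giving the stated formula.
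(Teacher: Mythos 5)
Your direct route is essentially the paper's own proof, which consists precisely of your first two observations: the direction $m_j$ is cut out by the pair of subbundles $\O(p_j-\wi)$ and $\O(\i_{t_j}(p_j)-\wi)$, and $\otimes L_k$ acts on this pair by the involution $\i_{w_k}\circ\i_{\wi}$ of Proposition \ref{prop:beta}. You are also right to isolate the one nontrivial point that this sketch leaves open, namely the intertwining $\varepsilon_j\circ(\otimes L_k)=\beta_k\circ\varepsilon_j$, and your commutation step is sound: $\i_{w_k}\circ\i_{\wi}$ is translation by a $2$-torsion point, hence commutes with any involution of the form $p\mapsto c-p$, in particular with $\i_{t_j}$, so the action does descend along $\varepsilon_j$ to an involution $\gamma_k$ of $\P^1_{\tilde z}$.

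The gap is in the last step, which you assert rather than carry out, and it does not close as described. The formula $\varepsilon_1(p)=(ty-sx)/(y-s)$ gives $\varepsilon_1(w_k)=k$ only for the three finite Weierstrass points; at $\wi$ one computes (for instance from $\mathrm{div}(\varepsilon_1-t)=t_2+\wi-a-b$, where $a,b$ are the points of $C\cap\{y=s\}$ other than $t_1$) that $\varepsilon_1(\wi)=t$, not $\infty$. Since $\i_{w_k}\circ\i_{\wi}$ sends $w_i\mapsto w_j$ and $w_k\mapsto\wi$, the descended involution satisfies $\gamma_k(i)=j$, $\gamma_k(j)=i$ and $\gamma_k(k)=\varepsilon_1(\wi)=t$. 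Only the first two of these evaluations agree with $\beta_k$, and a double transposition of one pair is two point conditions, not the three needed to determine an automorphism of $\P^1$; the remaining condition reads $\gamma_k(k)=t$ against $\beta_k(k)=\infty$. So ``evaluating on the Weierstrass points'' yields the involution exchanging $\{k,t\}$ and $\{i,j\}$ rather than $\beta_k$, and the identity $\varepsilon_j\circ(\otimes L_k)=\beta_k\circ\varepsilon_j$ is exactly what still has to be justified. To repair the argument you must either renormalise the odd-degree coordinate so that the class of $\O\subset E_1$ sits over $\infty$ (the stated formula for $\varepsilon_1$ does not arrange this), or genuinely carry out your first, abandoned route through the even-degree case, where the same difficulty reappears as the compatibility of the four conditions defining $\theta_1$; as written, neither route is completed.
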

\begin{proof}
  It is enough to prove the assertion on $X_<$. Let $[E_1, \bm] \in X_<$ be an arbitrary point. By Proposition \ref{prop:2sec}, the direction $m_j$ is defined by two line subbundles $\O(p_j - \wi)$ and $\O(\i_{t_j}(p_j) - \wi)$, for $j \in \{1,2\}$. 
  Twisting by $L_k$ yields the involution of Proposition \ref{prop:beta} on each of these subbundles.
\end{proof}
  The map $\phi_T$ is defined in the same way as in the even degree case: it is the elementary transformation $\elem_T^+$ followed by the twist by $\O(-\wi)$.
\begin{proposition}
  In the coordinate system  $\varepsilon$, the mapping $\phi_T$ is  the map
\begin{equation*}
  \P^1_{\tilde{z}} \times \P^1_{\tilde{w}} \xrightarrow{\phi_T} \P^1_{\tilde{z}} \times \P^1_{\tilde{w}} 
\end{equation*}
defined by $\phi_T (\tilde{z},\tilde{w}) = (\tilde{w}, \tilde{z})$.
\end{proposition}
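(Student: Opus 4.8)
The plan is to argue directly, tracking degree $0$ line subbundles through the construction of $\phi_T$, in the same spirit as the twist computation just above. On a dense open locus a point of $\Buninf^=(C,T)$ is a $\bmu$-stable bundle $(E_1,\bm)$ with $\bm=(m_1,m_2)$ generic, and by Proposition \ref{prop:2sec} the two subbundles through $m_1$ are $\O(p-\wi)$ and $\O(\i_{t_1}(p)-\wi)$, while those through $m_2$ are $\O(q-\wi)$ and $\O(\i_{t_2}(q)-\wi)$; thus $\varepsilon(E_1,\bm)=(\tilde z,\tilde w)$ with $\tilde z=\varepsilon_1(p)$ and $\tilde w=\varepsilon_2(q)$. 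First I record that $\phi_T=\elem^+_T\otimes\O(-\wi)$ preserves the determinant $\O(\wi)$, using $t_1+t_2\sim 2\wi$, and that as an automorphism of $\P^1_{\tilde z}\times\P^1_{\tilde w}$ it carries a generic point to a generic one, whose image is again of the form $(E_1,\bm')$; hence both new coordinates can be read off from the subbundles meeting $\bm'$.

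The heart of the argument is to follow one subbundle through $\phi_T=\elem^+_{t_1}\circ\elem^+_{t_2}\otimes\O(-\wi)$. Take $\O(q-\wi)$, which meets $m_2$ but not $m_1$. Applying the Proposition on elementary transformations of Section \ref{subsec:elem}: under $\elem^+_{t_2}$ it becomes $\O(q+t_2-\wi)$ and leaves the new direction at $t_2$; under the subsequent $\elem^+_{t_1}$ it is unchanged, since it misses $m_1$, and now it meets the new direction $m_1'$ at $t_1$; finally the twist by $\O(-\wi)$ turns it into $\O(q+t_2-2\wi)\cong\O(q-t_1)$, where I use $t_2\sim 2\wi-t_1$. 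Writing $\O(q-t_1)=\O(q''-\wi)$ with $q''\sim q-t_1+\wi$, this exhibits a subbundle through the new direction $m_1'$, so the new first coordinate is $\varepsilon_1(q'')$.

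It remains to identify $\varepsilon_1(q'')$ with $\tilde w=\varepsilon_2(q)$, and this linear-equivalence bookkeeping is the only real obstacle. Two facts make it work. First, the explicit formulas defining $\varepsilon_1,\varepsilon_2$ give $\varepsilon_2=\varepsilon_1\circ\i_{\wi}$, so $\tilde w=\varepsilon_1(\i_{\wi}(q))$. Second, the defining relations of the involutions yield $q''\sim q-t_1+\wi\sim\i_{t_1}(\i_{\wi}(q))$, since $\i_{\wi}(q)\sim 2\wi-q$ and $\i_{t_1}(r)\sim 3\wi-t_1-r$. As $\varepsilon_1$ is $2{:}1$ with fibres $\{r,\i_{t_1}(r)\}$ by Proposition \ref{prop:2sec}, the points $q''$ and $\i_{\wi}(q)$ lie in a common fibre, whence $\varepsilon_1(q'')=\varepsilon_1(\i_{\wi}(q))=\tilde w$. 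The symmetric computation starting from $\O(p-\wi)$ through $m_1$, using $\varepsilon_1=\varepsilon_2\circ\i_{\wi}$ and $p-t_2+\wi\sim\i_{t_2}(\i_{\wi}(p))$, gives new second coordinate $\tilde z$. Thus $\phi_T(\tilde z,\tilde w)=(\tilde w,\tilde z)$ on a dense open set, and since both sides are morphisms of $\P^1_{\tilde z}\times\P^1_{\tilde w}$ the identity holds everywhere.

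Alternatively, one can reduce to the even-degree statement already proved: the elementary isomorphism $\elem^+_{t_1}\otimes\O(R)$ of the coordinate-change paragraph conjugates the odd $\phi_T$ to the even one, because the extra twist is reabsorbed via $\elem^-_{t_1}\cong\elem^+_{t_1}\otimes\O(-t_1)$. The Proposition then follows from the even-degree equality $\phi_T(z,w)=(w,z)$ once one checks that the two factors of the coordinate change coincide, $\theta_1=\theta_2$, which holds since $\pi\circ\i_{\wi}=\pi$ forces $\pi(q_k)=\pi(p_k)$ and a Möbius map is pinned down by the four values at $k\in\Delta$.
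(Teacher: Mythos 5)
The paper states this proposition without any proof at all --- it is left as implicitly analogous to the even-degree case and to the twist computation immediately preceding it --- so there is no ``paper's route'' to compare against; your argument has to stand on its own, and it does. I checked the two pivotal identities: the explicit formulas for $\varepsilon_1,\varepsilon_2$ do give $\varepsilon_2=\varepsilon_1\circ\i_{\wi}$ (substituting $(x,y)\mapsto(x,-y)$), and the bookkeeping $q-t_1+\wi\sim 3\wi-t_1-(2\wi-q)\sim\i_{t_1}(\i_{\wi}(q))$ is right, so the image subbundle $\O(q-t_1)$ of $\O(q-\wi)$ does land in the $\varepsilon_1$-fibre $\{\i_{\wi}(q),\i_{t_1}(\i_{\wi}(q))\}$ and the new first coordinate is $\tilde w$; the determinant check $t_1+t_2\sim2\wi$ and the use of the elementary-transformation rules from Section \ref{subsec:elem} are all correct, as is the symmetric half. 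The subbundle-tracking computation is exactly in the spirit of the paper's surrounding arguments and is arguably the proof the author had in mind. Your alternative reduction is also valid: $\elem^+_{t_1}\otimes\O(R)$ does conjugate the odd $\phi_T$ to the even one (the twists cancel since $\elem^+_{t_1}\circ\elem^+_{t_1}\circ\elem^-_{t_1}=\elem^+_{t_1}$), and $\pi(q_k)=\pi(p_k)$ forces $\theta_1=\theta_2$, so $\theta$ commutes with the swap; this route is shorter but inherits the generic-case-only character of the paper's even-degree proof. The only point at the level of rigor worth noting is the final extension from the dense open locus to all of $\P^1_{\tilde z}\times\P^1_{\tilde w}$, which tacitly uses that $\phi_T$ is a morphism of the moduli space rather than merely a set-theoretic bijection --- but the paper takes this for granted throughout, so it is not a defect relative to the ambient standard.
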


\subsection{The Torelli result} \label{sec:Torelli}
  By Theorem \ref{th:strict}, the strictly $\bmu$-semistable locus $\Gamma$ in the moduli space $\BunO^=(C,T)$ is an embedding of $\JacC$, which is itself isomorphic to $C$. Hence, the curve $C$ is naturally embedded in the moduli space. Here we show that this embedding also contains the information about the divisor $T$.
  
  Let $\varepsilon$ be the coordinate system defined in Section \ref{sub:coordinateimpair}. Let $(z_0,w_0) \in \PP \cong \BunO^=(C,T)$ be a point in the moduli space. The vertical and horizontal lines are defined as the subsets
\begin{align*}
  V_{z_0} = \{(z_0,w) \ | \ w \in \P^1_w \} \ , \
  H_{w_0} = \{(z,w_0) \ | \ z \in \P^1_z \}
\end{align*}
\begin{lemma} 
  Let $\Delta = \{0,1,\lambda,\infty \}$. Then, the vertical line $V_{z_0}$ is tangent to $\Gamma$ if, and only if, $z_0 \in \Delta$. Similarly, the horizontal line $H_{w_0}$ is tangent to $\Gamma$ if, and only if, $w_0 \in \Delta$.
\end{lemma}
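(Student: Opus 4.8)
The plan is to understand the geometry of the $(2,2)$-curve $\Gamma$ directly in the coordinates provided by $\varepsilon$, and to pin down the ramification of the projection maps $\Gamma \to \P^1_z$ and $\Gamma \to \P^1_w$. Recall from Theorem~\ref{th:p1xp1wall} that $\Gamma$ has bidegree $(2,2)$, so the projection $\Gamma \to \P^1_z$ (forgetting the $w$-coordinate) is a $2{:}1$ cover; a vertical line $V_{z_0}$ meets $\Gamma$ in two points, and $V_{z_0}$ is tangent to $\Gamma$ precisely when these two points collide, i.e.\ when $z_0$ is a \emph{branch point} of this projection. The whole statement thus reduces to identifying the four branch points of $\Gamma \to \P^1_z$ with the set $\Delta = \{0,1,\lambda,\infty\}$, and symmetrically for $\Gamma \to \P^1_w$.

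\textbf{Translating to the Jacobian.} By Theorem~\ref{th:strict}, a point $\mL \in \Gamma$ corresponds to a unique $L \in \JacC$, and the coordinate $z = \Tu\circ\Forget(\mE_<(L))$ records the decomposable bundle underlying the Jordan--H\"older graded object, namely $L \oplus L^{-1}(\wi)$. The first plan-step is therefore to express both coordinates of $\varepsilon|_\Gamma$ as functions of $L \in \JacC \cong C$. For $\mE_<(L) = (E_1,\bm)$ with both parabolic directions lying on $L$, Proposition~\ref{prop:2sec} tells us that the parabolic direction $m_j$ is encoded by the pair $\{p,\i_{t_j}(p)\}$ where $L = \O(p - \wi)$, and the coordinate maps $\varepsilon_j$ of Section~\ref{sub:coordinateimpair} then give $z = \varepsilon_1(p)$, $w = \varepsilon_2(p)$. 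The key point is that $\varepsilon_1$ is $2{:}1$ with fibers $\{p,\i_{t_1}(p)\}$, so the composite $L \mapsto z$ is the quotient of $\JacC \cong C$ by the involution $\i_{t_1}$ (and likewise $L\mapsto w$ by $\i_{t_2}$). The branch points of $\Gamma \to \P^1_z$ are thus the $\varepsilon_1$-images of the fixed points of $\i_{t_1}$ on $C$.

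\textbf{Computing the branch locus.} The second step is to locate these fixed points. The involution $\i_{t_1}$ has four fixed points (it is $\i_{w_k}\circ\i_\infty$-type; compare Proposition~\ref{prop:beta} and the ramification computation in Proposition~\ref{prop:2sec}). I expect the cleanest route is to observe from Section~\ref{sub:coordinateimpair} that $\varepsilon_1(w_k) = k$ for $k \in \{0,1,\lambda\}$ and to check that the four Weierstrass points $w_k$, $k \in \Delta$, are exactly the ramification locus over which $V_{z_0}$ becomes tangent, giving $z_0 \in \Delta$. Concretely, one verifies that at a Weierstrass point the two points of $\Gamma$ over $z_0$ coincide, using that the image point $\mathcal{L}_k = [\mE_*(L_k)]$ has $\varepsilon$-coordinates $(k,k)$ as already recorded in the text. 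So the four branch values of the first projection are precisely $\{0,1,\lambda,\infty\}=\Delta$, which is the desired statement; the horizontal case follows verbatim by exchanging $t_1 \leftrightarrow t_2$ and $\varepsilon_1\leftrightarrow\varepsilon_2$.

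\textbf{Main obstacle.} The routine part is the bidegree bookkeeping; the delicate part is correctly matching the \emph{tangency} of $V_{z_0}$ with $\Gamma$ to \emph{ramification} of the projection rather than to some accidental intersection multiplicity coming from a node or a base point of $\Gamma$. I would guard against this by confirming that $\Gamma$ is smooth (it is isomorphic to $C$ by the forthcoming Torelli discussion, hence has no singular points to create spurious tangencies) and that the projection $\Gamma \to \P^1_z$ has no vertical components, so that tangency is equivalent to a genuine ramification point. Granting this, the Riemann--Hurwitz count for the $2{:}1$ map $C \cong \JacC \to \P^1_z$ forces exactly four branch points, and Step~3 identifies them as $\Delta$, completing the proof.
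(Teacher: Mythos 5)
Your overall strategy is the right one and is essentially the paper's: a vertical line $V_{z_0}$ is tangent to the bidegree $(2,2)$ curve $\Gamma$ exactly when the two points of the fibre of the $2{:}1$ projection $\Gamma \to \P^1_z$ collide, so everything reduces to identifying the deck involution of this projection on $\Gamma \cong \JacC$ and its fixed points. But your execution mixes the two coordinate systems, and this produces a genuine error. The lemma lives in the even-degree chart $\BunOCT \cong \PP$, where the first coordinate of a point $[\mF_*(L)] \in \Gamma$ is $\Tu \circ \Forget$, i.e.\ $\Tu[L \oplus L^{-1}] = \pi(p)$ for $L = \O(p - \wi)$; the deck involution is therefore $L \mapsto L^{-1}$ (the involution $\i_{\wi}$ on $C \cong \JacC$), whose fixed points are exactly the $2$-torsion bundles $L_k$, and Tu's theorem places these at $z_0 = k \in \Delta$ --- that is the paper's one-line argument. (Note that your expression $\Tu\circ\Forget(\mE_<(L))$ is not even defined: $\mE_<(L)$ has determinant $\O(\wi)$, so $\Forget$ lands outside $\BunO(C)$.)

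Instead you pass to the odd-degree coordinates $(\varepsilon_1,\varepsilon_2)$, where you correctly identify the deck involution of $\Gamma \to \P^1_{\tilde z}$ as $\i_{t_1}$ --- but you then assert that its fixed points are the Weierstrass points. They are not: $p = \i_{t_1}(p)$ means $2p + t_1 \sim 3\wi$, whereas $2w_k \sim 2\wi$, so $w_k$ is fixed by $\i_{t_1}$ only if $t_1 = \wi$. The actual fixed points are the four square roots of $\O(3\wi - t_1)$ (the points $p_k = \i_{w_k}(r)$ of the coordinate-change proposition), and their $\varepsilon_1$-images are generically not $\{0,1,\lambda,\infty\}$. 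The fact that $\mathcal{L}_k = [\mE_*(L_k)]$ has odd coordinates $(k,k)$ shows only that $(k,k)$ lies on $\Gamma$; the fibre of $\Gamma \to \P^1_{\tilde z}$ over $\tilde z = k$ also contains the distinct point $[\mE_*(\O(\i_{t_1}(w_k) - \wi))]$, so there is no tangency there. (Concretely, $\varepsilon_1^{-1}(0) = \{w_0, \i_{t_1}(w_0)\}$ consists of two distinct points whenever $t_1 \neq \wi$.) The fix is simply to stay in the even-degree coordinates, where the relevant involution really is $L \mapsto L^{-1}$; your Riemann--Hurwitz count and the smoothness of $\Gamma$ then go through unchanged. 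Your treatment of the horizontal case by exchanging $t_1$ and $t_2$ is acceptable once the above is repaired; the paper instead invokes the invariance of $\Gamma$ under $(z,w)\mapsto(w,z)$, to the same effect.
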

\begin{proof}
  For $z_0 = [L \oplus L^{-1}] \in \P^1_z$ fixed, there are generically two points in $\Gamma$ with first coordinate $z_0$, namely $[\mF_*(L)]$ and $[\mF_*(L^{-1})]$. Therefore, $V_{z_0}$ is tangent if, and only if $L$ is a 2-torsion bundle, which correspond to $z_0 \in \Delta$ according to Proposition \ref{prop:Tu}.

  Since $\Gamma$ is invariant under the transformation $(z,w) \mapsto (w,z)$, $H_{w_0}$ is a tangent horizontal line if, and only if, $V_{w_0}$ is a tangent vertical line.  
\end{proof}
  The following Theorem is the explicit version of Theorem \ref{thx:Torelli}:
\begin{proposition} \label{prop:tang}
  Let $k \in \Delta$ and $V_{k}$ be a vertical tangent to $\Gamma$. Let $(k, w_0) \in \PP$ be the tangency point. Then, there exists an element $g \in \text{PGL}(2)$ such that
\begin{align*}
  g(\{0,1,\lambda,\infty\})=\{0,1,\lambda,\infty\} \text{ and } g(w_0) = t.
\end{align*}
\end{proposition}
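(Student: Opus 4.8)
The plan is to locate the tangency point explicitly in the coordinate system $\varepsilon$ on $\BunO^=(C,T)$ and read off $w_0$ in terms of $t$, then to exhibit $g$ among the automorphisms $\beta_k$ of Proposition \ref{prop:beta}. By the preceding Lemma, the vertical tangents to $\Gamma$ are exactly the lines $V_k$ with $k\in\Delta$, and the tangency point is the unique point of $\Gamma$ lying over $z=k$, namely $\mathcal{L}_k=[\mF_*(L_k)]$, where the underlying Jordan--Hölder line bundle degenerates to the $2$-torsion bundle $L_k$. Since the first coordinate is $\Tu\circ\Forget(\mathcal{L}_k)=\pi(w_k)=k$, the whole problem reduces to computing the second coordinate $w_0=\Tu\circ\Forget\circ\phi_T(\mathcal{L}_k)$.

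To compute it, I would first determine the involution $\Psi$ that $\phi_T$ induces on $\Gamma\cong\JacC$. Taking the graded representative $\mF_=(L)=(L\oplus L^{-1},\bm)$ with $m_1\in L$ and $m_2\in L^{-1}$, I would track it through $\phi_T=\elem^+_T\otimes\O(-\wi)$ using the rules of the Proposition on elementary transformations: applying $\elem^+_{t_2}$ then $\elem^+_{t_1}$ twists the summand carrying each parabolic direction, producing $L(t_1)\oplus L^{-1}(t_2)$, and after $\otimes\O(-\wi)$ the underlying bundle is $M\oplus M^{-1}$ with $M=L(t_1-\wi)$. Here one uses the hyperelliptic relation $t_1+t_2\sim 2\wi$ to identify $L^{-1}(t_2-\wi)=M^{-1}$ and to verify $\phi_T^2=\mathrm{id}$. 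Reading off which summand carries each new direction identifies the image as an $\mF_=$-configuration and yields $\Psi(L)=L^{-1}\otimes\O(\wi-t_1)$, which is indeed an involution.

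It then remains to specialize to $L=L_k$ and compute the $\pi$-image. Since $L_k^{-1}=L_k=\O(w_k-\wi)$ and $\O(\wi-t_1)$ is the class of the point $t_2$ (again because $t_1+t_2\sim 2\wi$), the bundle $\Psi(L_k)=\O(w_k-t_1)$ corresponds to the point $t_2\oplus w_k$ of $C$. Proposition \ref{prop:beta} says precisely that translation by $w_k$ descends through $\pi$ to $\beta_k$, so $w_0=\pi(t_2\oplus w_k)=\beta_k(\pi(t_2))=\beta_k(t)$. Taking $g=\beta_k$ finishes the proof: $\beta_k$ preserves $\Delta$ by Proposition \ref{prop:beta}, and $\beta_k(w_0)=\beta_k(\beta_k(t))=t$ since $\beta_k$ is an involution.

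I expect the elementary-transformation bookkeeping of the second paragraph---tracking both parabolic directions through $\elem^+_T$ and correctly using $t_1+t_2\sim 2\wi$ to recognize the output as a genuine $\mF_=$-point rather than its inverse---to be the main obstacle; everything else is then a short group-law computation on $C$ together with a direct appeal to Proposition \ref{prop:beta}. As a consistency check, the four tangency second-coordinates become $\beta_0(t),\beta_1(t),\beta_\lambda(t),\beta_\infty(t)=t$, a single orbit of $t$ under the group $\langle\beta_k\rangle$ of automorphisms preserving $\Delta$, matching the symmetry forced by the twist automorphisms $\otimes L_k$.
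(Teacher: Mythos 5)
Your proof is correct and follows essentially the same route as the paper: both compute the second coordinate of the tangency point by pushing a representative of $[\mF_*(L_k)]$ through $\phi_T$ via the elementary-transformation rules, arrive at $w_0=\beta_k(t)$, and take $g=\beta_k$ using Proposition \ref{prop:beta}. The only (harmless) differences are that you use the decomposable graded representative $\mF_=(L_k)=(L_k\oplus L_k,\bm)$ and treat all $k$ uniformly, whereas the paper works with the representative whose underlying bundle is $E_0\otimes L_k$, pins down the auxiliary subbundle by a determinant argument in the base case, and then handles the remaining $k$ by conjugating with the twist $\otimes L_k$.
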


\begin{proof}
  Assume $k = 0$ and let $\mP = (0,w_0)$ be the tangency point with $\Gamma$. By definition of $\varepsilon$, $\mP$ represents parabolic bundles with underlying bundle $\O \oplus \O$ or $E_0$. Since $\mP$ is in $\Gamma$, we have that $\mP = [\mF_*(\O)]$. 
  
  Consider the bundle $\mF_<(\O) = (E_0, \bp)$, where $p_1 \subset \O \subset E_0$ and $p_2 \not \subset \O$. Let $q \in C$ be the unique point such that the subbundle $L = \O(-q)$ passes through both $p_1$ and $p_2$. 

  Let us apply the mapping $\phi_T$ to $\mF_<(\O)$. After the first elementary transformation $\elem^+_{t_1}$, we get the bundle $\mE = (\O(t_1) \oplus L(t_1), \bp')$, where $p_1'$ is outside both factors and $p_2'$ lies on $L(t_1)$. Therefore, the underlying bundle of $\elem^+_{t_2}(\mE)$ is $\O(t_1) \oplus L(t_1 + t_2)$. Twisting by $\O(-\wi)$ gives the bundle $F = \O(t_1 - \wi) \oplus L(\wi)$. Since $\text{det} F = \O$, it follows that $q = t_1$ and  $w_0 = \Tu[F] = t$, thus we choose $g = \text{id}$.
  
  For a $k \in \Delta \setminus \{0\}$, the above discussion holds if we multiply every bundle by $L_k$. The final underlying bundle is $F_k = F \otimes L_k$. By Proposition \ref{prop:beta}, $w_0 = \Tu[F_k] = \beta_k(t)$. Hence, we can take $g = \beta_k^{-1} = \beta_k \in \text{PGL}(2)$.
\end{proof}

\begin{proof}[Proof of Theorem \ref{thx:Torelli}]
  Let $\mathbf{G} = \text{PGL} (2) \times \text{PGL} (2)$. We have to show that there are one-to-one correspondences between the sets
\begin{align*}
\left\{\parbox[p]{6em}{\begin{center} $(2,2)$-curves   $\Gamma \subset \PPsin$\end{center}}\right\} \Big/ G \
\xleftrightarrow{\ 1:1 \ }
\left\{\parbox[p]{8em}{\begin{center} $2$-punctured elliptic curves  $(C, T)$ \end{center}}\right\} \Big/\sim & \
\xleftrightarrow{\ 1:1\ }
\left\{\parbox[p]{8em}{\begin{center} $4+1$-punctured rational curves  $(\P^1, \uD + t)$\end{center}}\right\} \Big/\sim.
\end{align*}
  For the first correspondance, consider the elliptic curve $\Gamma \subset \P^1 \times \P^1$ and the elliptic cover given by the first projection  $\pi: \P^1 \times \P^1 \to \P^1$. The divisor $T$ is given by the preimage of the point $t$ defined in Proposition \ref{prop:tang}. This divisor is invariant by $\mathbf{G}$ again by Proposition \ref{prop:tang}. The inverse map is given by our construction of the moduli space $\BunO(C,T)$.
  
  The second correspondance is given by the elliptic cover $\pi: C \to \P^1$ such that $\pi(t_1) = \pi(t_2) = t$, and its ramification divisor $\uD$.
\end{proof}

\section{A map between moduli spaces} \label{sec:map}  
  Let $\uW = 0+1+\lambda+\infty$ and $\uD = \uW + t$ be reduced divisors on $\P ^1$. Let $C$ be the curve defined by the equation $y^2 = x (x-1) (x-\lambda)$. Consider the divisors $W = w_0 + w_1 + w_\lambda + \wi$ and $T = t_1 + t_2$ on $C$ respectively supported by the Weierstrass points and by the two preimages of $t$ under the hyperelliptic cover $\pi$. Let $D = W + T$.

  We study in this Section a map $\Phi$ between the moduli spaces $\Bundemi$ and $\BunOCT$, with fixed weights $\ubmu$ and $\bmu$. The first space is the moduli space of $\ubmu$-semistable rank 2 parabolic vector bundles of degree 0 over $(\P^1, \uD)$. This moduli space is described and constructed in ~\cite{loraysaito}. 

  More precisely, the authors consider the full coarse moduli space  $\Bun_{-1}(\P^1, \uD)$ of degree $-1$ parabolic bundles ($P_{-1}(\boldsymbol{t})$ in their notation). They construct this space by patching projective charts consisting on moduli spaces $\Bun_{-1}^{\bnu}(\P^1, \uD)$ of $\bnu$-semistable bundles.
  
  Let us describe two of these charts. The chart $V$ corresponds to the moduli space $\Bun_{-1}^{\bnu}(\P^1, \uD)$, for \guillemotleft democratic\guillemotright weights $\nu_i = \nu$, with $\frac{1}{5} < \nu < \frac{1}{3}$, and is isomorphic to $\P^2$. It consists of those indecomposable parabolic bundles of the form $E = (\OP \oplus \OP(-1), \bn)$ with no $n_i$ lying in $\OP$ and not all $n_i$ lying in $\OP(-1)$ (see Proposition 3.7 of ~\cite{loraysaito}). There are 16 special geometric objects in $V$, namely: 
\begin{itemize}
  \item Five points $D_i$ for $i \in \{0,1,\lambda,\infty,t\}$. 
  \item Ten lines $\Pi_{ij}$ joining $D_i$ and $D_j$.
  \item The conic passing through all $D_i$.
\end{itemize}
  Let $\tOmega$ be the set of these objects. 
  The chart $\S$ corresponds to the moduli space with democratic weights $\bnu$ such that $\frac{1}{3} < \nu < \frac{3}{5}$. As a projective surface, it is isomorphic to the blow-up of the five points $D_i$ in $\P^2$. This is by definition the del Pezzo surface of degree 4 (see ~\cite{dolgachev}). In particular, the exceptional divisors $\Pi_i$ of $D_i$ and the total transforms of $\Pi_{ij}$ and $\Pi$ constitute 16 ($-1$)-curves in $\S$. We will keep the notation $\Pi_{ij}$ and $\Pi$ for the total transforms when there is no risk of confusion. There are exactly five ($-1$)-curves intersecting $\Pi$, namely the $\Pi_i$'s, and the chart $V$ is obtained as the blow-down of these 5 curves. Similarly, there are 5 ($-1$)-curves intersecting $\Pi_i$, namely $\Pi$ and $\Pi_{ij}$ for $j \not = i$. The other charts are isomorphic to projective surfaces obtained by blow-downs of some of these $(-1)$-curves. 

  The open set
\begin{align*}
  U_{-1} = V \setminus \left\{ D_i , \Pi_{ij}, \Pi \right\}
\end{align*}
of generic parabolic bundles is common to every chart. The final non-separated patching is made via the blow-down birational maps between the charts  (see Theorem 1.3 in ~\cite{loraysaito}). 
  
\subsection{Defining the map $\Phi$} \label{sec:mapPhi}
  Let us start by fixing weights $\ubmu = (\frac{1}{2},\frac{1}{2},\frac{1}{2},\frac{1}{2},\mu)$, where the free weight $\mu$ corresponds to the point $t$. Consider the associated moduli space $\Bundemi$. The isomorphism $$\elem_0^+:\Bun_{-1}^{\ubmu}(\P^1, \uD) \to \Bundemi$$ is well-defined by the properties of elementary transformations. Let $U_0$ be the image of $U_{-1}$ by this map. We have the following result:  

\begin{proposition}
  The moduli space $\Bundemi$ is isomorphic to $\S$ for all $\mu \in [0,1]$.  
 Every bundle in $U_0$ has trivial underlying bundle.
\end{proposition}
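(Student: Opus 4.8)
The plan is to prove the two assertions separately. For the first claim, that $\Bundemi$ is isomorphic to $\S$ for all $\mu \in [0,1]$, I would argue that the choice of democratic weights $\nu_i = \nu$ with $\frac{1}{3} < \nu < \frac{3}{5}$ defining the chart $\S$ in \cite{loraysaito} lives in the degree $-1$ setting, and the elementary transformation $\elem_0^+$ carries it to weights $\ubmu = (\frac{1}{2}, \frac{1}{2}, \frac{1}{2}, \frac{1}{2}, \mu)$ in the degree $0$ setting by the rule $\mu_i \mapsto 1 - \mu_i$ applied at the point $0$. The key point is that $\elem_0^+$ is an isomorphism of moduli spaces, so it suffices to check that for every $\mu \in [0,1]$ the weight vector $\ubmu$ lands in the chamber whose moduli space is $\S$ under this correspondence. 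I would verify that moving the free weight $\mu$ over all of $[0,1]$ does not cross any wall that would change the isomorphism type of the moduli space away from $\S$: the four frozen weights are all equal to $\frac{1}{2}$, and the stability conditions involving $\mu$ alone do not produce new strictly semistable loci that would force a blow-down. This is essentially a wall-and-chamber bookkeeping argument, relying on Theorem 1.3 of \cite{loraysaito}.

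For the second claim, that every bundle in $U_0$ has trivial underlying bundle, I would start from the description of $U_{-1}$. By the cited Proposition 3.7 of \cite{loraysaito}, the generic bundles in the chart $V$ are of the form $(\OP \oplus \OP(-1), \bn)$ with no parabolic direction $n_i$ lying in $\OP$ and not all lying in $\OP(-1)$; the open set $U_{-1}$ is obtained by further removing the points $D_i$, the lines $\Pi_{ij}$ and the conic $\Pi$. I would then trace through the effect of $\elem_0^+$ on the underlying vector bundle. The elementary transformation at the point $0$ applied to $\OP \oplus \OP(-1)$ raises the degree to $0$; by the properties of elementary transformations recalled in the Proposition of Section \ref{subsec:elem}, the resulting underlying bundle over $\P^1$ has degree $0$ and is generically $\OP \oplus \OP$, i.e. trivial. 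The removal of the $16$ special objects in $\tOmega$ is precisely what excludes the non-generic positions where the elementary transformation would instead yield a non-balanced bundle $\OP(k) \oplus \OP(-k)$; over $\P^1$ every rank $2$ bundle of degree $0$ is of this form, and the only $\nu$-stable balanced one is the trivial bundle.

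The main obstacle will be the second claim: making precise exactly which parabolic positions in $V$ force the elementary transform to jump away from the trivial bundle, and confirming that these are captured precisely by the points, lines and conic removed in passing from $V$ to $U_{-1}$. Concretely, I would show that the locus where $\elem_0^+$ produces $\OP(1) \oplus \OP(-1)$ corresponds to the parabolic directions being in special position relative to the splitting, and that these are exactly the geometric elements $D_i$, $\Pi_{ij}$, $\Pi$. The cleanest route is to observe that a bundle $E$ over $\P^1$ of degree $0$ has $\OP$ as a subbundle of maximal degree if and only if $E \cong \OP \oplus \OP$, and to relate the existence of a destabilizing line subbundle after the elementary transformation to the incidence of the original parabolic directions with the special curves. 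I would conclude by noting that on $U_0 = \elem_0^+(U_{-1})$ no such destabilizing subbundle survives, so the underlying bundle is everywhere trivial.
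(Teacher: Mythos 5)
Your treatment of the first claim is essentially the paper's: the paper also reduces to the isomorphism $\elem_0^+$ and then checks by direct computation that the bundles in the special families $\Pi$, $\Pi_i$, $\Pi_{ij}$ remain $\ubmu$-semistable for all $\mu \in [0,1]$ (and stable for $0<\mu<1$), so that no $(-1)$-curve is contracted and the chart stays $\S$. Only be careful at the endpoints $\mu=0,1$: there the weights sit on a wall and one only gets semistability, so the ``no wall is crossed'' phrasing must be replaced by the semistability statement rather than a chamber statement.

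The second claim is where your plan has a genuine gap. You propose to show that the locus where $\elem_0^+$ produces an unbalanced bundle $\OP(k)\oplus\OP(-k)$, $k\geq 1$, is exactly the union of the $D_i$, the lines $\Pi_{ij}$ and the conic $\Pi$; but that locus is in fact empty on the whole chart $V$, so this identification cannot be carried out, and the removal of the sixteen objects has nothing to do with the type of the underlying bundle (those objects are removed because they form the locus along which the charts of the non-separated moduli space of \cite{loraysaito} disagree). The correct argument, which is the paper's, is pointwise and exploits the non-uniqueness of the splitting: since no $n_i$ lies in $\OP$ for a bundle in $V$, one may choose a degree $-1$ subbundle of $\OP\oplus\OP(-1)$ passing through the parabolic direction $l_0$ over $0$ (through any point of $\P E$ off the $(-1)$-section there is a one-parameter family of $(+1)$-sections), and thus rewrite $\mE = (\OP\oplus\OP(-1),\bl)$ with $l_0\subset\OP(-1)$. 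Then $\elem_0^+$ sends this $\OP(-1)$ to a copy of $\OP$ and fixes the complementary $\OP$, yielding two distinct degree-$0$ line subbundles of the degree-$0$ image; since $\OP(k)\oplus\OP(-k)$ with $k\geq 1$ admits no degree-$0$ line subbundle, this forces $\elem_0^+(\mE)\cong\OP\oplus\OP$. In particular the conclusion holds on all of $\elem_0^+(V)$, not only on $U_0$, which your ``generic position'' strategy would obscure.
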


\begin{proof}
  The map $\elem_0^+$ is an isomorphism of moduli spaces preserving $\ubmu$-stability. A straightforward calculation shows that all bundles in the families $\Pi$, $\Pi_i$, and $\Pi_{i,j} \subset \Bun_{-1}^{\ubmu}(\P^1, \uD)$ are $\ubmu$-semistable (stable when $0 < \mu < 1$). Hence, $\Bundemi  \cong \S$.
  
  For the second assertion, remark that every parabolic bundle $\mE$ in $U_{-1}$ can be written as $(\OP \oplus \OP(-1), \bl)$ with $l_0 \subset \OP(-1)$. By properties of elementary transformations, $\elem_0^+(\mE)$ has trivial underlying bundle.
\end{proof}

  Let us now define the mapping $\Phi$. Let $\mE$ be a parabolic bundle in $\Bundemi$. Consider the pullback bundle $\pi^*\mE$ of $\mE$: it is a parabolic bundle over $(C, D)$. The bundle $\pi^*\mE$ is $\ubmu'$-semistable, where $\ubmu' = (1,1,1,1,\mu,\mu)$.

  Consider the following composition of maps:
\begin{align*}
  \BunO^{\ubmu'}(C, D) \xrightarrow{\elem_{W}^+} \Bun^{\ubmu''}_{4\wi}(C, D) \xrightarrow{\text{Forget}_W} \Bun^{\bmu}_{4\wi}(C, T) \xrightarrow{\otimes M} \BunOCT.
\end{align*}
  The weights here are $\ubmu'' = (0,0,0,0,\mu,\mu)$ and $\bmu = (\mu,\mu)$.
  The first map is the positive elementary transformation over $W$. 
  The second map forgets parabolic directions over $W$ and keeps those over $T$. Because of the nullity of the weights over $W$, this map preserves the stability notion.
  The last map is the twist automorphism, with $M = \OC(-2 \wi)$.

  Let $\phi_W$ be the composition of these four maps.
  The map $\Phi$ is the composition of $\pi^*$ and $\phi_W$:

\begin{equation*}
\begin{tikzcd}[column sep = small]
 \Bundemi \arrow{rr}{\Phi} \arrow{rd}{\pi^*}	&	& \BunOCT \\
 	        & \BunO^{\ubmu'}(C, D) \arrow{ru}{\phi_W}    &  
\end{tikzcd}
\end{equation*}

  Notice that the role of $t \in \P^1$ in the definition of this map is different from the roles of the other points in $\uD$. We have defined a morphism $\Phi:\Bundemi \to \BunOCT$ between our moduli spaces.

\subsection{Computing the map $\Phi$}
  First, we will compute the map $\Phi|_{U_0}$. For $c, l \in \P^1$, consider the set 
\begin{align*}
  U_C := \left\{ (\OC \oplus \OC, \underline{\bm}) \ | \  c, l  \in \P^1 \right\} \subset \BunO^{\ubmu'}(C, D). 
\end{align*}
of parabolic bundles on $(C, D)$, where  $\underline{\bm} := (0, 1, c, \infty, l, l)$. 
  Since $\pi(t_1) = \pi(t_2)$, the parabolic directions over $t_1$ and $t_2$ of $\pi^*\mE$ are the same. Thus, $\pi^*$ is a birational map between $U_0$ and $U_C$. 

  We will use the coordinate chart $U_C \cong \P^1_c \times \P^1_l$ and the coordinate system $\varepsilon$ for $\BunOCT$ defined in Section \ref{sub:coordinatepair}.
\paragraph{The map $\phi_W|_{U_C}$}
  Let $(E, \bm)$ be a parabolic bundle over $(C, D)$ and $D_0 \subset D$ a subdivisor. We say that a line subbundle $L$ of $E$ passes through $D_0$ if $L$ passes through every point of $D_0$.
\begin{lemma} \label{lem:parlinbun}
  Let $\mE = (\OC \oplus \OC, \bm)$ be a parabolic bundle over $(C, D)$. 
  Let $S_W^{-2}(\mE)$ be the set of degree $-2$ line subbundles  of $\OC \oplus \OC$ passing through $\bm|_W$. 
  Then, there exists a point $p \in C$ such that
\begin{align*}
  S_W^{-2}(\mE) = \{ L_W, L_W' \} \ \text{with} \ L_W \cong \OC(-p-\wi) \text{, } L_W' \cong \OC(-\i_{\wi}(p)-\wi)
\end{align*}
  Similarly, let $S_{D}^{-3}(\mE)$ be the set of degree $-3$ line subbundles  of $\OC \oplus \OC$ passing through $\bm|_{D}$. 
  Then, there is a point $q \in C$ such that
\begin{align*} 
  S_{D}^{-3}(\mE) = \{ M_D, M_D' \} \ \text{with} \ M_D \cong \OC(-q-2\wi) \text{, } M'_{D} \cong \OC(-\i_{\wi}(q)-2\wi) .
\end{align*}
\end{lemma}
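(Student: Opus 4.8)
The plan is to translate the problem into the geometry of sections of the ruled surface $C \times \P^1 = \P(\OC \oplus \OC)$ and then exploit the hyperelliptic involution $\i_{\wi}$. A degree $-d$ line subbundle $L \subset \OC \oplus \OC$ (with $d>0$) is the same datum as a section of $C \times \P^1 \to C$ of self-intersection $2d$, that is, the graph of a degree $d$ morphism $f \colon C \to \P^1$ with $L = f^*\O(-1)$. Since the bundle is trivial, every fibre $\P(E_x)$ is canonically the fixed $\P^1$, and the condition that $L$ pass through the direction $m_x$ at $x$ becomes $f(x)=m_x$. Thus $S_W^{-2}(\mE)$ is the set of degree $2$ maps $f$ with $f(w_k)=m_k$ for $k\in\{0,1,\lambda,\infty\}$, and $S_D^{-3}(\mE)$ the set of degree $3$ maps realising the six prescribed values. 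Since every degree $-2$ (resp. $-3$) bundle is $\OC(-p-\wi)$ (resp. $\OC(-q-2\wi)$) for a unique $p$ (resp. $q$), the content of the Lemma is that each set has exactly two elements forming a single $\i_{\wi}$-conjugate pair.

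Next I would record the involution step. The trivial bundle is canonically $\i_{\wi}$-equivariant, and $\i_{\wi}$ fixes every Weierstrass point; hence $\i_{\wi}^*$ acts on $S_W^{-2}(\mE)$, and on line bundles it sends $\OC(-p-\wi)$ to $\OC(-\i_{\wi}(p)-\wi)$. So degree $-2$ solutions automatically come in conjugate pairs, and the stated shape follows once one knows there is exactly one pair.

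The heart of the matter is the count, which I would carry out first in degree $-2$. For a fixed degree $2$ bundle $N=L^{-1}$ the canonical map $\phi_N\colon C\to\P(H^0(N)^{\vee})$ is the quotient by $x\mapsto\sigma_N\ominus x$; a degree $2$ map $f$ with $f^*\O(1)=N$ and $f(w_k)=m_k$ exists if and only if the four points $\phi_N(w_k)$ have the same cross-ratio as the four directions $m_k$, in which case $f$ is unique. Conjugating the pencil's involution to $\i_{\wi}$ by translation by a half $\tau$ of $\sigma_N$ and invoking Proposition \ref{prop:beta}, these four points become $\beta_k(X)$ with $X=\pi(\tau)$, and a direct computation gives
\begin{equation*}
  \mathrm{CR}\bigl(X,\ \beta_0(X),\ \beta_1(X),\ \beta_\lambda(X)\bigr)=\lambda\left(\frac{X^2-2X+\lambda}{X^2-2\lambda X+\lambda}\right)^{2}.
\end{equation*}
This is a rational function of $X$ of degree $4$ with no cancellation. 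Pulling the resulting cross-ratio map back under multiplication by $2$ on $C$ (a degree $4$ map onto $\operatorname{Pic}^2(C)$) recovers this degree $4$ function composed with the degree $2$ map $\pi$, a degree $8$ map; hence the cross-ratio map $\operatorname{Pic}^2(C)\to\P^1$ has degree $8/4=2$. Being of degree $2$ it is surjective (so $S_W^{-2}(\mE)$ is nonempty) and its fibres are exactly the conjugate pairs $\{N,\i_{\wi}^*N\}$, giving precisely the two subbundles $\OC(-p-\wi)$ and $\OC(-\i_{\wi}(p)-\wi)$.

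Finally I would treat the degree $-3$ statement in the same spirit: the reduction to sections is identical, and $\i_{\wi}$ now genuinely swaps $t_1=(t,s)$ and $t_2=(t,-s)$, so it preserves the configuration precisely because $m_{t_1}=m_{t_2}$ (which holds for the pullbacks and for the bundles of $U_C$ on which the Lemma is used), yielding again conjugate pairs $\{\OC(-q-2\wi),\OC(-\i_{\wi}(q)-2\wi)\}$. I expect the count here to be the main obstacle: a degree $3$ map carries no deck involution, so the clean cross-ratio reduction is unavailable. Instead, for a fixed $N\in\operatorname{Pic}^3(C)$ the six incidence conditions become six linear conditions on the $6$-dimensional space $H^0(N)^{\oplus 2}$, and a nonzero solution exists exactly when a $6\times 6$ determinant depending on $N$ vanishes. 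Counting the zeros of this function on $\operatorname{Pic}^3(C)\cong C$, by an explicit computation in the Weierstrass model parallel to the degree $2$ case, should produce the two relevant values of $N$, each carrying a unique subbundle, and hence identify $S_D^{-3}(\mE)$ with the asserted conjugate pair.
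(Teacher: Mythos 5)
Your reduction to sections of $C\times\P^1$ and the cross-ratio/degree count for $S_W^{-2}(\mE)$ is a legitimate (if much heavier) alternative to what the paper does, but the proposal has a genuine gap: the degree $-3$ half of the Lemma is not actually proved. You explicitly defer the count (``should produce the two relevant values of $N$''), and the determinantal strategy you sketch is not routine --- one must identify the line bundle on $\mathrm{Pic}^3(C)\cong C$ of which your $6\times 6$ determinant is a section and compute its degree, which is exactly the content you are missing. Moreover, your mechanism for producing conjugate pairs in degree $-3$ (precomposing sections with $\i_{\wi}$) only applies when $m_{t_1}=m_{t_2}$, since $\i_{\wi}$ swaps $t_1$ and $t_2$; the Lemma as stated makes no such hypothesis, so even once the count is done your argument proves a weaker statement than the one asserted (adequate for the application to $U_C$, but worth flagging).

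The paper's proof is shorter and handles both degrees uniformly, and in particular supplies the two missing ingredients. For existence and the count it uses Riemann--Roch: $\Hom(L,\OC\oplus\OC)\cong H^0(L^{-1})^{\oplus 2}$ has projective dimension $3$ (resp.\ $5$) for $\deg L=-2$ (resp.\ $-3$), so for each candidate isomorphism class one can always pass through all but one of the prescribed directions, and varying the class over $C$ and evaluating at the last point gives a $2{:}1$ map onto $\P^1$, whence generically exactly two solutions. For the conjugacy it uses a determinant trick that avoids any equivariance hypothesis: if $L_W,L_W'\in S_W^{-2}(\mE)$ are distinct, then $\Forget\circ\elem_W^+(\mE)\otimes\OC(-2\wi)$ splits as the direct sum of their images and has trivial determinant, forcing $L_W\otimes L_W'\cong\OC(-4\wi)$, i.e.\ $p'=\i_{\wi}(p)$; the same computation in degree $-3$ (using $t_1+t_2\sim 2\wi$) gives $q'=\i_{\wi}(q)$ with no assumption on $m_{t_1},m_{t_2}$. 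I would recommend replacing your cross-ratio computation and the unfinished determinant count by this argument, or at minimum importing the determinant trick to close both the degree $-3$ count and the conjugacy step.
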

\begin{proof}
  Let us prove the assertion for $S_W^{-2}(\mE)$. Consider a line bundle $L = \OC(-\tilde{p}-\wi)$, for $\tilde{p}$ in $C$. The vector space $\Hom(L, \OC \oplus \OC)$ is isomorphic to $\Hom(\OC, L^{-1} \oplus L^{-1})$. The projective dimension of this space is 3 by Riemann-Roch. This means that there is an inclusion $L \xhookrightarrow{} \OC \oplus \OC$ such that $L$ passes through three parabolic directions, say $m_0$, $m_1$ and $m_{\lambda}$. By moving $\tilde{p}$ in $C$ we move on the fiber of $\wi$. This constitutes a 2:1 covering of $\P^1$, hence there are generically two points $p$ and $p'$ in $C$ such that $S_W^{-2}(\mE) = \{ L, L' = \OC(-p'-\wi) \}$. To see that $p' = \i_{\wi}(p)$, consider the case $p \not = p'$. We have that $\Forget \circ \elem_W^+(\mE) \otimes \OC(-2 \wi) = \OC(\wi-p) \oplus \OC(\wi-p')$ has trivial determinant, implying $p' = \i_{\wi}(p)$.  
  The proof for $S_{D}^{-3}(\mE)$ is similar. 
\end{proof} 

\begin{proposition} \label{prop:Phi}
  The map $\phi_W|_{U_C}$ is given by
\begin{align*}
\P ^1_c \times \P^1_l \xrightarrow{\phi_W|_{U_C}} \PP  \quad , \quad 
 \phi_W|_{U_C} (c , l) =  \left(\phi_W|_{U_C}^1(c,l),\phi_W|_{U_C}^2(c,l)\right) 
\end{align*}
where 
\begin{equation*}
 \phi_W|_{U_C}^1(c,l) = \lambda \frac{c-1}{c-\lambda} \quad \text{and} \quad 
 \phi_W|_{U_C}^2(c,l) = \frac{\lambda l (\lambda (l - 1) + t (1-c))}{\lambda\left(t(l - c) + l(c - 1)\right) + ct(1-l)}.
\end{equation*}
\end{proposition}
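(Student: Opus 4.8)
The plan is to unwind the definition of $\phi_W$ --- and of the second factor of the coordinate map $\varepsilon$, which involves $\phi_T$ --- into a single chain of elementary transformations and twists, and then to read off the two coordinates from Lemma \ref{lem:parlinbun}. Write $\mE = (\OC \oplus \OC, \underline{\bm})$ with $\underline{\bm} = (0,1,c,\infty,l,l)$. Since $\phi_W = (\otimes \OC(-2\wi)) \circ \Forget_W \circ \elem_W^+$, the underlying bundle of $\phi_W(\mE)$ is obtained from a degree $-2$ subbundle of $\OC \oplus \OC$ through the four Weierstrass directions $\underline{\bm}|_W = (0,1,c,\infty)$. By Lemma \ref{lem:parlinbun} this subbundle is $\OC(-p-\wi)$ for a point $p$ well-defined up to $\i_{\wi}$; applying $\elem_W^+$ (which tensors it by $\OC(W) = \OC(4\wi)$, using the $2$-torsion relations of Section \ref{sec:torsion}) and twisting by $\OC(-2\wi)$ turns it into the degree $0$ subbundle $\OC(\wi - p)$. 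Hence $\Forget(\phi_W(\mE)) = \OC(p - \wi) \oplus \OC(\wi - p)$ and the first coordinate is $z = \Tu \circ \Forget (\phi_W(\mE)) = \pi(p)$ by Theorem \ref{prop:Tu}. For the second coordinate, commuting the elementary transformations and twists gives $\phi_T(\phi_W(\mE)) = \Forget_W\, \elem_D^+(\mE) \otimes \OC(-3\wi)$, where $\elem_D^+ := \elem_W^+ \circ \elem_T^+$; running the same argument with the degree $-3$ subbundle $\OC(-q-2\wi)$ of Lemma \ref{lem:parlinbun} (through all six directions $\underline{\bm}|_D$) and using $t_1 + t_2 \sim 2\wi$, its underlying bundle is $\OC(q - \wi) \oplus \OC(\wi - q)$, so $w = \Tu \circ \Forget \circ \phi_T(\phi_W(\mE)) = \pi(q)$.

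It remains to compute $\pi(p)$ and $\pi(q)$ as functions of $(c,l)$. The point $p$ is characterized geometrically: the subbundle $\OC(-p-\wi) \hookrightarrow \OC \oplus \OC$ is the same datum as a degree $2$ morphism $\sigma_1 : C \to \P^1$ with $\sigma_1(w_0)=0$, $\sigma_1(w_1)=1$, $\sigma_1(w_\lambda)=c$, $\sigma_1(\wi)=\infty$, and $p$ is the residual point of the fibre $\sigma_1^{-1}(\infty) = \wi + p$. In particular $z = \pi(p)$ depends only on $c$. Since $\sigma_1$ is recovered from $p$ by normalizing the pencil $|\OC(p+\wi)|$ at $w_0, w_1, \wi$, the assignment $c \mapsto z$ is a bijective morphism $\P^1 \to \P^1$, hence a Möbius transformation, and is therefore determined by three values. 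When $c = \lambda$ one has $\sigma_1 = x = \pi$, so $p = \wi$ and $z = \infty$; when $c = 1$ (resp.\ $c = 0$) the directions at $w_1, w_\lambda$ (resp.\ $w_0, w_\lambda$) coincide, forcing $\sigma_1^{-1}(1) = w_1 + w_\lambda$ (resp.\ $\sigma_1^{-1}(0) = w_0 + w_\lambda$), whence $p \sim w_1 + w_\lambda - \wi \sim w_0$ (resp.\ $p \sim w_0 + w_\lambda - \wi \sim w_1$) by the $2$-torsion relations, giving $z = 0$ (resp.\ $z = 1$). The unique Möbius map with $0 \mapsto 1$, $1 \mapsto 0$, $\lambda \mapsto \infty$ is $\phi_W|_{U_C}^1(c,l) = \lambda \frac{c-1}{c-\lambda}$.

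For the second coordinate I would likewise realize $\OC(-q-2\wi)$ as a degree $3$ morphism $\sigma_2 : C \to \P^1$ taking the prescribed values $0,1,c,\infty$ at $w_0,w_1,w_\lambda,\wi$ and the value $l$ at both $t_1,t_2$. Because $t_1 + t_2 \sim 2\wi$, the fibre over $l$ is $\sigma_2^{-1}(l) = t_1 + t_2 + q$ with $\OC(q + 2\wi) = \OC(\sigma_2^{-1}(l))$, so $q$ is exactly the residual point of this fibre and $w = \pi(q) = x(q)$. To make this explicit I would write $\sigma_2$ as a rational function on $C = \{ y^2 = x(x-1)(x-\lambda) \}$ using a Riemann--Roch basis of $H^0(\OC(q+2\wi))$ adapted to the pole at $\wi$ and the single variable pole, impose the six interpolation conditions --- the equality $\sigma_2(t_1) = \sigma_2(t_2)$ being the one that couples the $y$-part of $\sigma_2$ to $t$ --- and then eliminate the coefficients to solve for $x(q)$.

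Carrying out this elimination is the main obstacle. Unlike the first coordinate, $w$ genuinely depends on both $c$ and $l$ --- it is Möbius in $c$ but of degree $2$ in $l$, since the residual point of a degree $3$ fibre does not vary linearly --- so the three-point shortcut is unavailable and one must push the full degree $3$ interpolation through. The output is the stated
\[
  \phi_W|_{U_C}^2(c,l) = \frac{\lambda l (\lambda (l - 1) + t (1-c))}{\lambda\left(t(l - c) + l(c - 1)\right) + ct(1-l)},
\]
whose consistency can be checked at special configurations (for instance $c = \lambda$, where the first coordinate is $\infty$, and at the images of the $2$-torsion points). All of these computations take place over the dense open locus $U_C$ and extend to the closure by continuity.
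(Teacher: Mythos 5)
Your proposal follows the same overall strategy as the paper: identify the degree $-2$ (resp.\ degree $-3$) subbundles of $\OC \oplus \OC$ through $\bm|_W$ (resp.\ $\bm|_D$) via Lemma \ref{lem:parlinbun}, push them through the elementary transformations and twists to see that $\Forget(\phi_W(\mE))$ and $\Forget(\phi_T\phi_W(\mE))$ are $\O(p-\wi)\oplus\O(\wi-p)$ and $\O(q-\wi)\oplus\O(\wi-q)$, and then compute $\pi(p)$ and $\pi(q)$ as the residual points of the corresponding pencils. Your degree bookkeeping (including the commutation $\phi_T\circ\phi_W=\Forget_W\circ\elem_D^+\otimes\OC(-3\wi)$ and the use of $t_1+t_2\sim 2\wi$, $W\sim 4\wi$) is correct. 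Where you genuinely improve on the paper is the first coordinate: the paper pushes the $(+4)$-section down to a bidegree $(2,2)$ curve in $\PP$ and solves an $8$-equation system, whereas you observe that $c\mapsto z$ is a Möbius transformation and pin it down by the three values $c=0,1,\lambda$, each computed cleanly from the $2$-torsion relations $w_i+w_j\sim w_k+\wi$. This is shorter, conceptual, and the three values check against the stated formula. (The one point you gloss over is why $c\mapsto z$ is \emph{injective}; this needs that the two subbundles $\OC(-p-\wi)$ and $\OC(-\i_\wi(p)-\wi)$ of Lemma \ref{lem:parlinbun} determine the \emph{same} direction $c$ at $w_\lambda$, which is exactly what the lemma gives, so the gap is easily closed.)

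For the second coordinate your proposal, like the paper's proof, stops at ``impose the six interpolation conditions and eliminate'': the formula is asserted, not derived. You are honest about this, and the paper does exactly the same (``Applying these conditions to a polynomial of degree $(3,2)$ \ldots yields the desired formula''), so this is not a point where you fall short of the reference; but be aware that the hard half of the Proposition remains unverified in both accounts. One concrete remark on the consistency checks you propose at the end: the residual-divisor argument you set up predicts $w=0,\infty,\lambda$ at $l=0,\infty,c$ respectively, and all three agree with the printed formula; but the same argument applied at $l=1$ (where the directions at $w_1,t_1,t_2$ all lie on the constant subbundle $\OC\cdot(1,1)$, forcing $q=w_1$) predicts $w=1$, while the printed formula evaluates to $t/(t-1)$. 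Since the cases $l=0$ and $l=1$ are structurally identical, this tension deserves to be resolved before you sign off on the stated expression --- either the locus $l=1$ hides a degeneration of the family of degree $-3$ subbundles that your limit argument misses, or there is a normalization issue or typo in the displayed formula. Carrying out your check at this value is exactly the kind of verification that would catch it.
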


\begin{proof}
  Consider the trivial rank 2 vector bundle $\O_C \oplus \O_C$ and its projectivization $C \times \P^1$. Let $\bm = (0,1,c,l,l,\infty)$ be a parabolic configuration on $\O_C \oplus \O_C$.
  Let us write $(E, \bn) := \phi_W|_{U_C}(\OC \oplus \OC, \bm)$. We want to compute the $\varepsilon$-coordinates of $(E, \bn)$ in terms of $(c,l)$.  

  There exists $p \in C$ such that $[E] = [L\oplus L^{-1}]$ in $\BunO(C)$, with $L = \O(p - \wi)$. 
  By the properties of elementary transformations, the preimages of the line bundles $L$ and $L^{-1}$ via $\phi_W|_{U_C}$ are the following subbundles of $\OC \oplus \OC$:
\begin{align*}
  L_W = \OC(- p - \wi) \quad , \quad L'_W = \OC( - \i_{\wi}(p) - \wi)
\end{align*}
By construction, $L_W$ and $L'_W$ pass through the parabolic directions over $W$. Moreover, these are the unique subbundles of degree $-2$ passing through these parabolic directions by Lemma \ref{lem:parlinbun}. 

  Consider the $(+4)$-cross-sections $s_W$ and $s'_W$ of the trivial projective bundle $C \times \P^1$ corresponding to $L_W$ and $L'_W$ respectively. The section $s_W$ (resp. $s'_W$) intersects the constant section $y = \infty$ in two points with base coordinates $\wi$ and $p$ (resp. $\wi$ and $\i_{\wi}(p)$) in $C$. We claim that $\pi(p) =  \lambda \frac{c-1}{c-\lambda}$. This will imply
\begin{align*}
  \phi_W|_{U_C}^1(c,l) = T \circ \Forget \circ \phi_W|_{U_C}(\OC \oplus \OC, \bm) = \lambda \frac{c-1}{c-\lambda}
\end{align*}
as desired. 
  In order to prove the claim, consider the pullback of the section $s_W$ by $\pi$. It is a curve of bidegree $(2,2)$ in $\P(\OP \oplus \OP) = \PP$ having vertical tangencies over the ramification points $0, 1, \lambda, \infty \in \P^1_z$ and intersecting the horizontal $w = \infty$ in two points with base coordinates $z_0 = \infty$ and $z_1 = \pi(p)$ respectively (see Figure \ref{fig:Pzw}). This curve is defined by a bidegree (2,2) polynomial $P(z,w)$ satisfying the former conditions, which translate to 8 polynomial equations on the coefficients of $P$. Solving this polynomial system yields $z_1 = \lambda \frac{c-1}{c-\lambda}$. 
  
\begin{figure}
    \centering
    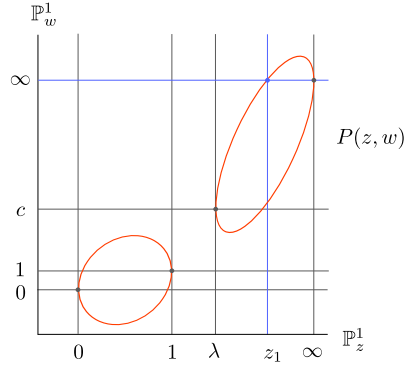
\caption{The curve of bidegree $(2,2)$ defined by $P(z,w) = 0$. Here, $z_1 = \lambda \frac{c-1}{c-\lambda}$.}
\label{fig:Pzw}
\end{figure}
  
  For the second coordinate, we have that 
\begin{align*}
  \phi_W|_{U_C}^2(c,l) = T \circ \Forget \circ \phi_T \circ \phi_W|_{U_C}(\OC \oplus \OC, \bp) = T [M \oplus M^{-1}],
\end{align*}
with $M = \O(q - \wi)$ for $q \in C$. The preimages of $M$ and $M^{-1}$ by $\phi_T \circ \phi_W|_{U_C}$ are 
\begin{align*}
  M_{D} = \OC(- \i_{\wi}(p) - 2 \wi) \quad , \quad M'_{D} = \OC(- p - 2 \wi)
\end{align*}

  These correspond to two $(+3)$-cross-sections $s_{D}$ and $s_{D}'$ of the projective bundle passing through every parabolic point over $W + T$. The section $s_{D}$ (resp. $s_{D}'$) intersects the constant section $y = l$ in three points having base coordinates $t_1$, $t_2$ and $\i_{\wi}(p)$ (resp.  $t_1$, $t_2$ and $p$). The pullback of these cross-sections by $\pi$ is a curve of bidegree $(3,2)$ on $\PP$ having vertical tangencies over the ramification points and crossing in a node over $t$. Applying these conditions to a polynomial of degree (3,2) on $z$, $w$ yields the desired formula for $\phi_W|_{U_C}^2(c,l)$.
\end{proof}

\paragraph{The map $\Phi$}
  Let $\Pb$ be the projective line with homogeneous coordinates $\bb = (b_0 : b_1 : b_2)$. The birational coordinate change $\P^1_c \times \P^1_l \to \Pb \supset U_0$ is explicited in Section 6 of ~\cite{loraysaito}. 
  Composing with the coordinates of $\phi_W|_{U_C}$ of Proposition \ref{prop:Phi} of ~\cite{loraysaito}, we find that the mapping
\begin{align*}
  \Phi|_{U_0}:  U_0 \to \PP  \cong \BunOCT   
\end{align*} 
is given in $U_0$ by the expression $$\Phi|_{U_0} (b_0:b_1:b_2) = \left( \frac{b_1 t - b_2}{b_0 t - b_1} , -b_1 \frac{b_0 \lambda - b_1 \lambda - b_1 + b_2}{b_1^2 - b_0 b_2} \right).$$
  This map extends to a 2:1 rational map $\tPhi:\Pb \to \PP$ by analytic continuation.

\subsection{The special locus of $\tPhi$ and the geometric configuration on $\Pb$}
  In this Section we will relate the set $\tOmega$ of 16 geometric objects in $\Pb$ with the undeterminacy and the ramification locus of $\tPhi$. 

\paragraph{Special locus of $\tPhi$} The undeterminacy locus of $\tPhi$ consists of the five special points $D_i \in \Pb$.  Their projective coordinates are
\begin{equation*}
D_0 = (1:0:0), \quad D_1 = (1:1:1), \quad D_\lambda = (1:\lambda:\lambda^2), \quad D_t = (1:t:t^2), \quad D_\infty=(0:0:1).
\end{equation*}
  The conic $\Pi$ passing by all the $D_i$ is given by the equation $ \Pi : b_1^2 - b_0 b_1$.
  A calculation shows that the map $\tPhi$ ramifies over the cubic $\Sigma \subset \P ^2 _{\bb}$ defined by the equation
\begin{align*}
  \Sigma:-b_{0}^{2} b_{1} {\lambda} t^{2} + b_{0}^{2} b_{2} {\lambda} t + {\left({\lambda} t^{2} + {\lambda} t + t^{2}\right)} b_{0} b_{1}^{2} - {\left(t^{2} + {\lambda}\right)} b_{1}^{3}  \\ 
- 2 \, {\left({\lambda} t + t\right)} b_{0} b_{1} b_{2} + b_{1}^{2} b_{2} {\left({\lambda} + t + 1\right)} + b_{0} b_{2}^{2} t - b_{1} b_{2}^{2} = 0.
\end{align*}
  The cubic $\Sigma$ passes through the 5 points $D_i$ and is tangent to $\Pi$ in $D_t$. This cubic is precisely the preimage of $\Gamma \subset \PP$. The curve $\Gamma$ has equation
\begin{align*}
\Gamma: t^{2} z^{2} - 2 \, t z^{2} w + t^{2} w^{2} - 2 \, t z w^{2} + z^{2} w^{2} - 2 \, {\lambda} t z - 2 \, {\lambda} t w + 2 \, {\left(2 \, {\left({\lambda} + 1\right)} t - t^{2} - {\lambda}\right)} z w + {\lambda}^{2}
\end{align*}

\paragraph{The action of $\tPhi$ on the 16 objects}
 
  There are four vertical lines $V_i$ and four horizontal lines tangent to $\Gamma$ (see Section \ref{sec:Torelli}): 
$$
  V_i = \{(z,w) \in \PP \ | \ z = i \},   \quad   H_i = \{(z,w) \in \PP \ | \ w = i \} 
$$
  with $i \in \{0, 1, \lambda, \infty \}$. 
  The map $\tPhi$ sends each curve of $\tOmega$ to one of these tangents lines to $\Gamma$ in the following way:
\begin{itemize}
  \item For $i \not = t$, the line $\Pi_{it}$ is sent to $V_i$.
  \item The conic $\Pi$ is sent to $H_{\infty}$.
  \item For $\{i,j,k\} = \{0,1,\lambda\}$, the lines $\Pi_{i\infty}$ and $\Pi_{jk}$ are sent to $H_i$.  
\end{itemize}

\paragraph{The desingularisation map $\Phi$}
  The map $\tPhi$ is birational with base points $D_i$. Hence, there exists a morphism $\Phi$ such that the following diagram commutes:
\begin{equation*}
\begin{tikzcd}[column sep = small]
 \S \cong \Bundemi \arrow{r}{\Phi} \arrow{d}{\text{blow-up}} & \BunOCT \cong \PP \\
 \Pb \arrow{ru}{\tPhi}    
\end{tikzcd}
\end{equation*}
  The vertical map is the blow-up of the 5 points $D_i$ in $\Pb$. The map $\Phi$ defined in Section \ref{sec:mapPhi} is the desingularisation map of $\tPhi$. From the previous discussion, we have the following result:
\begin{theorem}
  The map $\Phi$ is a 2:1 cover of $\PP$ ramified over the curve $\Gamma \subset \PP$.
\end{theorem}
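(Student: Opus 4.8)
The map $\Phi$ is already a morphism by the desingularisation diagram, so the plan is to upgrade the generic $2{:}1$ statement for $\tPhi$ into the assertion that $\Phi$ is a \emph{finite} double cover, and then to identify its branch curve with $\Gamma$. The generic degree causes no trouble: since $\Phi$ agrees with $\tPhi$ on the dense open set $U_0$, where the blow-up $\S \to \Pb$ is an isomorphism, and $\tPhi$ has already been shown to be a $2{:}1$ rational map, the morphism $\Phi$ has generic degree $2$. The real content is finiteness, i.e.\ showing that $\Phi$ contracts no curve of $\S$.

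I would establish finiteness by computing $\Phi^*\O_{\PP}(1,1)$ and recognising it as the ample anticanonical class of the del Pezzo surface $\S$. Write $H$ for the pullback of the line class of $\Pb$ and $E_i$ for the exceptional divisor over $D_i$, so that $-K_{\S} = 3H - \sum_i E_i$. The two factors of $\Phi$ are read off the explicit formula for $\tPhi$. The first coordinate $z = \frac{b_1 t - b_2}{b_0 t - b_1}$ is the pencil of lines through the common base point $(1:t:t^2) = D_t$, so the first projection pulls $\O_{\P^1_z}(1)$ back to $H - E_t$. The second coordinate is the pencil of conics spanned by $b_1^2 - b_0 b_2$ and $b_1(b_0\lambda - b_1\lambda - b_1 + b_2)$, whose four base points are exactly $D_0, D_1, D_\lambda, D_\infty$ (both conics vanish there, while the second does not vanish at $D_t$), so the second projection pulls $\O_{\P^1_w}(1)$ back to $2H - E_0 - E_1 - E_\lambda - E_\infty$. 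Adding the two contributions gives
\begin{align*}
  \Phi^*\O_{\PP}(1,1) = (H - E_t) + (2H - E_0 - E_1 - E_\lambda - E_\infty) = 3H - \sum_i E_i = -K_{\S}.
\end{align*}
Since $\S$ is a del Pezzo surface of degree $4$, the class $-K_{\S}$ is ample; hence $\Phi^*\O_{\PP}(1,1) = -K_{\S}$ meets every curve of $\S$ positively, no curve can be contracted, and $\Phi$ is finite. Being finite and generically $2{:}1$, it is a finite morphism of degree $2$, hence a flat double cover of the smooth surface $\PP$.

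It then remains to identify the branch divisor. A finite degree-$2$ morphism of smooth surfaces in characteristic $0$ is branched along a reduced divisor $B \subset \PP$, and the relation $K_{\S} = \Phi^*\bigl(K_{\PP} + \tfrac12 B\bigr)$, combined with $K_{\S} = \Phi^*\O_{\PP}(-1,-1)$ and $K_{\PP} = \O_{\PP}(-2,-2)$, forces $B \in |\O_{\PP}(2,2)|$, a $(2,2)$-curve. To pin down $B = \Gamma$ I would invoke the computation already carried out: $\tPhi$ ramifies over the cubic $\Sigma \subset \Pb$, and $\Sigma = \tPhi^{-1}(\Gamma)$, so the image of the ramification, i.e.\ the branch locus, is $\Gamma$. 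On $\S$ the ramification divisor is the strict transform of $\Sigma$, which equals $-K_{\S}$ and maps isomorphically onto $\Gamma$, consistent with $R = \tfrac12 \Phi^*\Gamma = -K_{\S}$.

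The main obstacle is precisely finiteness: a priori $\Phi$ might contract one of the sixteen $(-1)$-curves or an exceptional divisor, which would destroy the double-cover structure. The canonical-class identity $\Phi^*\O_{\PP}(1,1) = -K_{\S}$ removes this obstacle in one stroke, since ampleness of $-K_{\S}$ guarantees positive intersection with every curve of $\S$; the only genuinely computational input it requires is the identification of the base loci of the two pencils, which is immediate from the coordinates of $\tPhi$.
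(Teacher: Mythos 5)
Your proof is correct, but it takes a genuinely different route from the paper's. The paper argues in the opposite direction: it first constructs the abstract double cover of $\PP$ branched along $\Gamma$ as the intersection of two quadrics in $\P^4$ (the Segre quadric $f=0$ and $v^2 = g$, where $g$ cuts out $\Gamma$), recognises this smooth intersection as a del Pezzo surface of degree $4$, and then concludes that $\Phi$ must be the covering map because it is $2{:}1$ with ramification locus $\Gamma$. That last step implicitly assumes what you isolate as ``the real content,'' namely that $\Phi$ is finite and contracts no curve; the paper does not address the possibility that one of the sixteen $(-1)$-curves collapses. Your argument closes exactly this gap: the identification of the two pencils defining $\tPhi$ (lines through $D_t$, giving $H - E_t$, and conics through $D_0, D_1, D_\lambda, D_\infty$, giving $2H - E_0 - E_1 - E_\lambda - E_\infty$) yields $\Phi^*\O_{\PP}(1,1) = -K_{\S}$, and ampleness of the anticanonical class forbids contractions; the branch divisor then falls out of the ramification formula for double covers together with the already-computed fact that $\Sigma = \tPhi^{-1}(\Gamma)$ is the ramification curve. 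What the paper's approach buys instead is the explicit realisation of $\S$ as an intersection of two quadrics in $\P^4$, which is the classical model of the degree-$4$ del Pezzo surface and is used implicitly later; what yours buys is a self-contained, rigorous verification of finiteness and of the degree, at the cost of a slightly longer divisor-class computation. The only point worth making explicit in your write-up is that the two pencils are resolved by a \emph{single} blow-up of the five points (the base points of the conic pencil are transversal intersections of the two generating conics), so that the stated pullback classes are indeed base-point-free on $\S$; this is immediate but is the hypothesis under which your formula for $\Phi^*\O_{\PP}(1,1)$ is valid.
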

\begin{proof}
  Let us prove that $\S$ is the 2:1 cover of $\PP$ ramified along $\Gamma$.
  Consider the standard Segre embedding $i:\PP \to \P^3_u$ given by
$$i((z_0:z_1),(w_0:w_1)) = (z_0w_0:z_0w_1:z_1w_0:z_1w_1).$$ Its image is the quadric given by the equation $f = u_0u_3 - u_1u_2$. The curve $\Gamma$ is the restriction of a quadric in $\P^3_u$ of equation $g = 0$. The ramified cover of $\P^3_u$ along $g = 0$ is then given by the equation $v^2 = g$ in $\P^4_{u,v}$, and the covering morphism is given by the projection onto $u$.

  The 2:1 covering of $\PP$ is thus given by the quadratic forms $t^2 = g$ and $f=0$ in $\P^4_{u,v}$. Since the intersection is smooth, the covering is a del Pezzo surface of degree 4 (see for example \cite{dolgachev}, Section 8.6).
  Since the map $\Phi$ is also 2:1 with ramification locus $\Gamma$, it is the covering map.
\end{proof}

  Let $\Omega = \{\Pi_i, \Pi_{ij}, \Pi\}$. From the description of $\tPhi$ on $\tOmega$, we get that the elements of $\Omega$  are sent by $\Phi$ to vertical and horizontal tangents to $\Gamma$ as follows (see Figure \ref{fig:doublecover}):
\begin{itemize}
  \item For $i \not = t$, the exceptional divisor $\Pi_i$ and the line $\Pi_{it}$ are sent to $V_i$.
  \item The exceptional divisor $\Pi_t$ and the conic $\Pi$ are sent to $H_{\infty}$.
  \item For $\{i,j,k\} = \{0,1,\lambda\}$, the lines $\Pi_{i\infty}$ and $\Pi_{jk}$ are sent to $H_i$.  
\end{itemize}
  Each $(-1)$-curve is mapped by $\Phi$ bijectively onto its image.
\begin{figure}
    \vspace{1cm}
      \centering
          \def\svgwidth{\columnwidth}
          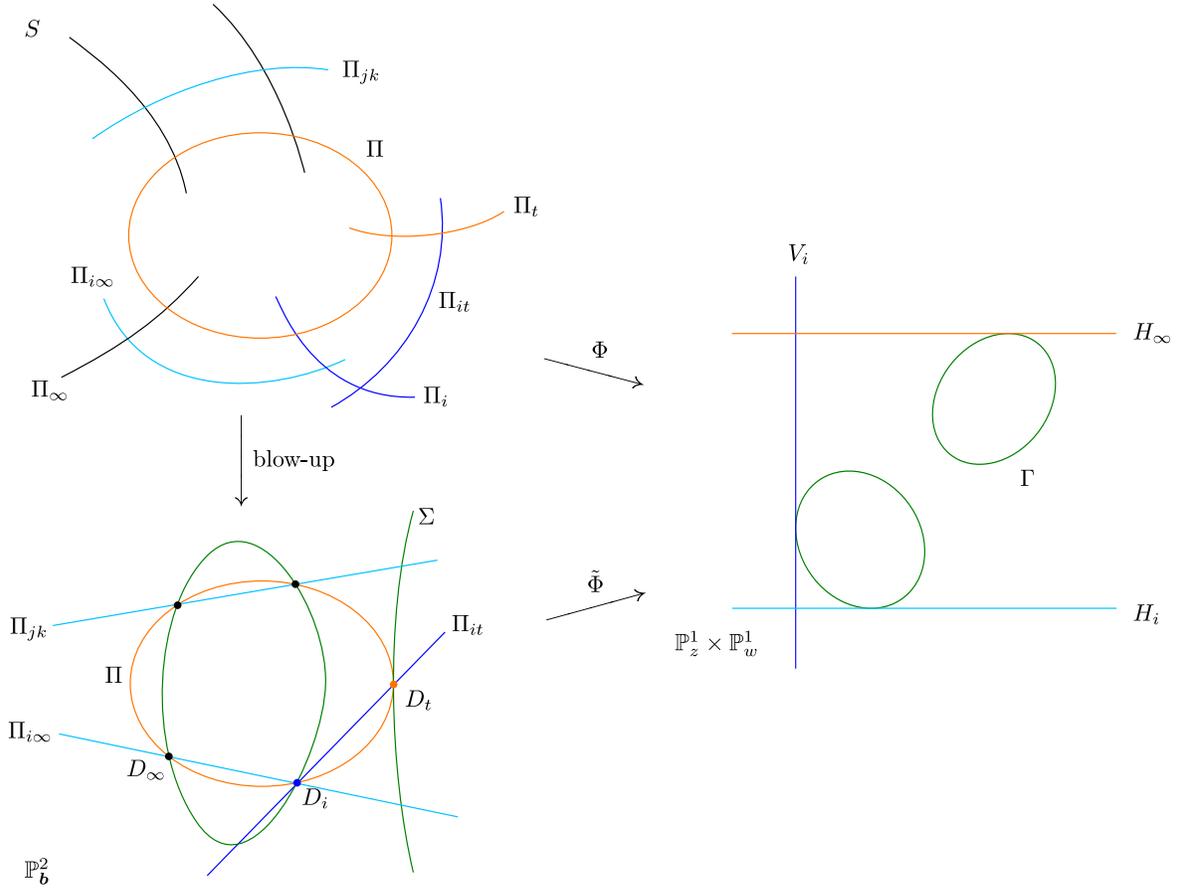
	\caption{The geometry of the map $\Phi$.}\label{fig:doublecover}
\end{figure}

\subsection{The involution $\tau$ of $\Bundemi$}
  Let $(E, \bl) \in \Bundemi$  be a parabolic bundle. Define $$\tau(E, \bl) := \elem^+_{\uW} (E,\bl) \otimes \OP(-2).$$ 
  The bundle $\tau(E,\bl)$ is again an element of $\Bundemi$, therefore the map $\tau$ is an automorphism of this moduli space. 
\begin{proposition}
  The mapping $\tau$ is the involution induced by $\Phi$. More precisely, $\tau$ satisfies $\Phi \circ \tau = \Phi$ and it is not the identity.
\end{proposition}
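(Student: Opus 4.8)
The plan is to establish the three assertions separately: that $\tau$ is a well-defined automorphism of $\Bundemi$, that it squares to the identity, and that it satisfies $\Phi\circ\tau=\Phi$ without being the identity.

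First I check that $\tau$ preserves the moduli problem. The transformation $\elem^+_{\uW}$ raises the determinant by $\OP(\uW)=\OP(4)$ and replaces each weight $\tfrac12$ over a point of $\uW$ by $1-\tfrac12=\tfrac12$, while the free weight $\mu$ at $t$ is untouched; the twist by $\OP(-2)$ then lowers the determinant by $\OP(4)$ and preserves weights and directions. Hence $\tau$ sends $\Bundemi$ to itself with the same weights $\ubmu$, and it preserves semistability. For involutivity, I use (from Section \ref{subsec:elem}) that at a single point $(\elem^+)^2$ is the twist by that point, that elementary transformations at distinct points commute, and that they commute with a global twist. These give
\begin{align*}
\tau^2(E,\bl)=\elem^+_{\uW}\bigl(\elem^+_{\uW}(E,\bl)\bigr)\otimes\OP(-4)=(E,\bl)\otimes\OP(\uW)\otimes\OP(-4)=(E,\bl),
\end{align*}
since $\OP(\uW)=\OP(4)$. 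Thus $\tau^2=\mathrm{id}$.

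The heart of the proof is $\Phi\circ\tau=\Phi$, which I reduce to a local statement at the branch points of $\pi$. Writing $\Phi=(\otimes M)\circ\Forget_W\circ\elem^+_W\circ\pi^*$ with $M=\OC(-2\wi)$, I first note $\pi^*(\elem^+_{\uW}(E,\bl)\otimes\OP(-2))=\pi^*(\elem^+_{\uW}(E,\bl))\otimes\OC(-4\wi)$, using $\pi^*\OP(1)\cong\OC(2\wi)$. The key claim is that at each Weierstrass point $w_k$ the two parabolic bundles
\begin{align*}
\elem^+_{w_k}\bigl(\pi^*\elem^+_{k}(E,\bl)\bigr)\qquad\text{and}\qquad\bigl(\elem^+_{w_k}\pi^*(E,\bl)\bigr)\otimes\OC(w_k)
\end{align*}
have the same underlying vector bundle, differing only in the parabolic direction at $w_k$. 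This is a purely local computation in a coordinate $y$ with $x=y^2$: the downstairs modification $\elem^+_{k}$ pulls back to a length-two (non-reduced) upper modification at $w_k$, which after one upstairs $\elem^+_{w_k}$ becomes the twist by $\OC(w_k)$ up to the choice of direction at $w_k$. Since $\elem^+_W$ and $\Forget_W$ touch only the points of $W$ and leave the direction over $T$ intact, applying $\Forget_W$ erases the discrepant $W$-directions; collecting the four points gives
\begin{align*}
\Forget_W\bigl(\elem^+_W\,\pi^*\elem^+_{\uW}(E,\bl)\bigr)=\Forget_W\bigl(\elem^+_W\,\pi^*(E,\bl)\bigr)\otimes\OC(W)
\end{align*}
as parabolic bundles over $(C,T)$. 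Finally $\OC(W)\cong\OC(4\wi)$ — because $w_0+w_1+w_\lambda\sim 3\wi$, which follows from $L_0L_1L_\lambda=\O$ in Section \ref{sec:torsion} — so the twist $\OC(W)$ cancels exactly the twist $\OC(-4\wi)$ coming from $\pi^*\OP(-2)$. Tensoring by $M$ then yields $\Phi(\tau(E,\bl))=\Phi(E,\bl)$. Since $\Phi$ and $\Phi\circ\tau$ are morphisms agreeing on the dense open set $U_0$, they agree everywhere.

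It remains to see $\tau\neq\mathrm{id}$. Conceptually, $\tau$ realizes the hyperelliptic exchange $p\leftrightarrow\i_{\wi}(p)$ in the description of the fibres of $\Phi$ given by Lemma \ref{lem:parlinbun} and Proposition \ref{prop:2sec}, which is exactly why the two sheets of the cover are swapped. Concretely it suffices to exhibit one bundle moved by $\tau$: a single $\elem^+_{\uW}$ (as opposed to its square) genuinely alters the parabolic configuration of a generic bundle and is not a global twist, so $\tau$ is not the identity; equivalently, $\tau$ interchanges the two distinct $(-1)$-curves $\Pi_i$ and $\Pi_{it}$ that share the image $V_i$ under $\Phi$. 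Combined with $\Phi\circ\tau=\Phi$, this identifies $\tau$ as the nontrivial deck involution of the degree-two cover $\Phi$. The main obstacle is the local identity at the branch points: it is precisely where the ramification index $2$ of $\pi$ intervenes, turning one elementary transformation downstairs into a thickened modification upstairs that collapses to a twist once the parabolic direction over $W$ is forgotten.
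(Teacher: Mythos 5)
Your proposal is correct and follows essentially the same route as the paper: the decisive step in both is the local computation at the ramification points of $\pi$, showing that the pullback of one downstairs elementary transformation is absorbed by a single upstairs elementary transformation up to a twist and a parabolic direction over $W$ that $\Forget_W$ discards. You are more explicit than the paper about the determinant bookkeeping ($\OC(W)\cong\OC(4\wi)$ cancelling $\pi^*\OP(-2)$) and about why $\tau\neq\mathrm{id}$, both of which the paper leaves implicit, but this is elaboration rather than a different argument.
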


\begin{proof}
  Locally, $\PE$ is a product $U_x \times \P^1_y$. Consider the parabolic point $p = (0, 0) \in U_x \times \P^1_y$. The pullback map $\pi^*$ is given by $$(x,y) \gets (z,y),$$ where $z$ is a local coordinate in $C$ with $z^2 = x$. An elementary transformation centered in $(z = 0, y = 0)$ corresponds to a coordinate transformation $(z, y')$ with $y' = y/z$.

  Let us now apply an elementary transformation to $(E,p)$. We obtain the coordinate transformation $(x, y'' = y/x)$. The pullback gives $$(x, y'') \gets (z, \tilde{y} = y/z^2).$$ The elementary transformation of $\pi^*(E,p)$ is locally given by $(z, \tilde{y} z = y/z = y')$ (notice that the parabolic point is now at $y'' = \infty$). 

  We have shown that the parabolic bundles $(E, p)$ and $\elem_p(E,p)$ have the same image by pullback followed by elementary transformation on a point. Since the arguments are local, the result holds also for $\phi_T$.
\end{proof}

  The map $\tau$ is a degree 3 birational transformation of $\Pb$ (see Table \ref{table:involutions}). The vanishing locus of its Jacobian determinant is exactly the union of the geometric objects in $\tOmega$.

\paragraph{The de Jonquières automorphism}
  Let $P$ be a point in $\Pb$. The line $\Pi_{P,D_t}$ passing through $P$ and $D_t$ and the conic $\Pi_{P}$ passing through the five points $D_i$ for $i = 0,1,\lambda,\infty$ and $P$ intersect generically in two points, which are $P$ and $\tau(P)$. 

  The involution $\tau$ is given by the intersection of two foliations on $\Pb$: the pencil of lines passing through $D_t$ and the pencil of conics passing through $D_i$, for $i \not = t$. The ramification locus $\Sigma \subset \Pb$ of $\tPhi$ is the tangency locus of these two foliations. The involution $\tau$ leaves invariant $\Sigma$ pointwise. 

  Each point in $\Sigma$ represents a parabolic bundle $(\OC \oplus \OC, \bp)$ fixed by $\tau$. Following Lemma \ref{lem:parlinbun}, this condition means  exactly that $\{L_W, L'_W\} = \{M_W, M'_W\}$. This is equivalent to $L_W$ passing through at least one of the parabolic directions over $t_i$. Thus, $\Sigma$ is exactly the locus of semistable bundles in $\Bundemi$. Since $\bmu$-stability is preserved by elementary transformations, we obtain again that the image $\Phi(\Sigma)$ is the curve $\Gamma$ of Section \ref{sec:moduli}.

  The action of $\tau$ on $\Omega$ is summarized in Table \ref{table:actionomega}.

\section{The del Pezzo geometry and the group of automorphisms} \label{sec:delpezzogeom}
  
  Let $\AutS$ be the group of automorphisms of the del Pezzo surface $\S$. We have the following Theorem:
\begin{theorem}
  The group $\AutS$ is isomorphic to $(\mathbb{Z}/2\mathbb{Z})^4$ and acts transitively and freely on $\Omega$.
\end{theorem}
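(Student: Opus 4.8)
The plan is to treat this as a statement about the classical geometry of degree-$4$ del Pezzo surfaces, using the complete-intersection-of-quadrics model produced in the previous theorem, and only at the end to match the resulting group with its action on the sixteen $(-1)$-curves $\Omega$. By that theorem, $\S$ is embedded by $-K_{\S}$ in $\P^4$ as the smooth complete intersection of the two quadrics $\{f = u_0u_3 - u_1u_2 = 0\}$ and $\{v^2 = g\}$. First I would simultaneously diagonalize the pencil they span: since $\S$ is smooth, the discriminant of the pencil is a binary quintic with five distinct roots, so the pencil has exactly five singular members, all of corank one, with five distinct parameters $a_0,\dots,a_4 \in \P^1$. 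After a projective change of coordinates on $\P^4$ the surface becomes
\begin{equation*}
  \S = \Big\{ \textstyle\sum_{i=0}^4 x_i^2 = 0, \ \sum_{i=0}^4 a_i x_i^2 = 0 \Big\},
\end{equation*}
the vertices $e_i$ of the five cones being the coordinate points, and the parameters $a_i$ coinciding, up to $\mathrm{PGL}(2)$, with the five marked points $\{0,1,\lambda,\infty,t\}$ carrying the blown-up $D_i$.

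Second I would pin down the group. The diagonal sign changes $x_i \mapsto \epsilon_i x_i$, $\epsilon_i \in \{\pm 1\}$, preserve both quadrics and hence $\S$; modulo the global involution $x_i \mapsto -x_i$ (trivial in $\P^4$) they give an injection $(\mathbb{Z}/2\mathbb{Z})^4 \hookrightarrow \AutS$. For the reverse inclusion I would use that every automorphism of $\S$ is induced by an element of $\mathrm{PGL}(5)$ preserving the anticanonical pencil of quadrics (this pencil is the canonically defined kernel of $\mathrm{Sym}^2 H^0(-K_{\S}) \to H^0(-2K_{\S})$). Such an element permutes the five singular members and so acts projectively on the five parameters $a_i$. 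Since the points $\{0,1,\lambda,\infty,t\}$ carry no nontrivial projective symmetry for generic $(\lambda,t)$ — none of the $\beta_k$ preserves this five-point set, as $\beta_k$ moves $t$ off $\{0,1,\lambda,\infty\}$ in general — this permutation is trivial; the automorphism then fixes every vertex $e_i$, is therefore diagonal, and a diagonal transformation preserving $\sum x_i^2$ is exactly a sign change. Hence $\AutS = (\mathbb{Z}/2\mathbb{Z})^4$. (Alternatively one cites the classification of automorphism groups of degree-$4$ del Pezzo surfaces in \cite{dolgachev}.)

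Finally I would analyze the action on $\Omega$. The set $\Omega = \{\Pi_i, \Pi_{ij}, \Pi\}$ is precisely the set of sixteen $(-1)$-curves of $\S$, so a group of order $16$ acts on a set of $16$ elements; it is enough to show the action is free, since then every orbit has size $16$ and there is a single orbit, giving simple transitivity. A nontrivial sign change $\sigma_\epsilon$ acts as a projective involution whose fixed points form the two coordinate linear subspaces cut out by its $\pm 1$-eigenspaces, and I would verify on the diagonal model that, for generic $a_i$, none of the sixteen lines of $\S$ is invariant under such a $\sigma_\epsilon$. This yields freeness and hence the simply transitive action. As a cross-check, the explicit images of the elements of $\Omega$ under the generating involutions $\sigma_k$, $\psi_T$ and $\tau$ recorded in the action tables display the sixteen lines as one free orbit.

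The main obstacle I anticipate is the upper bound $\AutS \subseteq (\mathbb{Z}/2\mathbb{Z})^4$: one must genuinely exclude the ``permutation'' automorphisms arising from symmetries of the five points $\{0,1,\lambda,\infty,t\}$, which is exactly where a genericity hypothesis on $(\lambda,t)$ enters (or an explicit check that no $\beta_k$, nor any composite, preserves this set while fixing the pencil). By comparison, the freeness of the sign-group action on the sixteen lines is a routine verification once the diagonal model is fixed.
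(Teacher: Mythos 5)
Your proposal is correct in substance but takes a genuinely different route from the paper: the paper's entire proof of this theorem is a citation to Chapter 8 of Dolgachev, whereas you reconstruct the classical argument from the complete-intersection model. Your diagonalization of the pencil of quadrics to $\sum x_i^2 = \sum a_i x_i^2 = 0$, the identification of the sign-change group $(\mathbb{Z}/2\mathbb{Z})^4$, the reduction of the upper bound to the projective symmetries of the five discriminant parameters, and the freeness of the action on the sixteen $(-1)$-curves are exactly the ingredients behind the cited classification, so your write-up makes explicit what the paper leaves as a black box. What the citation buys the paper is brevity and the suppression of a hypothesis; what your argument buys is transparency about where that hypothesis lives: as you correctly observe, the statement $\AutS \cong (\mathbb{Z}/2\mathbb{Z})^4$ is only true when the five points carry no projective symmetry, i.e.\ for generic $(\lambda,t)$ (Dolgachev's classification allows a nontrivial semidirect factor for special surfaces), and the paper never states this restriction.

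One step of your upper-bound argument is incomplete as written. To rule out the permutation part you only check that no $\beta_k$ preserves $\{0,1,\lambda,\infty,t\}$; but the $\beta_k$ are precisely the elements of $\text{PGL}(2)$ stabilizing the subset $\{0,1,\lambda,\infty\}$, and a projective symmetry of the five-point set need not stabilize that subset --- it could exchange $t$ with one of the other four points. You must either invoke directly that a generic five-point subset of $\P^1$ has trivial $\text{PGL}(2)$-stabilizer, or check all transpositions moving $t$, not just the $\beta_k$. This does not affect the conclusion for generic $(\lambda,t)$, and your fallback of citing the classification coincides with the paper's actual proof. The freeness verification you defer is also confirmed, independently of genericity, by the paper's Table \ref{table:actionomega}, where each listed involution visibly moves all sixteen elements of $\Omega$.
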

\begin{proof}
  See, for example, Chapter 8 of ~\cite{dolgachev}.
\end{proof}

  In particular, every automorphism of $S$ is an involution and it is uniquely defined by the image of a $(-1)$ curve in $\Omega$. This set is invariant under $\AutS$. 

\subsection{The group of automorphisms $\Aut(\PP, \Gamma)$}

  In Section \ref{sec:aut} we defined five automorphisms of the moduli space $\BunOCT \cong \PP$: the four twists $\tsigmak: = \otimes L_k$, for $k \in \Delta = \{0,1,\lambda,\infty\}$, and the map $\phi_T$. These maps are involutions preserving the bidegree $(2,2)$ curve $\Gamma$.
  Recall that $\tilde{\sigma_{\lambda}} = \tilde{\sigma_{0}} \circ \tilde{\sigma_1}$.
  
  \begin{table}[htp]
\begin{center}
\begin{tabular}{|c|l|}
\hline
 &
{\small $\tau_0 = (( \lambda t) b_{0} b_{1} + (t^{2} -  t \lambda -  t) b_{0} b_{2} + (- t^{2} -  \lambda) b_{1}^{2} + (t + \lambda + 1) b_{1} b_{2} - b_{2}^{2})(b_0 t - b_1)$ }
\\ $\tau$ &
{\small$\tau_1 = {\left(b_{0} {\lambda} - b_{1} {\lambda} - b_{1} + b_{2}\right)} {\left(b_{0} t - b_{1}\right)} {\left(b_{1} t - b_{2}\right)} t$}
\\ & 
{\small $\tau_2 =\left(b_{0}^{2} + (- t^{2} \lambda -  t^{2} -  t \lambda) b_{0} b_{1} + (t \lambda + t -  \lambda) b_{0} b_{2} + (t^{2} + \lambda) b_{1}^{2} + (- t) b_{1} b_{2}\right) {\left(b_{1} t - b_{2}\right)} t$}
\\ \hline
 & 
$\sigma_{00} =  \left(\lambda b_{0} + (-1 - \lambda + t) b_{1} \right)   (t b_{1} -  b_{2}) $ 
\\ $\sigma_0$ & 
$\sigma_{01} = \lambda  (t b_{0} -  b_{1})  (t b_{1} -  b_{2})  $
\\ & 
$\sigma_{02} = \lambda  (t b_{0} -  b_{1})  \left((- \lambda + t + \lambda t ) b_{1} -  t b_{2} \right) $
\\ \hline

 & 
$\sigma_{10} = (t b_0 - (1+t) b_1 + b_2) \left( ( \lambda - t - 1) b_1 +t b_0 \right)$
\\ $\sigma_1$ & 
$\sigma_{11} = t  (\lambda b_{0} -  b_{1})  \left(t b_{0}+ (-1 -t) b_{1}  + b_{2} \right) $
\\ & 
$\sigma_{12} = t  \left( \lambda^2 t b_0^2 + (\lambda^2+t)b_1^2+(-\lambda^2-\lambda t -\lambda^2t)b_0b_1 + (\lambda-t+\lambda t)b_0 b_2 - \lambda b_1 b_2 \right) $
\\ \hline
 & 
$\sigma_{\lambda 0} = \left(\lambda t b_0 + (1-\lambda-t)b_1 \right)\left(\lambda t b_0 -(\lambda + t)b_1 + b_2\right)$ 
\\ $\sigma_{\lambda}$ &
$\sigma_{\lambda 1} = \lambda t (b_0-b_1) \left(\lambda t b_0 - (\lambda+t)b_1+b_2\right) $
\\ & 
$\sigma_{\lambda 2} = \lambda t \left(
\lambda t b_0^2 + (1 + \lambda t) b_1^2 - (\lambda + t + \lambda t)b_0 b_1 + (\lambda + t - \lambda t)b_0 b_2 - b_1 b_2 
\right)$
\\ \hline
 & 
$\psi_{T0} = (-\lambda - t^2)b_1^2 - b_2^2 + \lambda t b_0 b_1 + (t^2 - t - \lambda t) b_0 b_2 + (\lambda + t + 1) b_1 b_2$ 
\\ $\psi_{T} $ &
$\psi_{T1}  = t \left( \lambda b_0 - (1 + \lambda) b_1 + b_2 \right) (t b_1 - b_2)$
\\ & 
$\psi_{T2}  = t \left( ( t - \lambda + \lambda t) b_1 - t b_2 \right) (\lambda b_0 - (1 + \lambda) b_1 + b_2)$
\\ \hline

\end{tabular}
\end{center}
\caption{The involutions $\tau$, $\sigma_k$ and $\psi_T$ in the projective coordinates $\bb$. Here $\tau =(\tau_0:\tau_1:\tau_2)$,  $\sigma_k = (\sigma_{k0}:\sigma_{k1}:\sigma_{k2})$ and $\psi_T = (\psi_{T0}:\psi_{T1}:\psi_{T2}).$}
\label{table:involutions}
\end{table}
  These involutions act transitively on the set of vertical and horizontal tangents to $\Gamma$: the involution $\phi_T$ exchanges verticals and horizontals, and the twists $\tsigmak$ act separately on horizontal and vertical tangents as double transpositions. More precisely, for $k \in \{0,1,\lambda\}$, 
\begin{itemize}
  \item $\tsigmak (H_{\infty}) = H_k$ and $\tsigmak (V_{\infty}) = V_k$.
  \item $\tsigmak (H_i) = H_j$ and $\tsigmak (V_i) = V_j$ for $\{i,j,k\} = \{0,1,\lambda\}$.
  \item $\phi_T(H_i) = V_i$.
\end{itemize}

  Let $\Aut(\PP, \Gamma)$ be the group of automorphisms of $\PP$ leaving invariant $\Gamma$. Then, we have:

\begin{proposition} \label{prop:autPP}
  The group $\Aut(\PP, \Gamma)$ is isomorphic to $(\mathbb{Z}/2\mathbb{Z})^3$, and it is generated by the involutions $\tsigmak$ and $\phi_T$. 
\end{proposition}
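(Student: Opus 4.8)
The plan is to prove the statement in two movements: first exhibit the group $(\mathbb{Z}/2\mathbb{Z})^3$ spanned by the listed involutions, and then show that $\Aut(\PP,\Gamma)$ contains nothing else. For the first movement, recall from Section~\ref{sec:aut} that each twist $\tsigmak$ and the map $\phi_T$ preserve $\Gamma$, and that in the coordinate system $\varepsilon$ they read $\tsigmak(z,w)=(\beta_k(z),\beta_k(w))$ and $\phi_T(z,w)=(w,z)$. These are involutions, and a one-line check in coordinates shows that any two of them commute (the $\tsigmak$ act diagonally, so they commute with the swap $\phi_T$). Since $\tilde{\sigma_\lambda}=\tilde{\sigma_0}\circ\tilde{\sigma_1}$ and $\phi_T$ is visibly not diagonal, one gets $\langle \tilde{\sigma_0},\tilde{\sigma_1}\rangle\cong(\mathbb{Z}/2\mathbb{Z})^2$ and hence $\langle \tilde{\sigma_0},\tilde{\sigma_1},\phi_T\rangle\cong(\mathbb{Z}/2\mathbb{Z})^3$, a group of order $8$.

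For the upper bound I would start from $\Aut(\PPsin)=(\mathrm{PGL}(2)\times\mathrm{PGL}(2))\rtimes\langle s\rangle$, with $s$ the factor-swap. As the swap $\phi_T$ already belongs to $\Aut(\PP,\Gamma)$, the subgroup of automorphisms preserving each ruling has index $2$; thus it is enough to prove that this ruling-preserving subgroup equals $\langle\tilde{\sigma_0},\tilde{\sigma_1}\rangle\cong(\mathbb{Z}/2\mathbb{Z})^2$, for then $|\Aut(\PP,\Gamma)|=8$ and the inclusion of the previous paragraph becomes an equality. Write a ruling-preserving automorphism as a pair $(g,h)\in\mathrm{PGL}(2)\times\mathrm{PGL}(2)$. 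It sends vertical lines to vertical lines while preserving tangency with $\Gamma$; by the tangency Lemma of Section~\ref{sec:Torelli} the vertical tangents are exactly the $V_k$ with $k\in\Delta=\{0,1,\lambda,\infty\}$, so $g$ permutes $\Delta$, and likewise $h$. For $\lambda$ outside the harmonic and equianharmonic loci this stabiliser is the Klein four-group $\langle\beta_0,\beta_1\rangle$, so $g,h\in\{\beta_k\}_{k\in\Delta}$ and only $16$ candidate pairs remain.

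The crux, and the step I expect to be the real obstacle, is to rule out the twelve off-diagonal pairs, i.e. to force $g=h$. For this I would invoke Proposition~\ref{prop:tang}, which locates the tangency point of $V_k$ on $\Gamma$ as $(k,\beta_k(t))$. Since $(g,h)$ sends $V_k$ to $V_{g(k)}$ and carries the tangency point of $V_k$ to that of $V_{g(k)}$, comparing second coordinates gives $h(\beta_k(t))=\beta_{g(k)}(t)$ for every $k\in\Delta$. Now the assignment $k\mapsto\beta_k$ identifies $\Delta$ with the group $\langle\beta_0,\beta_1\rangle$ (so that $\beta_a$ restricts on $\Delta$ to translation by $a$, i.e. $\beta_a(k)=a\ast k$); writing $g=\beta_a$ and $h=\beta_b$, the relation reads $\beta_{b\ast k}(t)=\beta_{a\ast k}(t)$ for all $k$, and the choice $k=\infty$ yields $\beta_b(t)=\beta_a(t)$, which for generic $t$ forces $a=b$. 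Hence $(g,h)=(\beta_a,\beta_a)$ is diagonal, the ruling-preserving subgroup is $\{(\beta_k,\beta_k)\}_{k\in\Delta}=\langle\tilde{\sigma_0},\tilde{\sigma_1}\rangle$, and together with $\phi_T$ we conclude $\Aut(\PP,\Gamma)=\langle\tilde{\sigma_0},\tilde{\sigma_1},\phi_T\rangle\cong(\mathbb{Z}/2\mathbb{Z})^3$.

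Two remarks on robustness. First, for the finitely many special values of $\lambda$ the stabiliser of $\Delta$ is larger, but the tangency relation $h(\beta_k(t))=\beta_{g(k)}(t)$ still constrains $(g,h)$, and one checks that the extra symmetries of $\Delta$ cannot preserve the tangency data $\{(k,\beta_k(t))\}$ for generic $t$, so the conclusion is unchanged. Second, a more intrinsic reformulation of the upper bound is available: restricting $(g,h)$ to the genus-one curve $\Gamma$, it must normalise the deck involutions $\iota_z,\iota_w$ of the two degree-two projections $\Gamma\to\P^1_z$ and $\Gamma\to\P^1_w$; translations of $\Gamma$ by $2$-torsion normalise both and recover $\langle\tilde{\sigma_0},\tilde{\sigma_1}\rangle$, while $\phi_T$ exchanges $\iota_z$ and $\iota_w$, and verifying that these exhaust the relevant (anti)normaliser gives $(\mathbb{Z}/2\mathbb{Z})^3$ once more.
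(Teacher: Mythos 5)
Your argument follows the same route as the paper's proof --- reduce an automorphism preserving $\Gamma$ to its action on the eight vertical and horizontal tangents --- but you actually carry out the one step the paper merely asserts, namely that the ruling-preserving part of $\Aut(\PP,\Gamma)$ consists only of the diagonal pairs $(\beta_k,\beta_k)$. The paper's proof states this without justification; your mechanism for it (the tangency points $(k,\beta_k(t))$ of Proposition~\ref{prop:tang}, giving $\beta_{b\ast k}(t)=\beta_{a\ast k}(t)$ for all $k\in\Delta$) is the right one, and the lower-bound half and the index-$2$ reduction via the factor swap are both fine. In that sense your write-up is a correct and more complete version of the intended argument.

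The one point that needs to be confronted rather than waved at is the genericity caveat in your crux step, because the exceptional locus is real and the Proposition actually fails on it. Your relation at $k=\infty$ is $\beta_a(t)=\beta_b(t)$, which fails to force $a=b$ precisely when $t$ is a fixed point of some nontrivial $\beta_c$; for $c=0$ this is $t^2=\lambda$, a perfectly admissible choice of $t\notin\Delta$. In that case one checks on the explicit bidegree-$(2,2)$ equation of $\Gamma$ that the substitution $(z,w)\mapsto(\lambda/z,\,w)=(\beta_0(z),w)$ rescales the defining form by $t^2$, so $(\beta_0,\mathrm{id})$ genuinely preserves $\Gamma$ (all the tangency data $(k,\beta_k(t))$ and $(\beta_k(t),k)$ are also respected, as your own formulas confirm). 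This element is ruling-preserving but not diagonal, so $\Aut(\PP,\Gamma)$ is strictly larger than $(\mathbb{Z}/2\mathbb{Z})^3$ for such $(\lambda,t)$. Your closing ``robustness'' remark addresses only special values of $\lambda$ (a larger stabiliser of $\Delta$), not special values of $t$, so this case is left open in your proposal --- and it cannot be closed: the statement requires a genericity hypothesis on $t$ that neither your proof nor the paper's makes explicit. Apart from flagging that this is a defect of the statement rather than of your method, your proof is sound.
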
  
\begin{proof}
  The group $\Aut(\PP)$ is isomorphic to $\text{PGL}(2) \times \text{PGL}(2)$. Let $\gamma$ be an automorphism preserving $\Gamma$. Then, $\gamma$ must preserve the set $T$ of vertical and horizontal tangents to $\Gamma$. Also, $\gamma$ is completely determined by its action on this set. The subgroup of $\Aut(\PP, \Gamma)$ of elements preserving verticals is generated by $\{\tsigmak\}$, and the group of automorphisms preserving $T$ is generated by $\{\tsigmak, \phi_T\}$. 
\end{proof}
  Each involution $\tsigmak$ lifts to two automorphisms $\sigma_k$ and $\sigma_k \circ \tau$ of $S$. These involutions act transitively on the set $\Omega$. Choose $\sigma_k$ to be the lift of $\tsigmak$ such that $\sigma_k(\Pi_{t\infty}) = \Pi_k$ (and thus $\sigma_k \circ \tau (\Pi_{t\infty})=\Pi_{tk}$, and $\sigma_{\infty} = \tau$). Finally, let $\psi_T$ be the lift of $\phi_T$ such that $\psi_T (\Pi_{t\infty}) = C$.  The action of these involutions on the set $\Omega$ is summarized in Table \ref{table:actionomega}.

\begin{table}[htp]
\begin{center}
\begin{tabular}{|c|llll|}
\hline
&
$ \Pi_t $ & $\longleftrightarrow $ & $ \Pi $ 
&
\\ $\tau$ & 
$\Pi_i $ & $\longleftrightarrow $ & $ \Pi_{it}$ & $\text{for } \{i \not = t \}$
\\ & 
$\Pi_{i\infty} $ & $\longleftrightarrow $ & $ \Pi_{jk}$ & $\text{for } \{i,j,k\} = \{0,1,\lambda\}$
\\ \hline
 & 
$\Pi_k $ & $\longleftrightarrow $ & $ \Pi_{t \infty}$ &
\\ &
$\Pi_{kt} $ & $\longleftrightarrow $ & $ \Pi_{\infty}$
&
\\   $\sigma_k$ &
$\Pi_{i\infty} $ & $\longleftrightarrow $ & $ \Pi_{jt}$ & $\text{for } \{i,j,k\} = \{0,1,\lambda\}$ 
\\ 
$(k \not = \infty)$ & $\Pi_{ik} $ & $\longleftrightarrow $ & $ \Pi_{i} $ &
\\ &  
$\Pi $ & $\longleftrightarrow $ & $ \Pi_{ij}$ &
\\ \hline
&
$ \Pi_{t\infty} $ & $\longleftrightarrow $ & $ \Pi $ 
&
\\ $\psi_T$ & 
$\Pi_{t} $ & $\longleftrightarrow $ & $ \Pi_{\infty} $ & 
\\ & 
$\Pi_{ij} $ & $\longleftrightarrow $ & $ \Pi_{k}$ & $\text{for } \{i,j,k\} = \{0,1,\lambda\}$
\\ \hline
\end{tabular}
\end{center}
\caption{The action of the involutions $\tau$, $\sigma_k$ and $\psi_T$ on the set $\Omega$.}
\label{table:actionomega}
\end{table}

  By Proposition \ref{prop:autPP}, the subgroup of $\AutS$ generated by $\sigma_0$, $\sigma_1$ and $\psi_T$ is isomorphic to $\Aut(\PP, \Gamma)$. Since $\tau$ is not an automorphism of $\Aut(\PP, \Gamma)$, we directly obtain by cardinality of $\AutS$ that
\begin{align*}
  \AutS = \langle \sigma_0 , \sigma_1, \psi_T, \tau \rangle
\end{align*}
  This completes the proof of Theorem \ref{th:autS}.

\bibliography{mibib}
\bibliographystyle{plain}
  
\end{document}